\newfont{\cmbsy}{cmbsy10}
\newfont{\cmmib}{cmmib10}
\newcommand{\Orden}{\mathop{\hbox{\cmbsy O}}\nolimits}
\DeclareMathOperator{\ph}{ph}
\DeclareMathOperator{\Arg}{Arg}
\DeclareMathOperator{\RH}{RH}
\def\Z{\mathbf{Z}}
\def\N{\mathbf{N}}
\def\R{\mathbf{R}}
\def\C{\mathbf{C}}
\def\Re{\mathrm{Re\,}}
\DeclareMathOperator*{\card}{card} 
\DeclareMathOperator{\angles}{\varphi}
\DeclareMathOperator*{\sgn}{sgn}
\def\Re{\operatorname{Re}}
\def\Im{\operatorname{Im}}
\def\medio{{\textstyle \frac{1}{2}}\,}
\def\cuarto{{\textstyle \frac{1}{4}}\,}
\def\mediot{{\textstyle \frac{t}{2}}\,}
\def\mediopi{{\textstyle \frac{\pi}{2}}}
\def\fracpi{{\textstyle \frac{1}{\pi}}}
\def\mediopit{{\textstyle \frac{\pi t}{2}}}
\def\terpi{{\textstyle \frac{3\pi}{2}}}
\def\SP{P}
\def\gg{\xi}
\newtheorem{theorem}{Theorem}
\newtheorem{definition}[theorem]{Definition}
\newtheorem{lemma}[theorem]{Lemma}
\newtheorem{proposition}[theorem]{Proposition}
\newtheorem{corollary}[theorem]{Corollary}
\newtheorem*{hypothesis}{Hypothesis \SP}
\theoremstyle{remark}
\newtheorem{remark}[theorem]{Remark}
\newtheorem{example}{Example}
\begin{document}

\title[On location of non-trivial zeros]
{On  the exact location of the non-trivial zeros of Riemann's zeta function}
\author[Arias de Reyna]{J. Arias de Reyna}
\address{Facultad de Matemáticas \\
Univ.~de Sevilla \\
Apdo.~1160
 \\
41080-Sevilla \\
Spain} 
\thanks{First author supported by  MINECO grant MTM2012-30748.}
\email{arias@us.es}

\author[van de Lune]{J. van de Lune}
\address{Langebuorren 49 \\
9074 CH Hallum \\
The Netherlands (formerly at the CWI, Amsterdam)} \email{j.vandelune@hccnet.nl}

\date{\today}

\begin{abstract}
In this paper we introduce the real valued real analytic function  $\kappa(t)$  
implicitly defined by 
\[e^{2\pi i \kappa(t)}=-e^{-2i\vartheta(t)}\frac{\zeta'(\frac12-it)}{
\zeta'(\frac12+it)},\qquad  (\kappa(0)=-\medio).\] 
By studying the equation  $\kappa(t) = n$  (without making any unproved hypotheses),
we will show that (and how) this function is closely related to the (exact) position of the zeros of  Riemann's   $\zeta(s)$ and  $\zeta'(s)$.
Assuming the Riemann hypothesis and the simplicity of the zeros of  $\zeta(s)$, 
it will follow that the ordinate of the zero
$1/2 + i \gamma_n$  of  $\zeta(s)$  will be the unique solution to the equation  $\kappa(t) = n$.
\end{abstract}

\maketitle

\tableofcontents

\section{Introduction.}

The functional equation of Riemann's zeta function $\zeta(s)$  implies that
$\zeta(\medio+it)=Z(t)e^{-i\vartheta(t)}$
where $Z(t)$ and $\vartheta(t)$ are real valued and real analytic 
functions and the \emph{phase}
$-\vartheta(t)$ is a rather simple function depending only on Euler's 
gamma function $\Gamma(s)$. 
An analogous decomposition is valid for any meromorphic function. We give a 
formal definition of the phase of a  real analytic function in Section
\ref{phasearg}. 

We will
define some functions related to the zeros of $\zeta(s)$ and the phase of related
functions.
Of course, these functions have appeared in the literature but only in an 
implicit way and have not been studied for their own sake.  For example,
Levinson and Montgomery \cite{LM} define 
\[J(\medio+it):=\zeta(\medio+it)+\zeta'(\medio+it)\Bigl[
\frac{h'(\frac12+it)}{h(\frac12+it)}+\frac{h'(\frac12-it)}{h(\frac12-it)}\Bigr]^{-1}\]
where $h(s)=\pi^{-s/2}\Gamma(s/2)$, 
and assert that \emph{the determination of the number of zeros of $\zeta(s)$ in 
$[0< ]\sigma<\frac12$ can be conveniently ascertained from the variation of 
$\arg J(\frac12+it)$}.

They do not use the simplified form 
\[J(\medio+it)=-e^{-2i\vartheta(t)}\frac{\zeta'(\medio-it)}{2\vartheta'(t)}.\]
With our notations we would have
\[\ph J(\medio+it)=\pi-2\vartheta(t)-\ph\zeta'(\medio+it)=\mediopi+\pi\kappa(t)-\vartheta(t)=2\pi-E(t).\]
Here $\kappa(t)$ is the main function introduced here. It is closely connected with the zeros of $\zeta(s)$, and is
implicitly used in Levinson \cite{L}*{equation (1.6)}  to prove that more than 
$1/3$ of the zeros of $\zeta(s)$ are on the critical line.

In our paper we seldomly assume the RH, and use the standard notations of
the subject. Therefore we shall denote  the zeros of $\zeta(s)$ on the upper 
half-plane by
$\beta_n+i\gamma_n$ (where $\beta_n$ and $\gamma_n$ are real numbers) and
$0<\gamma_1\le\gamma_2\le\cdots$. If a zero is multiple with multiplicity $m$, then 
it appears precisely $m$ times consecutively in the above sequence. 
\cite{T}*{Chapter 9, p.~214}.  We shall need to introduce another related sequence of \emph{real}
numbers $(0<)\ \gg_1<\gg_2<\cdots $ defined so that the set
$\{\gg_n\colon n\in\N\}:=\{ t>0:  \zeta(\medio+it)=0\}$. Here only the ordinates of 
the zeros  on the critical line appear. These $\gg_n$ do not repeat by any 
circumstance.

The two sequences $(\gg_n)$ and $(\gamma_n)$ coincide if and only if the RH is 
true and all the zeros of $\zeta(s)$ on the critical line are simple.

Even in  case the RH were not true,  
we will show that $\kappa(t)$ is related
to the zeros of $\zeta(s)$ on the critical line. We will show that $\kappa(\gg_n)=n$ for all natural numbers, independently of any hypothesis.

The relations between the zeros of $\zeta(s)$ and $\zeta'(s)$ has been the object
of much study. Starting with Speiser \cite{S} who showed that the RH is equivalent 
to $\zeta'(s)$ having no zeros in $0<\sigma<\frac12$, Levinson and Montgomery \cite{LM}
give a quantified version of Speiser's theorem, and Berndt \cite{B} gives an estimation of 
the number of zeros of $\zeta'(s)$ to a given height. Great interest in the zeros of $\zeta'(s)$ 
is related to their horizontal distribution, in which many questions remain open
(see Levinson Montgomery \cite{LM},  Conrey and Ghosh \cite{CG}, Soundararajan 
\cite{So}, Zhang \cite{Z}, Garaev and Y{\i}ld{\i}r{\i}m \cite{GY}).  Here we get 
a new way to study these relationships by means of our function $\kappa(t)$.  The number of zeros of $\zeta(s)$ on an interval of the 
critical line not counting multiplicities is related to the increment of $\kappa(t)$
in this interval. 
Assuming the RH this function will be strictly increasing, so $\kappa'(t)\ge0$.  The connection
is by means of equation \eqref{E:kappaprime} which represents this function in terms of the zeros of  $\zeta'(s)$.

Therefore, $\kappa(t)$ is related to the zeros of $\zeta(s)$ (Prop.~\ref{P:17}), 
and $\kappa'(t)$ is fully
determined by the zeros of $\zeta'(s)$ (Prop.~\ref{P: kappaprime}).  
The relationship of $\kappa'(t)$ with the
zeros of $\zeta(s)$ is also direct and double (Prop.~\ref{kappaprimeatzeros} and 
equation \eqref{E:45}). See figure \ref{F:two ways} for a 
graphical description of these relations. 

In Section \ref{phasearg} we give the definition and (some simple) properties of the 
decomposition in \emph{phase} and \emph{signed modulus} of a real analytic function. In particular,
in Proposition \ref{phaseint} we will write the phase as a convergent integral. 
After this 
we devote Section \ref{S:theta} to  some properties of the phase 
$-\vartheta(t)$ of 
$\zeta(\medio+it)$. Since we will use its convexity for all $t>0$, we give a
simple derivation of this fact. Section \ref{S:kappa} is devoted to the 
introduction of $\kappa(t)$.  The definition in Proposition \ref{E:defk} 
\[e^{2\pi i
\kappa(t)}=1+2\vartheta'(t)\frac{\zeta(\medio+it)}{\zeta'(\medio+it)},\qquad
\kappa(0)=-\medio.\]
is possible because the function in the right hand side makes a circular movement for 
$t\in\R$. We study the relationship of $\kappa(t)$ with $\ph\zeta'(\medio+it)$ and
$\vartheta(t)$.   The function $\kappa(t)$  is a complicated function, its behavior 
being
connected with the RH. We show here the equation
\[\kappa(\gg_n)=n\]
which determines the set of \emph{real} numbers $t$ with $\zeta(\medio+it)=0$. 

Proposition \ref{midvalues} may come as a  surprise. It relates the points 
where  $\kappa(t)$ is 
half an integer with the zeros of $Z'(t)$. Assuming the RH the function $\kappa(t)$ will 
be strictly increasing and between $\gamma_n$ and $\gamma_{n+1}$ there would be
only one zero of $Z'(t)$, situated just at the point where $\kappa(t)=n+\frac12$.  
In the next
section we show what of this remains true if we do not assume the RH, and see the 
first application of the function $\kappa'(t)$.

The main result of Section \ref{S:conection} is a formula for $\kappa'(t)$
in terms of the zeros of $\zeta'(s)$ (see Proposition \ref{P: kappaprime}).  
Therein appears a constant $A$ which we relate in equation \eqref{E:A} with the zeros
of $\zeta'(s)$.  In Section \ref{counting} we  obtain the value $A=\frac12\log 2$. 
We give a  proof that relates this constant to the difference in the counting
of zeros of $\zeta(s)$ given by Riemann and the one for the zeros of $\zeta'(s)$
given by Berndt.  Also we include a proof that the RH implies $\kappa'(t)>0$ for 
$t>a_\kappa$. 

Section \ref{kappaprimeandzeros} establishes the connection of $\kappa'(t)$ with the 
zeros of $\zeta(s)$. We know from Section \ref{S:kappa} that for  $n<m$ we have
$\int_{\gg_n}^{\gg_m}\kappa'(t)\,dt = m-n$.  We show that 
$\kappa'(\gg_n)=\vartheta'(\gg_n)/\omega$ where $\omega$
is the multiplicity of the zero $\frac12+i\gg_n$ of $\zeta(s)$.  In Proposition 
\ref{T:GY} we apply 
these relationships to give,
assuming the RH,  a new proof of a strengthening of a  Theorem of 
Garaev and Y{\i}ld{\i}r{\i}m \cite{GY} (which they prove unconditionally). 
In Section \ref{S:E-S} we introduce a related function $E(t)$ and show its
relationship with the classical function $S(t)$ and with a function $\RH(t)$ which counts the failures up to height $t$ of both the RH and 
the simplicity of the zeros of $\zeta(s)$. This is almost the function considered
by Levinson and Montgomery. 

Most of the functions appearing in this paper were found some years ago (in 1997)
 by one of us 
(JvdL) while searching  for a formula (or equation) for the exact
location of the non-trivial zeros of the Riemann zeta function.

\section{Phase and argument of a  function.}\label{phasearg}

The results in this section are easy but we did not find any proper
references. We include the simple proofs and introduce our 
notations about \emph{phase} and \emph{argument} of a real analytic function. 

\begin{definition}
A function  $f\colon\R\to\C$ is called real analytic if for every
$t_0\in\R$ there exists a convergent power series $P(z)=\sum_{k=0}^\infty
c_kz^k$ such that $f(t)=P(t-t_0)$ for all  $t$ in a neighborhood of
$t_0$. In other words: A function   $f\colon\R\to\C$ called real analytic 
if  $f$  has an analytic extension to a neighborhood of  $\R$.
\end{definition}

\begin{proposition}\label{P:arg}
If  $f\colon \R\to\C\smallsetminus \{0\}$ is a real analytic
function, then there exists a real analytic function $g$ such that
$f(t)=e^{g(t)}$ for every $t\in\R$.
\end{proposition}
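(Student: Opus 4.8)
The plan is to construct $g$ directly as an antiderivative of the logarithmic derivative $f'/f$, which is globally well-defined on $\R$ precisely because $f$ omits the value $0$. Since $f$ is real analytic and nowhere vanishing, the quotient $f'/f$ is again real analytic on all of $\R$; in particular it is continuous, so it has a real analytic antiderivative. Fix any point, say $t=0$, choose a complex number $w_0$ with $e^{w_0}=f(0)$, and set
\[
g(t):=w_0+\int_0^t \frac{f'(u)}{f(u)}\,du.
\]
This $g$ is real analytic on $\R$ (an antiderivative of a real analytic function, with a constant added), and $g(0)=w_0$.

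The key step is then to verify the identity $f(t)=e^{g(t)}$ for all $t$. I would consider the auxiliary function $h(t):=f(t)e^{-g(t)}$, which is real analytic and, by construction of $g$, satisfies $h(0)=f(0)e^{-w_0}=1$. Differentiating gives $h'(t)=\bigl(f'(t)-f(t)g'(t)\bigr)e^{-g(t)}=\bigl(f'(t)-f(t)\cdot\frac{f'(t)}{f(t)}\bigr)e^{-g(t)}=0$ for every $t\in\R$, using $g'(t)=f'(t)/f(t)$. Hence $h$ is constant on $\R$, and since $h(0)=1$ we conclude $h\equiv 1$, i.e.\ $f(t)=e^{g(t)}$ everywhere.

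The only mild subtlety — and the place where one must be slightly careful — is the choice of $w_0$: such a $w_0$ exists because $f(0)\in\C\smallsetminus\{0\}$ and the complex exponential is surjective onto $\C\smallsetminus\{0\}$. No branch-of-logarithm or monodromy issue arises here because we never need a holomorphic logarithm on a region of $\C$; the integral above is along the real line only, where $f'/f$ is a genuine real analytic (hence continuous) function, so the antiderivative exists by the fundamental theorem of calculus and is automatically real analytic. I would remark that $g$ is unique up to adding an integer multiple of $2\pi i$, since any two such functions differ by a continuous $\Z$-valued function on the connected set $\R$, hence by a constant in $2\pi i\Z$.
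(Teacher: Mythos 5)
Your proof is correct, but it takes a genuinely different route from the paper's. The paper extends $f$ to a nonvanishing holomorphic function on a simply connected open neighborhood $G=\bigcup_{t_0}\Delta(t_0)$ of $\R$ in $\C$ and then invokes the standard theorem that a nowhere-zero holomorphic function on a simply connected domain has a holomorphic logarithm; the desired $g$ is the restriction of that logarithm to $\R$. You instead stay entirely on the real line: you integrate the logarithmic derivative $f'/f$ (real analytic because $f$ never vanishes), fix the constant by choosing $w_0$ with $e^{w_0}=f(0)$, and verify $f=e^{g}$ by showing $fe^{-g}$ has vanishing derivative on the connected set $\R$. Both arguments are sound. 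The paper's version is a one-line appeal to a classical result, but it quietly requires checking that the local power-series extensions patch into a single analytic function on $G$ and that $G$ is simply connected; your version avoids any topology of plane domains and any monodromy discussion, needing only the fundamental theorem of calculus together with the (easy) fact that a primitive of a real analytic function is real analytic. Your construction also has the advantage of being explicit and of anticipating Proposition~\ref{phaseint}: taking imaginary parts of your formula for $g$ gives exactly the integral representation $\ph f(t)=\ph f(0)+\int_0^t\Im\bigl(f'(x)/f(x)\bigr)\,dx$ that the paper proves separately later. Your closing remark on uniqueness of $g$ up to an element of $2\pi i\Z$ is likewise correct and matches the paper's observation following Corollary~\ref{P:argcor}.
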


\begin{proof}
For every $t_0\in\R$  let $\Delta(t_0)$ be a disk with center at
$t_0$ such that $f(t)=P(t-t_0)$ for $t\in\Delta(t_0)\cap\R$, and
such that $P(z-t_0)\ne0$ for $z\in\Delta(t_0)$. The union
$G=\bigcup_{t_0} \Delta(t_0)$ is a simply connected domain and $f$
can be extended to $G$ as an analytic  function. Since $f(z)\ne 0$
for $z\in G$, there exists an analytic function $g$ on $G$ such that
$f(z)=e^{g(z)}$ for all $z\in G$.
\end{proof}

\begin{corollary}\label{P:argcor}
If  $f\colon \R\to\C\smallsetminus\{0\}$ is a real analytic
function, then there exists a real analytic function
$\varphi\colon\R\to\R$ such that $f(t)=|f(t)|e^{i\varphi(t)}$.
\end{corollary}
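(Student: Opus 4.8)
The plan is to obtain $\varphi$ as the imaginary part of the function $g$ produced by Proposition~\ref{P:arg}. First I would apply Proposition~\ref{P:arg} to the given real analytic nowhere-vanishing function $f$, obtaining a real analytic $g\colon\R\to\C$ with $f(t)=e^{g(t)}$. Writing $g(t)=u(t)+i\varphi(t)$ with $u=\Re g$ and $\varphi=\Im g$, the fact that $g$ is real analytic (hence extends analytically to a neighborhood of $\R$) immediately gives that both $u$ and $\varphi$ are real analytic $\R\to\R$: near any $t_0$, if $g(t)=\sum_k c_k (t-t_0)^k$ converges, then $u$ and $\varphi$ are given by the series with coefficients $\Re c_k$ and $\Im c_k$ respectively, which converge on the same real interval.

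Then $f(t)=e^{u(t)}e^{i\varphi(t)}$, and since $e^{i\varphi(t)}$ has modulus $1$ and $e^{u(t)}>0$, taking absolute values gives $|f(t)|=e^{u(t)}$, whence $f(t)=|f(t)|e^{i\varphi(t)}$ as required. This is essentially all there is to it; the corollary is a routine extraction of real and imaginary parts from Proposition~\ref{P:arg}.

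There is no serious obstacle here — the only point requiring a word of care is the claim that the imaginary part of a real analytic complex-valued function is again real analytic, which is why I would spell out the power-series argument above rather than leave it implicit. One could alternatively phrase it via the analytic extension: $g$ extends holomorphically to an open neighborhood $G$ of $\R$, and $\varphi(t)=\frac{1}{2i}\bigl(g(t)-\overline{g(t)}\bigr)$; since $f$ takes the conjugate-symmetric values forced by $f$ being $\R$-valued inputs into a $\C$-valued map, $\overline{g(t)}$ is the boundary value of the holomorphic function $z\mapsto\overline{g(\bar z)}$ on the reflected neighborhood, so $\varphi$ extends analytically across $\R$. Either formulation suffices; the power-series one is the shortest to write.
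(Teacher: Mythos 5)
Your proof is correct and is exactly the route the paper intends: the corollary is left without a separate proof precisely because it follows from Proposition~\ref{P:arg} by writing $g=u+i\varphi$ and observing $|f|=e^{u}$, which is what you do. The power-series justification that $\Re g$ and $\Im g$ are again real analytic is a reasonable detail to spell out, though the paper treats it as immediate.
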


We write in such a  case $\varphi(t)=\arg f(t)$.   This is an
analytic (and hence continuous) determination of the argument of $f$.  
Two such functions
differ only by  an integral multiple of $2\pi$.

\begin{proposition}\label{P:ph}
If $f\colon \R\to\C$ is a real analytic function, then there are two
real  analytic functions $U\colon\R\to\R$ and
$\varphi\colon\R\to\R$ such that
\begin{displaymath}
f(t)=U(t)e^{i\varphi(t)}.
\end{displaymath}
Given two such representations, $f= U_1e^{i\varphi_1}$ and $f=U_2
e^{i\varphi_2}$, we have either $U_1=U_2$ and
$\varphi_1-\varphi_2=2k\pi$ or $U_1=-U_2$ and
$\varphi_1-\varphi_2=(2k+1)\pi$  for some integer $k$.
\end{proposition}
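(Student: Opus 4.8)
The plan is to split into existence and uniqueness. The case $f\equiv 0$ is trivial (take $U\equiv\varphi\equiv 0$; this is the degenerate case the uniqueness clause tacitly excludes), so assume $f\not\equiv 0$. Then its zero set $Z=\{t\in\R:f(t)=0\}$ is discrete, and $\R\setminus Z$ is open and dense. The point of this remark is that $Z$ is exactly the obstruction to applying Corollary~\ref{P:argcor} directly to $f$, and everything below is organized around getting past it.

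For existence I would pass to the quotient $w(t):=f(t)/\overline{f(t)}$. First note that $t\mapsto\overline{f(t)}$ is again real analytic on $\R$: extending $f$ holomorphically to a domain $\Omega$ symmetric about $\R$ (intersect with its reflection if necessary) and calling the extension $F$, the map $z\mapsto\overline{F(\bar z)}$ is holomorphic on $\Omega$ and restricts to $\overline{f(t)}$ on $\R$. Hence $w$ is real analytic on $\R\setminus Z$ with $|w|\equiv 1$ there. The key local step is that $w$ extends real-analytically across each zero: near $t_0\in Z$ the power series gives $f(t)=(t-t_0)^m f_1(t)$ with $f_1$ real analytic and $f_1(t_0)\ne 0$; since $(t-t_0)^m$ is real on $\R$ we get $\overline{f(t)}=(t-t_0)^m\overline{f_1(t)}$, so $w(t)=f_1(t)/\overline{f_1(t)}$ near $t_0$, which is real analytic and of modulus $1$ at $t_0$ itself. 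Gluing these local extensions (legitimate since $Z$ is discrete) produces a real analytic $w\colon\R\to\C\setminus\{0\}$ with $|w|\equiv 1$. Now Corollary~\ref{P:argcor} applies to $w$, giving $w(t)=e^{i\theta(t)}$ with $\theta$ real analytic; set $\varphi:=\theta/2$ and $U:=f\,e^{-i\varphi}$. Then $U$ is real analytic and $U/\overline U=(f/\overline f)e^{-2i\varphi}=w\,e^{-i\theta}=1$, so $U$ is real-valued, and $f=U e^{i\varphi}$ by construction.

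For uniqueness, suppose $f=U_1e^{i\varphi_1}=U_2e^{i\varphi_2}$ with all four functions real analytic and $U_1,U_2$ real-valued. On $\R\setminus Z$ both $U_j$ are nonzero and $U_1/U_2=e^{i(\varphi_2-\varphi_1)}$; the left side is real, the right side has modulus $1$, so at each such point $\varphi_2-\varphi_1\in\pi\Z$ and $U_1/U_2=\pm 1$ accordingly. The function $\varphi_2-\varphi_1$ is continuous on all of $\R$ and lands in the discrete set $\pi\Z$ on the dense subset $\R\setminus Z$, hence on all of $\R$, hence it is a constant $\ell\pi$ with $\ell\in\Z$ since $\R$ is connected. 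If $\ell$ is even, $e^{i(\varphi_2-\varphi_1)}=1$ forces $U_1=U_2$ on $\R\setminus Z$ and so on $\R$ by continuity; if $\ell$ is odd, $U_1=-U_2$ similarly. This is exactly the stated dichotomy.

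The only genuinely delicate point is in the existence step: verifying that $f/\overline f$ extends real-analytically across the zeros of $f$ and keeps modulus $1$ there; the rest is formal manipulation together with the elementary fact that a continuous map from $\R$ into a discrete set is constant. If one prefers to avoid the quotient, an alternative for existence is to observe that $\psi(t):=\Im\bigl(f'(t)/f(t)\bigr)$, a priori defined off $Z$, extends real-analytically across each $t_0\in Z$ because the principal part $m/(t-t_0)$ of $f'/f$ is real on $\R$; then $\varphi(t):=c+\int_{t_1}^{t}\psi$, with $t_1\notin Z$ and $e^{ic}=f(t_1)/|f(t_1)|$, works, since $(f e^{-i\varphi})'/(f e^{-i\varphi})=\Re(f'/f)$ is real, so $f e^{-i\varphi}$ has locally constant argument on $\R\setminus Z$, equal to $0$ at $t_1$, hence is real-valued. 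I would present whichever of the two turns out shorter in writing.
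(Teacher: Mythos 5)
Your proof is correct, and it reaches the conclusion by a genuinely different route from the paper's. For existence the paper divides $f$ by a Weierstrass product $g(z)=z^m\prod_{n>m}E_{n-1}(z/a_n)$ built from the real zeros of $f$: since $g$ is real on $\R$ and $f/g$ is zero-free, Corollary \ref{P:argcor} applies to $f/g$ and one takes $U=gh$. You instead pass to $w=f/\overline{f}$, check locally that the factor $(t-t_0)^m$ cancels so that $w$ extends across the zeros with modulus $1$, apply Corollary \ref{P:argcor} to $w$, and halve the resulting argument. The underlying mechanism is the same in both cases --- the zeros of $f$ are real, so the ``zero part'' of $f$ can be taken real on $\R$ --- but your implementation is purely local and avoids the Weierstrass factorization theorem and the canonical factors entirely, at the price of the (correctly justified) reflection argument showing that $t\mapsto\overline{f(t)}$ is real analytic. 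Your uniqueness argument is also slightly more careful than the paper's: the paper asserts that $U_1/U_2$ is a zero-free real analytic function of modulus one, which tacitly requires extending that quotient across the common zeros of $U_1$ and $U_2$, whereas you argue directly that the continuous function $\varphi_2-\varphi_1$ takes values in $\pi\Z$ on a dense set and is therefore a constant element of $\pi\Z$; you also make explicit the degenerate case $f\equiv 0$, which the paper passes over in silence. The alternative you sketch at the end, integrating $\Im(f'/f)$ after observing that its principal parts at real zeros are real, is essentially the mechanism of Proposition \ref{phaseint} later in the paper and would work equally well.
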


\begin{proof}
If $f$ does not vanish, then $|f|$ is real analytic  and by
Corollary \ref{P:argcor} there exists a real analytic  function
$\varphi\colon\R\to\R$ such that $f\;|f|^{-1}=e^{i\varphi}$, and
we can take $U=|f|$ in this case.

Now assume that $f$ has real zeros.  Let $a_n$ be the real zeros of
$f(t)$ listed with multiplicities. We may assume that $a_1=\cdots
=a_m=0$ and all the others non-zero. By Weierstrass' factorization
theorem there exists an entire function
\begin{displaymath}
g(z)=z^m\prod_{n>m} E_{n-1}(z/a_n),\qquad  z\in\C
\end{displaymath}
whose zeros are the numbers $a_n$, and the $E_n(z)=(1-z)e^{z+z^2/2+\cdots +z^n/n}$ are 
the canonical factors.  Observe also that this function
is real for real $z=t$. By the previous argument there exist real
analytic functions $h$ and $\varphi$ such that  $f/g= h
e^{i\varphi}$. Thus $f=(g h) e^{i\varphi}$, and $U=gh$. This
proves that the claimed decomposition exists.

Finally, if $f=U_1 e^{i\varphi_1}=U_2 e^{i\varphi_2}$, then
$U_1/U_2$ is a real analytic function without zeros. Also
$|U_1/U_2|=|e^{i(\varphi_2-\varphi_1)}|=1$ and it follows that
$U_1/U_2$ is either equal to $1$ or to $-1$. In the first case
$e^{i(\varphi_2-\varphi_1)}=1$ and
$\varphi_2=2k\pi+\varphi_1$ for some integer $k$. The other case
may be treated similarly.
\end{proof}

\begin{definition}
Given a real analytic function $f\colon\R\to\C$ we call phase of $f$
any real analytic  function $\ph f\colon\R\to\R$ such that
$f(t)=U(t)e^{i\ph f(t)}$ with $U\colon\R\to\R$ a real analytic
function.
\end{definition}

If $g_1$ and $g_2$ are two such functions there exists an integer
$k$ such that $g_1(t)=g_2(t)+k\pi$ for every $t\in\R$.  

Observe that the above definition is not standard. We are 
making use of the 
word \emph{phase} with a peculiar mathematical meaning.

The main difference
between the phase  of a real analytic function and its argument is that
for some $t\in\R$ the value $\ph f(t)$ may not be equal to one of the arguments
of the complex number $f(t)$. We will have only $\ph f(t) $ equal to this 
argument modulo $\pi$.  

\begin{example}
It is easy to check that 
\begin{equation}\label{cosfase}
\cos\mediopi(\medio+it)=\frac{1}{\sqrt{2}}\sqrt{\cosh\pi
t}\,e^{-i\arctan(\tanh\frac{\pi t}{2})}.
\end{equation}
\end{example}
\begin{example}\label{exampleone}
One of the most interesting examples is that of the zeta function on 
the critical line. In this case we have  (see Edwards \cite{E}*{p.~119})
\begin{equation}\label{zetafase}
\zeta(\medio+it)=Z(t)e^{-i\vartheta(t)}
\end{equation}
where $Z\colon\R\to\R$ and $\vartheta\colon\R\to\R$  are real analytic. $Z(t)$ is the Riemann-Siegel function
(sometimes called Hardy function \cite{I}).
\end{example}
\begin{example}\label{examplewo} 
The phase  $-\vartheta(t)$ in Example \ref{exampleone} is 
related to the phase of $\Gamma(\frac14+i\frac{t}{2})$ by 
\begin{equation}
\Gamma(\cuarto+i\;\medio t)=\left|\Gamma(\cuarto+i\;\medio
t)\right|\; e^{i(\vartheta(t)+\mediot \log\pi)}.
\end{equation}
(For more details see \cite{T}*{(4.17.2)}).
\end{example}
\begin{example} We have not found any reference for our next example:
\begin{equation}\label{gammafase2}
\Gamma(1/2+it)=\sqrt{\frac{\pi}{\cosh\pi
t}}\exp\bigl\{i(2\vartheta(t)+t\log(2\pi)+\arctan\tanh\mediopit)\bigr\}.
\end{equation}
This may be shown using only properties of  $\Gamma(s)$ but we present  a proof
based on the functional equation of $\zeta(s)$. 

Let $\Phi(s)=\medio\zeta(s)\zeta(1-s)$. Then, 
by the functional equation
\begin{equation}\label{phis}
\Phi(s)=\cos\frac{\pi
s}{2}(2\pi)^{-s}\Gamma(s)\zeta(s)^2.
\end{equation}
Substituting  \eqref{cosfase} and \eqref{zetafase} into this equation, 
we get with $s=\frac12+it$
\begin{equation}
\medio Z(t)^2=\sqrt{\frac{\cosh(\pi t)}{2}}\;e^{-i\arctan(\tanh\frac{\pi
t}{2})}\cdot
\frac{1}{\sqrt{2\pi}}e^{-it\log(2\pi)}\cdot\Gamma(s)\cdot
e^{-2i\vartheta(t)}Z(t)^2
\end{equation}
from which \eqref{gammafase2} follows for $Z(t)\ne0$. But since the argument in 
\eqref{gammafase2} is real analytic it is true for all $t$.
\end{example}

\begin{proposition}\label{phaseint}
If  $f$ is a non-constant real analytic function, then for every
$t\in\R$ we have
\begin{equation}\label{E:phase}
\ph f(t)=\ph f(0)+\int_0^t\Im\frac{f'(x)}{f(x)}\,dx.
\end{equation}
\end{proposition}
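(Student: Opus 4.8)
The plan is to reduce \eqref{E:phase} to the polar decomposition of Proposition \ref{P:ph}. By that proposition we may write $f(t)=U(t)e^{i\varphi(t)}$ with $U\colon\R\to\R$ and $\varphi\colon\R\to\R$ real analytic and $\varphi=\ph f$. Since $f$ is non-constant it is not identically zero, so by the identity theorem its zero set $N$ (which coincides with that of $U$) is a discrete subset of $\R$. On $\R\smallsetminus N$ one differentiates the product directly, obtaining $f'/f=U'/U+i\varphi'$; as $U$, $U'$ and $\varphi'$ are real-valued on $\R$, this yields
\[
\Im\frac{f'(x)}{f(x)}=\varphi'(x),\qquad x\in\R\smallsetminus N.
\]

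The one point that needs care is the behaviour of the integrand at the real zeros of $f$. Near such a zero $a$, of multiplicity $m$, write $f(z)=(z-a)^m h(z)$ with $h$ analytic in a neighbourhood of $a$ and $h(a)\ne0$; then $f'/f=m/(z-a)+h'/h$, and on the real axis the summand $m/(x-a)$ is real, so $\Im(f'/f)=\Im(h'/h)$ in a punctured neighbourhood of $a$. Since $\Im(h'/h)$ is real analytic at $a$, the apparent singularity of $x\mapsto\Im\bigl(f'(x)/f(x)\bigr)$ at $a$ is removable; thus this function extends to a genuine real analytic function on all of $\R$, and in particular the integral in \eqref{E:phase} is the ordinary integral of a continuous function. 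The function $\Im(f'/f)$ so obtained and the real analytic function $\varphi'$ agree on the dense set $\R\smallsetminus N$, hence they agree everywhere.

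It then only remains to integrate: for every $t\in\R$,
\[
\int_0^t\Im\frac{f'(x)}{f(x)}\,dx=\int_0^t\varphi'(x)\,dx=\varphi(t)-\varphi(0)=\ph f(t)-\ph f(0),
\]
which is \eqref{E:phase}. The right-hand side does not depend on the particular choice of $\ph f$, since two admissible phases differ by a constant (an integral multiple of $\pi$) which cancels in the difference $\ph f(t)-\ph f(0)$, while the left-hand side depends only on $f$ itself. The main — indeed the only — obstacle is the verification that $\Im(f'/f)$ has removable singularities at the real zeros of $f$ and that its real analytic extension still equals $\varphi'$ there; once that is settled, the identity is immediate from $f=U e^{i\varphi}$.
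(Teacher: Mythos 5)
Your argument is correct and follows essentially the same route as the paper: both decompose $f=U e^{i\varphi}$ via Proposition \ref{P:ph}, compute $f'/f=U'/U+i\varphi'$ away from the zeros to get $(\ph f)'=\Im(f'/f)$ there, and then integrate. The only difference is that you spell out the removability of the singularities at the real zeros via the local factorization $f(z)=(z-a)^m h(z)$, a point the paper asserts without detail.
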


\begin{proof}
The function $\ph f(t)$ is real analytic, so that
\begin{displaymath}
\ph f(t)=\ph f(0)+\int_0^t(\ph f)'(x)\,dx.
\end{displaymath}
There exists a real analytic function $U$ such that
$f(t)=U(t) e^{i\ph f(t)}$. Therefore, if $f(x)\ne0$ then
\begin{displaymath}
\frac{f'(x)}{f(x)}=\frac{U'(x)}{U(x)}+ i (\ph f)'(x)
\end{displaymath}
so that
\begin{displaymath}
(\ph f)'(x)=\Im\frac{f'(x)}{f(x)}.
\end{displaymath}
It follows that $\Im\frac{f'(x)}{f(x)}$ is in fact a real analytic
function, the possible singularities at the points where $f(x)=0$
being removable.
\end{proof}

\begin{example}\label{exampletheta}
By Examples \ref{exampleone} and \ref{examplewo}  we  have
\begin{equation}\label{intvartheta}
\vartheta(t)=-\int_0^t\Re\frac{\zeta'(\frac12+ix)}{\zeta(\frac12+ix)}\,dx=-\frac{t}{2}\log\pi+\frac{1}{2}
\int_0^t\Re\frac{\Gamma'(\frac14+i\frac{x}{2})}
{\Gamma(\frac14+i\frac{x}{2})}\,dx.
\end{equation}
\end{example}

\section{The function $\vartheta(t)$.}\label{S:theta}

In this section we recall some properties of the function $\vartheta(t)$ introduced in Example \ref{exampleone}.

We need to show that $\vartheta(t)=0$ has only one solution for $t>0$. To this
end we must show 
explicit formulae for $\vartheta(t)$ for \emph{small} $t$, which are seldomly
considered.

Indeed, after having introduced $\vartheta(t)$, most authors
immediately start discussing its asymptotic expansion (compare
Edwards \cite{E}*{p.~119} and Gabcke \cite{G}*{p.~4}).

\begin{proposition}
For $\vartheta(t)$ we have the following series expansion
($\,$con\-vergent for all $t\in\R$$\,$) ($\,$$\gamma$ being Euler's
constant$\,$)
\begin{equation}\label{E:thetaTotal}
\vartheta(t)=-\medio\bigl(\gamma+\log\pi+3\log2+\mediopi\bigr)t+
\sum_{k=0}^\infty\Bigl(\frac{2t}{4k+1}-\arctan\frac{2t}{4k+1}\Bigr).
\end{equation}
\end{proposition}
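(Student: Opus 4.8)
The plan is to read the expansion straight off the Weierstrass product for $\Gamma$. By Example~\ref{examplewo} together with the normalisation $\vartheta(0)=0$ (recall $\Gamma(\tfrac14)>0$), the map $t\mapsto\vartheta(t)+\tfrac t2\log\pi$ is the continuous determination of $\arg\Gamma(\tfrac14+i\tfrac t2)$ vanishing at $t=0$; equivalently $\vartheta(t)=\Im\log\Gamma(\tfrac14+i\tfrac t2)-\tfrac t2\log\pi$ for the branch of $\log\Gamma$ continued along the ray $\{\tfrac14+i\tfrac t2:t\ge0\}$. I would start from
\[
\log\Gamma(z)=-\gamma z-\log z+\sum_{n=1}^{\infty}\Bigl(\frac{z}{n}-\log\Bigl(1+\frac{z}{n}\Bigr)\Bigr),
\]
valid with principal logarithms for $z$ in the right half-plane (and real for real $z>0$, hence along that ray exactly the branch we need), and take imaginary parts at $z=\tfrac14+i\tfrac t2$, $t\ge0$. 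Since $\arg(\tfrac14+i\tfrac t2)=\arctan 2t$ and $\arg(1+\tfrac zn)=\arctan\tfrac{2t}{4n+1}$, this gives
\[
\vartheta(t)+\tfrac t2\log\pi=-\tfrac{\gamma t}{2}-\arctan 2t+\sum_{n=1}^{\infty}\Bigl(\frac{t}{2n}-\arctan\frac{2t}{4n+1}\Bigr).
\]

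Next I would reshape the right-hand side into the asserted form. As $|y-\arctan y|\le\tfrac13|y|^{3}$, both $\sum_{k\ge0}\bigl(\tfrac{2t}{4k+1}-\arctan\tfrac{2t}{4k+1}\bigr)$ (terms $O(k^{-3})$) and $\sum_{n\ge1}\bigl(\tfrac{t}{2n}-\tfrac{2t}{4n+1}\bigr)$ (terms $O(n^{-2})$) converge; splitting $\tfrac t{2n}=\bigl(\tfrac{t}{2n}-\tfrac{2t}{4n+1}\bigr)+\tfrac{2t}{4n+1}$ and writing $-\arctan 2t=(2t-\arctan 2t)-2t$ lets me regroup legitimately into
\[
\vartheta(t)+\tfrac t2\log\pi=-\tfrac{\gamma t}{2}-2t+t\sum_{n=1}^{\infty}\Bigl(\frac{1}{2n}-\frac{2}{4n+1}\Bigr)+\sum_{k=0}^{\infty}\Bigl(\frac{2t}{4k+1}-\arctan\frac{2t}{4k+1}\Bigr),
\]
the point of splitting the $n$-sum being that each resulting piece converges. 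It remains to evaluate $\sum_{n\ge1}\bigl(\tfrac1{2n}-\tfrac2{4n+1}\bigr)=\tfrac12\sum_{n\ge1}\tfrac1{n(4n+1)}$. Recognising $\sum_{n\ge1}\tfrac1{n(4n+1)}=\sum_{n\ge0}\bigl(\tfrac1{n+1}-\tfrac1{n+5/4}\bigr)=\psi(\tfrac54)+\gamma$, and combining $\psi(\tfrac54)=\psi(\tfrac14)+4$ with Gauss's value $\psi(\tfrac14)=-\gamma-\tfrac\pi2-3\log 2$, this sum equals $4-\tfrac\pi2-3\log 2$ (equivalently, the substitution $x=u^{4}$ turns it into the elementary integral $4\int_{0}^{1}\tfrac{u^{3}}{(1+u)(1+u^{2})}\,du$). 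Feeding this back, the $\pm2t$ cancel and the linear part collapses to $-\tfrac t2(\gamma+\log\pi+3\log 2+\tfrac\pi2)$, which is \eqref{E:thetaTotal} for $t\ge0$; since both sides are real analytic on $\R$ (the series converges locally uniformly, even after complexifying $t$), the identity extends to all $t\in\R$, as one may also see from oddness.

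The step needing the most care is the first one: identifying the branch of $\log\Gamma$ produced by the Weierstrass series with the one whose imaginary part equals $\vartheta(t)+\tfrac t2\log\pi$. This is settled by noting that along $z=\tfrac14+i\tfrac t2$, $t\ge0$, every factor $1+z/n$ and $z$ itself stays in the open right half-plane, so all principal logarithms vary continuously, the series represents a continuous function which is real (hence of vanishing imaginary part) at $t=0$; by Example~\ref{examplewo} this imaginary part and $\vartheta(t)+\tfrac t2\log\pi$ are then continuous arguments of the same non-vanishing function agreeing at $t=0$, hence everywhere. The imaginary-part computations $\arg(1+z/n)=\arctan\tfrac{2t}{4n+1}$, the justification of the rearrangement, and the partial-fraction integral are routine. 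One could instead start from the integral formula of Example~\ref{exampletheta}, expand $\Gamma'/\Gamma$ by its Mittag--Leffler series and integrate termwise; that is the same computation, differentiated.
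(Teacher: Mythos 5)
Your proof is correct and takes essentially the same route as the paper: both read the expansion off the Weierstrass product for $\Gamma$ at $z=\tfrac14+i\tfrac t2$, fix the branch by the value at $t=0$, and regroup into $\sum_{k\ge0}\bigl(\tfrac{2t}{4k+1}-\arctan\tfrac{2t}{4k+1}\bigr)$ plus a linear term whose coefficient reduces to $\sum_{n\ge1}\bigl(\tfrac1n-\tfrac1{n+1/4}\bigr)=4-3\log 2-\tfrac{\pi}{2}$. The only cosmetic difference is that you evaluate this last sum via Gauss's value of $\psi(\tfrac14)$ while the paper uses the elementary integral you yourself mention as an alternative.
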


\begin{proof} From the Weierstrass product for $\Gamma(s)$ we obtain
\begin{multline*}
-\Im\log\Gamma\bigl(\cuarto+i\,\mediot)=\\
=\arg\Bigl\{
(\cuarto+i\,\mediot)e^{\gamma(\cuarto+i\,\mediot)}\prod_{k=1}^\infty
\Bigl(1+\frac{\cuarto+i\,\mediot}{k}\Bigr)e^{-(\cuarto+i\,\mediot)/k}\Bigr\}+2\ell\pi\\
=\arctan
(2t)+\frac{\gamma}{2}\,t+\sum_{k=1}^\infty\Bigl(\arctan\frac{2t}{4k+1}-\frac{t}{2k}\Bigr)-2k\pi
\end{multline*}
for some $\ell$,  $k\in\Z$.

Since  $\vartheta(t)=\arg\Gamma(\frac14+i\frac{t}{2})-\frac{t}{2}\log\pi$
we have
\begin{displaymath}
\vartheta(t)= -\frac{\gamma+\log\pi}{2}\,t-\arctan
(2t)-\sum_{k=1}^\infty\Bigl(\arctan\frac{2t}{4k+1}-\frac{t}{2k}\Bigr)+2k\pi
\end{displaymath}
so that, taking $t=0$, we find that $k=0$. We rewrite the last
series as follows
\begin{displaymath}
\sum_{k=1}^\infty\Bigl(\arctan\frac{2t}{4k+1}-\frac{t}{2k}\Bigr)
=-\frac{t}{2} \sum_{k=1}^\infty
\Bigl(\frac{1}{k}-\frac{1}{k+\cuarto}\Bigr)+
\sum_{k=1}^\infty\Bigl(\arctan\frac{2t}{4k+1}-\frac{2t}{4k+1}\Bigr).
\end{displaymath}
The first series  in the right hand side can be summed explicitly
\begin{multline*}
\sum_{k=1}^\infty
\Bigl(\frac{1}{k}-\frac{1}{k+\cuarto}\Bigr)=\sum_{k=1}^\infty\Bigl(\int_0^1u^{k-1}\,du-\int_0^1
u^{k-1+\frac14}\,du\Bigr)
= \int_0^1\frac{1-u^{1/4}}{1-u}\,du= \\
=4\int_0^1\frac{1-v}{1-v^4}\,v^3\,dv
=\int_0^1\frac{4v^3\,dv}{(1+v)(1+v^2)} 
=4-3\log2-\frac{\pi}{2}.
\end{multline*}
Combining these equations we obtain \eqref{E:thetaTotal}.
\end{proof}

\begin{proposition}
For every $t\in\R$ we have
\begin{equation}\label{E:thetaprime}
\vartheta'(t)=-\medio(\gamma+\log\pi)-\frac{2}{1+4t^2}-\sum_{k=1}^\infty
\Bigl(\frac{2(4k+1)}{(4k+1)^2+4t^2}-\frac{1}{2k}\Bigr).
\end{equation}
\end{proposition}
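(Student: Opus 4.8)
The plan is to obtain \eqref{E:thetaprime} simply by differentiating the series for $\vartheta(t)$ term by term. It is slightly more convenient to start not from \eqref{E:thetaTotal} itself but from the intermediate formula
\[
\vartheta(t)=-\frac{\gamma+\log\pi}{2}\,t-\arctan(2t)-\sum_{k=1}^\infty\Bigl(\arctan\frac{2t}{4k+1}-\frac{t}{2k}\Bigr)
\]
which appears in the proof of the previous proposition (once the integer $k$ there has been shown to be $0$), because the term-by-term derivative of this particular series is already in the shape demanded by \eqref{E:thetaprime}. The derivative of $\arctan(2t)$ is $\frac{2}{1+4t^2}$, and the derivative of the $k$-th summand is $\frac{2(4k+1)}{(4k+1)^2+4t^2}-\frac{1}{2k}$, so formally one gets at once
\[
\vartheta'(t)=-\frac{\gamma+\log\pi}{2}-\frac{2}{1+4t^2}-\sum_{k=1}^\infty\Bigl(\frac{2(4k+1)}{(4k+1)^2+4t^2}-\frac{1}{2k}\Bigr),
\]
which is exactly \eqref{E:thetaprime}.

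The only point that really needs to be checked is the legitimacy of differentiating under the summation sign, i.e.\ that the series of derivatives converges locally uniformly on $\R$. For this I would use the identity
\[
\frac{2(4k+1)}{(4k+1)^2+4t^2}-\frac{1}{2k}=\Bigl(\frac{2}{4k+1}-\frac{1}{2k}\Bigr)-\frac{8t^2}{(4k+1)\bigl((4k+1)^2+4t^2\bigr)},
\]
noting that $\frac{2}{4k+1}-\frac{1}{2k}=-\frac{1}{2k(4k+1)}=O(k^{-2})$ while, for $|t|\le T$, the last term is $O(k^{-3})$ with an implied constant depending only on $T$. Hence the derivative series is dominated on each interval $[-T,T]$ by a convergent series of constants; combined with the convergence of the original series at $t=0$, this justifies term-by-term differentiation and shows that $\vartheta'$ is given by \eqref{E:thetaprime} for all $t\in\R$. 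I do not expect any genuine obstacle here: the argument is the standard Weierstrass justification for differentiating a locally uniformly convergent series, and everything else is bookkeeping.

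As an independent check (and a self-contained alternative derivation) one may instead differentiate the integral representation of Example~\ref{exampletheta}, obtaining $\vartheta'(t)=-\medio\log\pi+\medio\Re\frac{\Gamma'}{\Gamma}(\cuarto+i\,\mediot)$, and then insert the Mittag--Leffler expansion $\frac{\Gamma'}{\Gamma}(z)=-\gamma-\frac1z+\sum_{k\ge1}\bigl(\frac1k-\frac1{z+k}\bigr)$ with $z=\cuarto+i\,\mediot$; taking real parts of $\frac1z$ and of $\frac1{z+k}$ reproduces \eqref{E:thetaprime} directly, and the required uniform convergence is again the same routine estimate.
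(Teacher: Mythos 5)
Your proposal is correct. The paper in fact prints this proposition without any proof at all, the intended argument evidently being term-by-term differentiation of the series just established for $\vartheta(t)$; your write-up supplies exactly that missing argument, and your choice to differentiate the intermediate formula $\vartheta(t)=-\frac{\gamma+\log\pi}{2}\,t-\arctan(2t)-\sum_{k\ge1}\bigl(\arctan\frac{2t}{4k+1}-\frac{t}{2k}\bigr)$ rather than \eqref{E:thetaTotal} itself is the right move, since it lands directly on the claimed form and avoids having to undo the constant rearrangement (the sum $\sum_k(\frac1k-\frac1{k+1/4})=4-3\log2-\frac{\pi}{2}$) that produced \eqref{E:thetaTotal}. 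The uniform-convergence estimate via $\frac{2}{4k+1}-\frac{1}{2k}=-\frac{1}{2k(4k+1)}$ and the $O(k^{-3})$ bound on the $t$-dependent remainder is exactly what is needed to legitimize the differentiation, and your cross-check through $\vartheta'(t)=-\frac12\log\pi+\frac12\Re\frac{\Gamma'}{\Gamma}\bigl(\frac14+i\frac t2\bigr)$ and the Mittag--Leffler expansion of $\Gamma'/\Gamma$ reproduces the same formula, so there is no gap.
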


\begin{corollary}\label{convexity}
The function $\vartheta(t)$ is convex on $(0,+\infty)$, and there exists
a unique positive real number $a_\vartheta$ where
$\vartheta'(a_{\vartheta})=0$.
\end{corollary}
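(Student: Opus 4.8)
The plan is to read everything off the series \eqref{E:thetaprime} for $\vartheta'(t)$. For the convexity I would differentiate that series term by term. The summand $\frac{2(4k+1)}{(4k+1)^2+4t^2}$ has derivative $-\frac{16t(4k+1)}{((4k+1)^2+4t^2)^2}$, which on any interval $|t|\le T$ is bounded by $16T/(4k+1)^3$; hence the differentiated series converges uniformly on compact sets and term-by-term differentiation is legitimate (the same trivially applies to the isolated term $\frac{2}{1+4t^2}$). This gives
\[
\vartheta''(t)=\frac{16t}{(1+4t^2)^2}+\sum_{k=1}^\infty\frac{16t(4k+1)}{((4k+1)^2+4t^2)^2},
\]
a sum of strictly positive terms for $t>0$. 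Thus $\vartheta''(t)>0$ on $(0,+\infty)$, so $\vartheta$ is (strictly) convex there, and in particular $\vartheta'$ is strictly increasing on $(0,+\infty)$.

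For the zero, strict monotonicity already yields uniqueness, so it remains to exhibit one sign change. First I would evaluate $\vartheta'$ at $0$: putting $t=0$ in \eqref{E:thetaprime} and using the identity $\sum_{k\ge1}\bigl(\tfrac1k-\tfrac1{k+1/4}\bigr)=4-3\log2-\tfrac\pi2$ established in the proof of the expansion \eqref{E:thetaTotal} (so that $\sum_{k\ge1}\bigl(\tfrac1{2k}-\tfrac2{4k+1}\bigr)=2-\tfrac32\log2-\tfrac\pi4$), the series collapses and one obtains $\vartheta'(0)=-\tfrac12\bigl(\gamma+\log\pi+3\log2+\tfrac\pi2\bigr)<0$. (Equivalently, differentiate \eqref{E:thetaTotal} directly and note that every summand of the resulting $\sum_{k\ge0}\bigl(\tfrac2{4k+1}-\tfrac{2(4k+1)}{(4k+1)^2+4t^2}\bigr)$ vanishes at $t=0$.) Next I would let $t\to+\infty$ in \eqref{E:thetaprime}: the term $\tfrac2{1+4t^2}\to0$, while each quantity $\tfrac1{2k}-\tfrac{2(4k+1)}{(4k+1)^2+4t^2}$ is positive (since $\tfrac2{4k+1}<\tfrac1{2k}$) and increases monotonically to $\tfrac1{2k}$ as $t\uparrow\infty$, so by monotone convergence $\sum_{k\ge1}\bigl(\tfrac1{2k}-\tfrac{2(4k+1)}{(4k+1)^2+4t^2}\bigr)\to\sum_{k\ge1}\tfrac1{2k}=+\infty$; hence $\vartheta'(t)\to+\infty$. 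By the intermediate value theorem $\vartheta'$ vanishes at some $a_\vartheta\in(0,+\infty)$, and this point is unique because $\vartheta'$ is strictly increasing. (Alternatively, one may invoke the unconditional fact that $\vartheta(t)\to+\infty$: combined with convexity and $\vartheta'(0)<0$, it forces $\vartheta'$ to change sign exactly once, since $\vartheta'<0$ throughout $(0,\infty)$ would make $\vartheta$ decreasing.)

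I expect the only points needing any care to be the two interchange-of-limit steps — term-by-term differentiation for the formula for $\vartheta''$, and monotone convergence for the limit $\vartheta'(t)\to+\infty$ — both of which are routine given the uniform and monotone bounds above; the rest is bookkeeping with the series in \eqref{E:thetaprime}.
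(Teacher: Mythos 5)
Your proof is correct and follows essentially the same route as the paper: differentiate the series \eqref{E:thetaprime} term by term to obtain $\vartheta''(t)=16t\sum_{k\ge0}(4k+1)/((4k+1)^2+4t^2)^2>0$ for $t>0$, whence convexity and strict monotonicity of $\vartheta'$. The paper stops there with ``from which the Corollary follows'' (deferring details to \cite{Lm}), so your explicit verification that $\vartheta'(0)=-\frac12(\gamma+\log\pi+3\log2+\frac{\pi}{2})<0$ and $\vartheta'(t)\to+\infty$ simply supplies the endpoint analysis the paper leaves implicit.
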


By differentiation of \eqref{E:thetaprime} we get
\begin{equation}
\vartheta''(t)=16t\sum_{k=0}^\infty \frac{4k+1}{((4k+1)^2+4t^2)^2}
\end{equation}
from which the Corollary follows.

This Corollary is proved in \cite{Lm}*{Lemma 11, Lemma 12}. We have
\[a_\vartheta= 
6.28983\,59888\,36902\,77966\,50901\,00821\,85339\,66583\,12945\,19278\,95453\,
57765\,\dots\]

\section{The function $\kappa(t)$.}\label{S:kappa}

The next Proposition is included in  Titchmarsh
\cite{T}*{p.~291}, but we write the proof below, because we 
are also interested in the  formulas used. 

\begin{proposition}\label{P:doblezeros}
If $\zeta'(\medio+ia)=0$ for a real $a$, then
$\zeta(\medio+ia)=0$.
\end{proposition}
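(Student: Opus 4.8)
The plan is to exploit the decomposition $\zeta(\medio+it)=Z(t)e^{-i\vartheta(t)}$ of Example \ref{exampleone}, in which $Z$ and $\vartheta$ are real valued and real analytic. Differentiating this identity with respect to the real variable $t$ and using $\frac{d}{dt}\zeta(\medio+it)=i\,\zeta'(\medio+it)$, one gets
\begin{equation*}
i\,\zeta'(\medio+it)=\bigl(Z'(t)-i\,\vartheta'(t)Z(t)\bigr)e^{-i\vartheta(t)},
\qquad\text{equivalently}\qquad
\zeta'(\medio+it)=-\bigl(\vartheta'(t)Z(t)+i\,Z'(t)\bigr)e^{-i\vartheta(t)}.
\end{equation*}
(The same relation can also be read off from Proposition \ref{phaseint} applied to $f(t)=\zeta(\medio+it)$, whose phase is $-\vartheta(t)$.) Since the paper announces an interest in the formulas used, I would record this identity explicitly.

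Now suppose $\zeta'(\medio+ia)=0$ for some real $a$. Because $e^{-i\vartheta(a)}\ne0$, the displayed identity forces $\vartheta'(a)Z(a)+i\,Z'(a)=0$, and since $\vartheta'(a)$, $Z(a)$ and $Z'(a)$ are all real, this splits into the two real equations $\vartheta'(a)Z(a)=0$ and $Z'(a)=0$. By \eqref{E:thetaprime} we have $\vartheta'(0)<0$, and $\vartheta'$ is even, so Corollary \ref{convexity} shows that the only real zeros of $\vartheta'$ are $\pm a_\vartheta$. Hence, as soon as $a\ne\pm a_\vartheta$, we obtain $Z(a)=0$, and therefore $\zeta(\medio+ia)=Z(a)e^{-i\vartheta(a)}=0$, which is the assertion.

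The one step needing separate care --- and the only genuine obstacle --- is the exceptional value $a=\pm a_\vartheta$, where the argument above yields only $Z'(a)=0$ and not $Z(a)=0$. Here I would simply check that this case does not occur: $a_\vartheta=6.2898\ldots$ lies far below the ordinate $\gg_1=14.13\ldots$ of the first zero of $\zeta(s)$ on the critical line, and a direct numerical evaluation shows $Z'(a_\vartheta)\ne0$, equivalently $\zeta'(\medio+i\,a_\vartheta)\ne0$; the corresponding statement at $-a_\vartheta$ then follows from the symmetry $\zeta'(\medio-it)=\overline{\zeta'(\medio+it)}$. Consequently the hypothesis $\zeta'(\medio+ia)=0$ can hold only for $a\ne\pm a_\vartheta$, and the proof is complete.
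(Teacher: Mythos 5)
Your proof is correct and follows essentially the same route as the paper: differentiate $\zeta(\medio+it)=Z(t)e^{-i\vartheta(t)}$ to get $e^{i\vartheta(t)}\zeta'(\medio+it)=-\vartheta'(t)Z(t)-iZ'(t)$, conclude $\vartheta'(a)Z(a)=0$ at a zero of $\zeta'$, and dispose of the exceptional point $\pm a_\vartheta$ by the numerical fact that $\zeta'(\medio+ia_\vartheta)\ne0$ (equivalently $Z'(a_\vartheta)\ne0$). Your explicit separation into the two real equations $\vartheta'(a)Z(a)=0$ and $Z'(a)=0$ is a slightly more detailed bookkeeping of the same identity the paper uses.
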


\begin{proof}
We start from $\zeta(\medio+it)=e^{-i\vartheta(t)}Z(t)$.
Differentiation  with respect to $t$ yields
\begin{displaymath}
i\zeta'(\medio+it)=-i\vartheta'(t)e^{-i\vartheta(t)}Z(t)+e^{-i\vartheta(t)}Z'(t).
\end{displaymath}
Multiplying this by $-ie^{i\vartheta(t)}$ we get
\begin{equation}\label{zetavartheta}
e^{i\vartheta(t)}\zeta'(\medio+it)=-\vartheta'(t)Z(t)-iZ'(t)
\end{equation}
and taking  real parts we obtain 
\begin{equation}
-\vartheta'(t)Z(t)=\Re\bigl\{e^{i\vartheta(t)}\zeta'\bigl(\medio+it\bigr)\bigr\}
\end{equation}
which may also be written as
\begin{equation}\label{E:dos}
-2\vartheta'(t)Z(t)=e^{i\vartheta(t)}\zeta'\bigl(\medio+it\bigr)+
e^{-i\vartheta(t)}\zeta'\bigl(\medio-it\bigr).
\end{equation}
Let us assume that $\zeta'\bigl(\medio+it\bigr)=0$ for some real $t$. Since
$\zeta'(\frac12-it)=0$ we may assume that $t>0$ and   we get $\vartheta'(t)Z(t)=0$.   Since $\vartheta'(t)=0$ only for
$t=a_\vartheta\approx 6.29$ where $\zeta'(\frac12+ia_\vartheta)\ne0$, we get $Z(t)=0$.
Therefore $\zeta'\bigl(\medio+it\bigr)=0$ implies $\zeta\bigl(\medio+it\bigr)=0$
\end{proof}

Recall that we denote, as usual, by $\beta_n+i\gamma_n$ the non-trivial zeros of $\zeta(s)$, ordered in such a way that 
$(0<)\ \gamma_1\le\gamma_2\le\cdots $, repeating each term according to its multiplicity.  We will need another
related sequence. 
Let $(0<)\ \gg_1<\gg_2<\cdots $ be the sequence of \emph{real} numbers $t$
such that $\zeta(\medio+it)=0$, counted without multiplicities. Hence the $\gg_n$ 
\emph{only 
denote zeros on the critical line}.  If we assume the RH and the simplicity of the zeros, we
would, of course,  have $\gg_n=\gamma_n$.

\begin{proposition}\label{P:uno}
For every real $t\ne\pm\gg_n$ we have
\begin{equation}\label{eq:uno}
1+2\vartheta'(t)\frac{\zeta(\medio+it)}{\zeta'(\medio+it)}=-e^{-2i\vartheta(t)}
\frac{\zeta'(\medio-it)}{\zeta'(\medio+it)}.
\end{equation}
\end{proposition}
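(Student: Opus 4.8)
The plan is to read off the identity from equation~\eqref{E:dos}, which was already established in the proof of Proposition~\ref{P:doblezeros}. Recall that \eqref{E:dos} reads
\[-2\vartheta'(t)Z(t)=e^{i\vartheta(t)}\zeta'\bigl(\medio+it\bigr)+e^{-i\vartheta(t)}\zeta'\bigl(\medio-it\bigr),\]
and that by \eqref{zetafase} we have $\zeta(\medio+it)=Z(t)e^{-i\vartheta(t)}$, i.e.\ $Z(t)=e^{i\vartheta(t)}\zeta(\medio+it)$, with $\vartheta(t)$ real.

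First I would check that \eqref{eq:uno} even makes sense, namely that the denominator $\zeta'(\medio+it)$ does not vanish. If $\zeta'(\medio+it)=0$ for some real $t$, then by Proposition~\ref{P:doblezeros} also $\zeta(\medio+it)=0$, so $t\in\{\pm\gg_n\}$. Hence for $t\ne\pm\gg_n$ we have $\zeta'(\medio+it)\ne0$, so both fractions in \eqref{eq:uno} are well defined.

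Next I would substitute $Z(t)=e^{i\vartheta(t)}\zeta(\medio+it)$ into \eqref{E:dos}, obtaining
\[-2\vartheta'(t)e^{i\vartheta(t)}\zeta(\medio+it)=e^{i\vartheta(t)}\zeta'\bigl(\medio+it\bigr)+e^{-i\vartheta(t)}\zeta'\bigl(\medio-it\bigr),\]
and then divide through by $e^{i\vartheta(t)}\zeta'(\medio+it)$, which is legitimate precisely because $t\ne\pm\gg_n$. This gives
\[-2\vartheta'(t)\frac{\zeta(\medio+it)}{\zeta'(\medio+it)}=1+e^{-2i\vartheta(t)}\frac{\zeta'(\medio-it)}{\zeta'(\medio+it)}.\]
Transposing the two terms not involving $\zeta(\medio+it)/\zeta'(\medio+it)$ yields exactly \eqref{eq:uno}.

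There is essentially no obstacle: the entire content is the algebraic rearrangement above together with the observation, supplied by Proposition~\ref{P:doblezeros}, that the real zeros of $\zeta'(\medio+it)$ are among the $\pm\gg_n$. The only point worth a remark is that $t=0$ is not excluded (it is not of the form $\pm\gg_n$, and $\zeta(\medio)\ne0$), so the identity is valid there as well; one checks directly from \eqref{E:dos} at $t=0$ that both sides equal $-1$.
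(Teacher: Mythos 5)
Your proposal is correct and follows essentially the same route as the paper: both start from \eqref{E:dos}, use $\zeta(\medio+it)=e^{-i\vartheta(t)}Z(t)$ to eliminate $Z(t)$, invoke Proposition \ref{P:doblezeros} to justify that $\zeta'(\medio+it)\ne0$ for $t\ne\pm\gg_n$, and divide. The only cosmetic difference is that the paper first multiplies \eqref{E:dos} by $e^{-i\vartheta(t)}$ and then divides by $\zeta'(\medio+it)$, whereas you divide by $e^{i\vartheta(t)}\zeta'(\medio+it)$ in one step.
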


\begin{proof}

Multiplying \eqref{E:dos} by $e^{-i\vartheta(t)}$ we get
\begin{displaymath}
-2\vartheta'(t)\zeta(\medio+it)=\zeta'\bigl(\medio+it\bigr)+
e^{-2i\vartheta(t)}\zeta'\bigl(\medio-it\bigr).
\end{displaymath}
Since $t\ne \pm\gg_n$, and using Proposition \ref{P:doblezeros} we have 
$\zeta'(\medio+it)\ne0$, so that we can
divide by $\zeta'(\medio+it)$ and obtain our result.
\end{proof}

\begin{proposition}
There exists a unique real analytic function $\kappa\colon\R\to\R$
such that
\begin{equation}\label{E:defk}
e^{2\pi i
\kappa(t)}=1+2\vartheta'(t)\frac{\zeta(\medio+it)}{\zeta'(\medio+it)},\qquad
\kappa(0)=-\medio.
\end{equation}
\end{proposition}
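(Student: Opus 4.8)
The plan is to apply the general existence result for a real analytic phase, Corollary~\ref{P:argcor} (or Proposition~\ref{P:ph}), to the function on the right-hand side of \eqref{eq:uno}. By Proposition~\ref{P:uno}, the quantity $1+2\vartheta'(t)\zeta(\medio+it)/\zeta'(\medio+it)$ equals $-e^{-2i\vartheta(t)}\zeta'(\medio-it)/\zeta'(\medio+it)$ for every real $t\ne\pm\gg_n$; the apparent singularities at $t=\pm\gg_n$ of the left-hand expression are removable, since by Proposition~\ref{P:doblezeros} a zero of $\zeta'(\medio\pm it)$ forces a zero of $\zeta(\medio\pm it)$ of at least the same order, so the right-hand side extends $f(t)$ to a function that is real analytic on all of $\R$. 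Moreover $f(t)$ never vanishes: $|f(t)|=|\zeta'(\medio-it)/\zeta'(\medio+it)|=1$ because $\zeta'(\medio-it)=\overline{\zeta'(\medio+it)}$, so in fact $f\colon\R\to\C$ takes values on the unit circle and is never $0$.

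Next I would invoke Corollary~\ref{P:argcor}: since $f$ is real analytic and nowhere zero, there is a real analytic function $\varphi\colon\R\to\R$ with $f(t)=|f(t)|e^{i\varphi(t)}=e^{i\varphi(t)}$ (using $|f|\equiv1$). Setting $\kappa(t)=\varphi(t)/(2\pi)$ gives a real analytic $\kappa$ with $e^{2\pi i\kappa(t)}=f(t)$, which is exactly \eqref{E:defk} apart from the normalization. To pin down $\kappa(0)=-\medio$: at $t=0$ one has $\vartheta(0)=0$ and $\zeta'(\medio)/\zeta'(\medio)=1$, so $f(0)=-1$ and hence $e^{2\pi i\kappa(0)}=-1$, forcing $\kappa(0)\in\medio+\Z$. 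The remaining ambiguity is that two real analytic determinations of the phase differ by an integer (here an integer, since a shift of $\varphi$ by $2\pi$ shifts $\kappa$ by $1$); so among all such $\kappa$ exactly one satisfies $\kappa(0)=-\medio$, and we select it. This gives both existence and uniqueness.

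The main obstacle — and really the only point requiring care — is the removability of the singularities at $\pm\gg_n$, i.e.\ checking that the two sides of \eqref{eq:uno} genuinely glue into a single real analytic function on $\R$ rather than merely agreeing off a discrete set. The clean way to see this is to work with the right-hand side $g(t)=-e^{-2i\vartheta(t)}\zeta'(\medio-it)/\zeta'(\medio+it)$ directly: $\vartheta$ is entire (real analytic), and although $\zeta'(\medio\pm it)$ may have real zeros, Proposition~\ref{P:doblezeros} together with the fact that if $\zeta(\medio+ia)=0$ with multiplicity $m$ then $\zeta'(\medio+ia)=0$ with multiplicity $m-1$ shows that the zeros of numerator and denominator of $\zeta'(\medio-it)/\zeta'(\medio+it)$ cancel in order, so $g$ is real analytic and nonvanishing on all of $\R$. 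Then one applies Corollary~\ref{P:argcor} to $g$, and Proposition~\ref{P:uno} guarantees that the resulting $\kappa$ also satisfies the identity written in \eqref{E:defk} with the left-hand expression wherever the latter is defined. Everything else is the routine normalization argument of the previous paragraph.
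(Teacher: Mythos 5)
Your proposal is correct and follows essentially the same route as the paper: both use Proposition \ref{P:uno} to see that the right-hand side of \eqref{E:defk} has modulus $1$, Proposition \ref{P:doblezeros} to handle analyticity at the points $\pm\gg_n$, then the existence of a real analytic argument (Corollary \ref{P:argcor}) and the normalization $f(0)=-1$ to fix $\kappa(0)=-\medio$. You are merely more explicit than the paper about why the singularities at $\pm\gg_n$ are removable, which is a welcome elaboration rather than a deviation.
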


\begin{proof}
By Proposition \ref{P:uno} the function $f\colon\R\to\C$ defined by
\begin{displaymath}
f(t)=1+2\vartheta'(t)\frac{\zeta(\medio+it)}{\zeta'(\medio+it)}
\end{displaymath}
satisfies $|f(t)|=1$ for $t\ne \gg_n$. By definition, and
Proposition \ref{P:doblezeros}, $f$ is real analytic and satisfies
$|f(\gg)|=1$, so that there exists  a real analytic
$\kappa\colon\R\to\R$ such that $f(t)=e^{2\pi i \kappa(t)}$. This
function is uniquely defined by its value at any one point. Since
$\vartheta(0)=0$  we have $f(0)=-1$ (see \eqref{eq:uno}) and we can take
$\kappa(0)=-\medio$.
\end{proof}

Applying Proposition \ref{P:ph} to $\zeta'(\medio+it)$ we arrive at
two real analytic functions $\rho\colon\R\to\R$ and
$\ph\zeta'(\medio+it)$. Observing that $\zeta'(\medio)<0$ we may choose 
\begin{displaymath}
\zeta'(\medio+it)=\rho(t)e^{i\ph\zeta'(\frac12+it)}, \quad
\rho(0)=|\zeta'(\medio)|,\quad \ph\zeta'(\medio)=\pi.
\end{displaymath}
If we assume that $\zeta(s)$ has no multiple zero  on the
critical line, then  $\zeta'(\medio+it)\ne0$ and we will have
$\rho(t)=|\zeta'(\medio+it)|$ and
$\ph\zeta'(\medio+it)=\arg\zeta'(\medio+it)$
(where $\arg\zeta'(\medio+it)$ is meant to be a continuous function of $t$
in $\R$).

\begin{figure}[H]
  \includegraphics[width=\hsize]{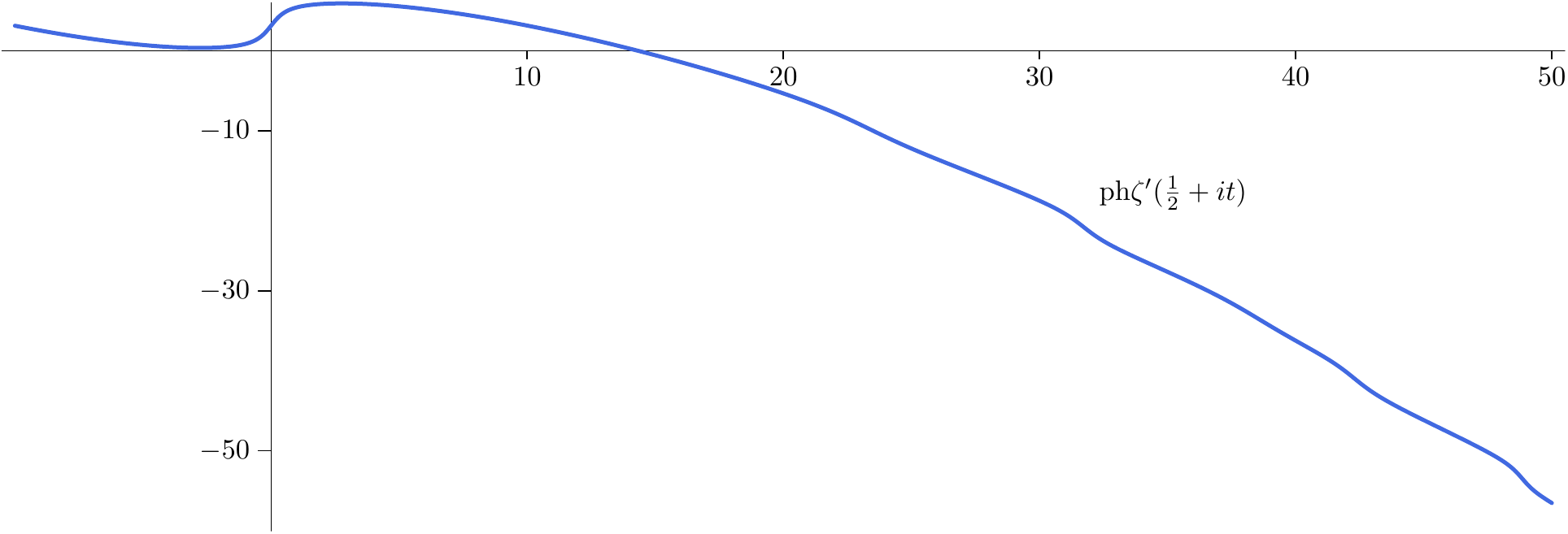}
  \caption{$\ph\zeta'(\medio+it)$}
  \label{F:phasezetap2}
\end{figure}

\begin{proposition} For all $t\in\R$
\begin{equation}\label{E:kappa1}
\kappa(t)=\frac{1}{2}-\frac{1}{\pi}\bigl(\vartheta(t)+\ph
\zeta'(\medio+it)\bigr).
\end{equation}
\end{proposition}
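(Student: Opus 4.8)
The plan is to compare the two real analytic functions on both sides of \eqref{E:kappa1} by showing they agree at $t=0$ and have the same derivative everywhere. First I would rewrite the defining relation \eqref{E:defk} together with Proposition \ref{P:uno} as
\[
e^{2\pi i\kappa(t)}=-e^{-2i\vartheta(t)}\frac{\zeta'(\medio-it)}{\zeta'(\medio+it)},
\]
valid for $t\ne\pm\gg_n$, and hence (by real analyticity and Proposition \ref{P:doblezeros}) for all $t\in\R$. Using the decomposition $\zeta'(\medio+it)=\rho(t)e^{i\ph\zeta'(\frac12+it)}$ fixed just above, and noting that $\rho$ is real so that $\zeta'(\medio-it)=\overline{\zeta'(\medio+\overline{it})}$-type conjugation symmetry gives $\zeta'(\medio-it)=\rho(t)e^{-i\ph\zeta'(\frac12+it)}$ (both sides being real analytic and agreeing where $\rho\ne0$), the quotient $\zeta'(\medio-it)/\zeta'(\medio+it)$ equals $e^{-2i\ph\zeta'(\frac12+it)}$. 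Therefore
\[
e^{2\pi i\kappa(t)}=-e^{-2i\vartheta(t)}e^{-2i\ph\zeta'(\frac12+it)}=e^{i\pi}e^{-2i(\vartheta(t)+\ph\zeta'(\frac12+it))},
\]
so $2\pi\kappa(t)$ and $\pi-2(\vartheta(t)+\ph\zeta'(\medio+it))$ differ by an integer multiple of $2\pi$; since both are real analytic in $t$, that integer is constant.

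Next I would pin down the constant by evaluating at $t=0$. We have $\kappa(0)=-\medio$, $\vartheta(0)=0$, and $\ph\zeta'(\medio)=\pi$ by the normalization chosen above. Plugging in, the left side gives $2\pi\kappa(0)=-\pi$ and the candidate right side gives $\pi-2(0+\pi)=-\pi$, so the two real analytic functions agree at $0$; combined with the previous paragraph they agree identically, which is exactly \eqref{E:kappa1} after dividing by $2\pi$ and rearranging. I should double-check the sign conventions here — in particular that the branch of $\ph\zeta'(\medio+it)$ normalized by $\ph\zeta'(\medio)=\pi$ is the one that makes $-e^{-2i\vartheta(t)}\zeta'(\medio-it)/\zeta'(\medio+it)=e^{2\pi i\kappa(t)}$ hold with $\kappa(0)=-\medio$ rather than $\kappa(0)=\medio$; this is forced, since $1+2\vartheta'(0)\zeta(\medio)/\zeta'(\medio)$ is real and the proof of Proposition \ref{P:defk} already establishes $f(0)=-1$.

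The main obstacle, such as it is, is the justification of $\zeta'(\medio-it)=\rho(t)e^{-i\ph\zeta'(\frac12+it)}$ as an identity of real analytic functions rather than merely a pointwise equality up to sign. The clean way is to invoke the functional-equation symmetry $\overline{\zeta(\medio+it)}=\zeta(\medio-it)$ for real $t$, differentiate to get $\overline{\zeta'(\medio+it)}=\zeta'(\medio-it)$, and then observe that both $\zeta'(\medio-it)$ and $\rho(t)e^{-i\ph\zeta'(\frac12+it)}$ are real analytic extensions of the same function off the real zeros; by Proposition \ref{P:ph} (uniqueness of the phase/signed-modulus decomposition up to the stated signs) and the fact that they agree at $t=0$, they coincide. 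Alternatively one can bypass $\rho$ entirely and argue directly with phases: differentiate $\kappa$, use Proposition \ref{phaseint} to write $\ph\zeta'(\medio+it)$ and $-\vartheta(t)$ as integrals of imaginary parts of logarithmic derivatives, and match the integrands — but the algebraic identity above is shorter and I would present that.
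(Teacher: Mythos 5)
Your proof is correct and follows essentially the same route as the paper: combine \eqref{E:defk} with Proposition \ref{P:uno}, rewrite $\zeta'(\medio-it)/\zeta'(\medio+it)$ as $e^{-2i\ph\zeta'(\frac12+it)}$ via the decomposition $\zeta'(\medio+it)=\rho(t)e^{i\ph\zeta'(\frac12+it)}$, conclude that the two sides differ by a constant integer multiple of $2\pi$ by continuity, and fix that constant at $t=0$ using $\kappa(0)=-\medio$, $\vartheta(0)=0$, $\ph\zeta'(\medio)=\pi$. The only cosmetic remark is that the identity $\zeta'(\medio-it)=\overline{\zeta'(\medio+it)}=\rho(t)e^{-i\ph\zeta'(\frac12+it)}$ is immediate from Schwarz reflection (realness of $\zeta$ on the real axis) and the realness of $\rho$ and $\ph\zeta'$, so the extra analytic-continuation justification you sketch is not needed.
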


\begin{proof}
By the definition of $\kappa(t)$ and \eqref{eq:uno} we have
\begin{displaymath}
\exp\bigl(2\pi i
\kappa(t)\bigr)=-e^{-2i\vartheta(t)}\frac{\zeta'(\medio-it)}{\zeta'(\medio+it)}
=\exp\bigl(\pi i-2i\vartheta(t)-2i\ph\zeta'(\medio+it)\bigr).
\end{displaymath}
Hence there exists an integer $n$ such that
\begin{displaymath}
2\pi i \kappa(t)=\pi i-2i\vartheta(t)-2i\ph\zeta'(\medio+it)+2\pi i
n.
\end{displaymath}
For $t=0$ we get $n=0$ and \eqref{E:kappa1} follows.
\end{proof}

\begin{corollary} For every real $t$ we have
\begin{equation}\label{E:kappa}
\kappa(t)=-\frac{1}{2}-\frac{1}{\pi}\int_0^t\Bigl(\vartheta'(x)+
\Re\frac{\zeta''(\medio+ix)}{\zeta'(\medio+ix)}\Bigr)\,dx.
\end{equation}
\end{corollary}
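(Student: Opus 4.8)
The plan is to obtain \eqref{E:kappa} simply by feeding the integral representation of the phase (Proposition~\ref{phaseint}) into the closed form \eqref{E:kappa1} for $\kappa(t)$. First I would write $\vartheta(t)=\int_0^t\vartheta'(x)\,dx$, which is legitimate because $\vartheta$ is real analytic and $\vartheta(0)=0$ (one could equally cite Example~\ref{exampletheta}).

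Next I would apply Proposition~\ref{phaseint} to the non-constant real analytic function $g(t)=\zeta'(\medio+it)$. Since $g'(t)=i\,\zeta''(\medio+it)$, we have $\frac{g'(t)}{g(t)}=i\,\frac{\zeta''(\medio+it)}{\zeta'(\medio+it)}$, hence $\Im\frac{g'(t)}{g(t)}=\Re\frac{\zeta''(\medio+it)}{\zeta'(\medio+it)}$. With the normalization $\ph\zeta'(\medio)=\pi$ fixed above, Proposition~\ref{phaseint} then gives
\[
\ph\zeta'(\medio+it)=\pi+\int_0^t\Re\frac{\zeta''(\medio+ix)}{\zeta'(\medio+ix)}\,dx .
\]
Substituting both identities into \eqref{E:kappa1} and collecting the constants ($\frac12-\frac{\pi}{\pi}=-\frac12$) together with the two integrals yields exactly \eqref{E:kappa}. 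Equivalently, one may differentiate \eqref{E:kappa1}, use $(\ph\zeta'(\medio+it))'=\Re\frac{\zeta''(\medio+it)}{\zeta'(\medio+it)}$, and integrate back from $0$ using $\kappa(0)=-\medio$; the two routes are the same computation.

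The one point to keep an eye on is that $g(t)=\zeta'(\medio+it)$ really can vanish for real $t$---precisely at those $\gg_n$ that are multiple zeros of $\zeta(s)$, by Proposition~\ref{P:doblezeros}---so a priori the integrand $\Re\frac{\zeta''(\medio+ix)}{\zeta'(\medio+ix)}$ in \eqref{E:kappa} is not defined at such points. This is, however, exactly the scenario handled by Proposition~\ref{phaseint}: it is stated for an arbitrary non-constant real analytic $f$, and its proof shows that $\Im\frac{f'(x)}{f(x)}$ extends to a real analytic function on all of $\R$, the possible singularities at the zeros of $f$ being removable. So the integral in \eqref{E:kappa} is to be read with that understanding, and no further argument is needed. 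I do not anticipate a genuine obstacle here: the corollary is an immediate consequence of \eqref{E:kappa1} and Proposition~\ref{phaseint}.
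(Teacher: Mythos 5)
Your proof is correct and follows exactly the route of the paper: substitute the integral representation of $\ph\zeta'(\medio+it)$ from Proposition~\ref{phaseint} (with the normalization $\ph\zeta'(\medio)=\pi$) into \eqref{E:kappa1} and write $\vartheta(t)=\int_0^t\vartheta'(x)\,dx$. Your remark on the removable singularities of $\Re\frac{\zeta''(\medio+ix)}{\zeta'(\medio+ix)}$ at multiple zeros is a correct and welcome clarification of a point the paper leaves implicit.
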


\begin{proof}
In formula \eqref{E:kappa1}, we replace $\ph\zeta'(\medio+it)$
by the integral expression given by \eqref{E:phase}.
\end{proof}

Observing that $-\vartheta(t)$ is the phase of $\zeta(\frac12+it)$ we  also
obtain  (see \eqref{intvartheta})
\begin{equation}\label{kappanature}
\kappa(t)=-\frac{1}{2}+\frac{1}{\pi}\int_0^t\Re\Bigl(\frac{\zeta'(\frac12+ix)}{
\zeta(\frac12+ix)}
-\frac{\zeta''(\frac12+ix)}{\zeta'(\frac12+ix)}\Bigr)\,dx.
\end{equation}

\begin{figure}[H]
  \includegraphics[width=\hsize]{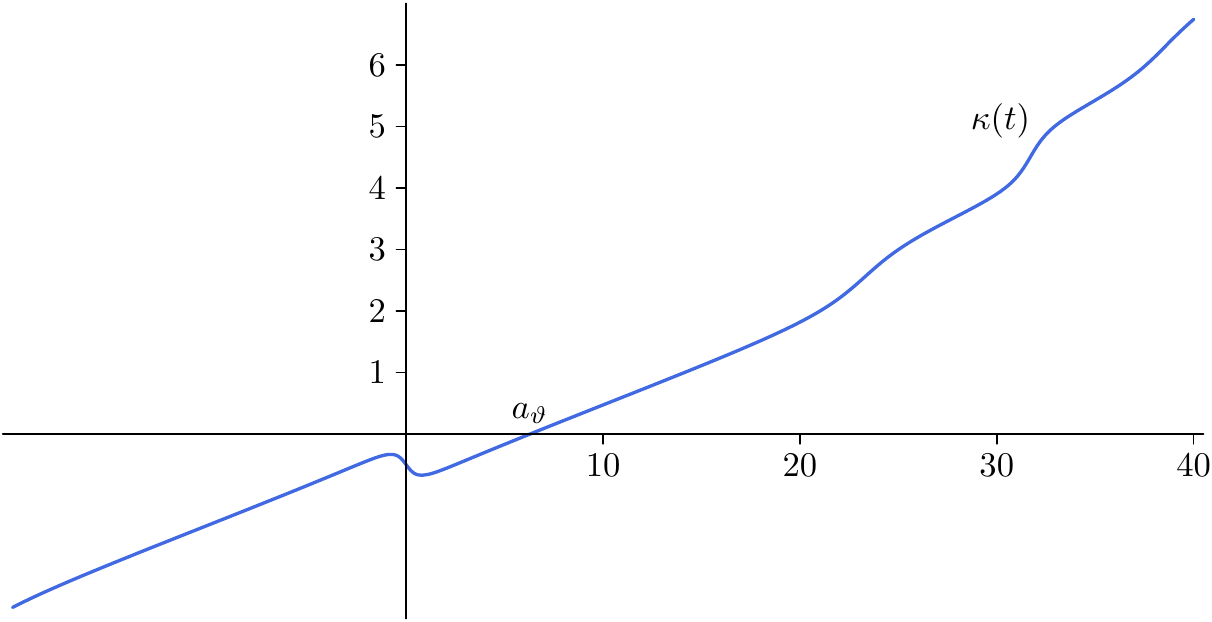}
  \caption{$\kappa(t)$}
  \label{F:F1}
\end{figure}
From \eqref{kappanature} we see that $\kappa(t)+\frac12$ is an odd function.

%
%

\begin{proposition}\label{anotherform}
Choosing  the phase of the real analytic function 
$t\mapsto Z'(t)-Z(t)\vartheta'(t)$ to be $=\pi/2$ at $t=0$ we will have
\begin{equation}\label{kappaph}
\kappa(t)=-\frac{1}{\pi}\ph(Z'(t)-iZ(t)\,\vartheta'(t)).
\end{equation}
\end{proposition}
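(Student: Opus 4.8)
The plan is to read the formula off from the identity \eqref{zetavartheta} together with the representation \eqref{E:kappa1} of $\kappa(t)$; the only genuine work lies in bookkeeping the additive $\pi$-ambiguity inherent in the notion of phase.

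First I would record the algebraic identity that drives everything. Put $g(t):=Z'(t)-iZ(t)\,\vartheta'(t)$, a non-constant real analytic function $\R\to\C$ (its real part is $Z'$), and fix its phase by the normalization $\ph g(0)=\mediopi$ required in the statement. Since
\[
-\vartheta'(t)Z(t)-iZ'(t)=-i\bigl(Z'(t)-iZ(t)\,\vartheta'(t)\bigr)=-i\,g(t),
\]
equation \eqref{zetavartheta} can be rewritten as
\[
e^{i\vartheta(t)}\zeta'(\medio+it)=-i\,g(t)=e^{-i\mediopi}\,g(t).
\]

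Next I would compare the two phase decompositions of $h(t):=e^{i\vartheta(t)}\zeta'(\medio+it)$ that this produces. On one hand, using the normalized phase $\zeta'(\medio+it)=\rho(t)e^{i\ph\zeta'(\medio+it)}$ (with $\rho(0)=|\zeta'(\medio)|$, $\ph\zeta'(\medio)=\pi$) gives $h(t)=\rho(t)\,e^{i(\vartheta(t)+\ph\zeta'(\medio+it))}$; on the other, writing $g=Ue^{i\ph g}$ with $U$ real analytic gives $h(t)=U(t)\,e^{i(\ph g(t)-\mediopi)}$. Both are representations of the kind described in Proposition \ref{P:ph}, so there is an integer $k$ with
\[
\vartheta(t)+\ph\zeta'(\medio+it)=\ph g(t)-\mediopi+k\pi\qquad(t\in\R).
\]
To determine $k$ I would evaluate at $t=0$. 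From \eqref{zetavartheta} at $t=0$ and the reality of $\zeta'(\medio)$, $\vartheta'(0)$, $Z(0)$, one gets $Z'(0)=0$ and $\zeta'(\medio)=-\vartheta'(0)Z(0)$, hence $g(0)=-iZ(0)\vartheta'(0)=i\zeta'(\medio)$; since $\zeta'(\medio)<0$, the value $\ph g(0)=\mediopi$ is admissible (with $U(0)=\zeta'(\medio)<0$), and the displayed relation at $t=0$ reads $0+\pi=\mediopi-\mediopi+k\pi$, so $k=1$. Feeding the resulting identity $\vartheta(t)+\ph\zeta'(\medio+it)=\ph g(t)+\mediopi$ into \eqref{E:kappa1} gives
\[
\kappa(t)=\medio-\frac1\pi\bigl(\vartheta(t)+\ph\zeta'(\medio+it)\bigr)=\medio-\frac1\pi\bigl(\ph g(t)+\mediopi\bigr)=-\frac1\pi\ph g(t),
\]
which is \eqref{kappaph}. (The integer $k$ could equally be pinned down from $\kappa(0)=-\medio$.)

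There is no serious obstacle here; the only step that needs care is the determination of $k$, for which one must identify $g(0)=i\zeta'(\medio)$ and invoke $\zeta'(\medio)<0$. It is also worth noting that $g$ may vanish at real points --- precisely at the zeros of $\zeta'(\medio+it)$, by \eqref{zetavartheta} and Proposition \ref{P:doblezeros} --- which is why one appeals to Proposition \ref{P:ph} rather than to Corollary \ref{P:argcor}.
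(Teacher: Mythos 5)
Your proof is correct and follows essentially the same route as the paper's: both rewrite \eqref{zetavartheta} as $e^{i\vartheta(t)}\zeta'(\medio+it)=-i\bigl(Z'(t)-iZ(t)\vartheta'(t)\bigr)$, deduce $\vartheta(t)+\ph\zeta'(\medio+it)=\mediopi+\ph\bigl(Z'(t)-iZ(t)\vartheta'(t)\bigr)$ by pinning down the additive constant at $t=0$ via $g(0)=i\zeta'(\medio)=-i\,3.92264\dots$, and then substitute into \eqref{E:kappa1}. You merely make explicit the phase-ambiguity bookkeeping (via Proposition \ref{P:ph}) that the paper's one-line proof leaves implicit.
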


\begin{proof} Choosing appropriately the phase of  $(Z'-iZ\vartheta')$, it follows from
\eqref{zetavartheta} that 
\begin{displaymath}
\vartheta(t)+\ph\zeta'(\medio+it)=\frac{\pi}{2}+\ph(Z'(t)-iZ(t)\vartheta'(t)).
\end{displaymath}
Thus, by \eqref{E:kappa1}
\begin{displaymath}
\kappa(t)=-\frac{1}{\pi}\ph(Z'(t)-iZ(t)\vartheta'(t)).
\end{displaymath}
(We have $Z'(0)-iZ(0)\vartheta'(0)=-i\,3.92264\dots$, so that we have 
to take the phase of $(Z'-iZ\vartheta')$ equal to $\pi/2$ at $t=0$.)
\end{proof}

\begin{proposition}\label{katheta}
We have $\kappa(a_\vartheta)=0$.
\end{proposition}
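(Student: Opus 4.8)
The plan is to read $\kappa(a_\vartheta)$ off Proposition~\ref{anotherform}. Set $w(t):=Z'(t)-iZ(t)\vartheta'(t)$, so $\kappa(t)=-\tfrac1\pi\ph w(t)$ with the phase normalised by $\ph w(0)=\pi/2$. Since $\vartheta'(a_\vartheta)=0$ by Corollary~\ref{convexity}, $w(a_\vartheta)=Z'(a_\vartheta)\in\R$. Moreover $Z'(a_\vartheta)\ne0$: otherwise \eqref{zetavartheta} gives $e^{i\vartheta(a_\vartheta)}\zeta'(\medio+ia_\vartheta)=-\vartheta'(a_\vartheta)Z(a_\vartheta)-iZ'(a_\vartheta)=0$, hence $\zeta'(\medio+ia_\vartheta)=0$, hence $\zeta(\medio+ia_\vartheta)=0$ by Proposition~\ref{P:doblezeros}, i.e.\ $a_\vartheta=\gg_n$ for some $n$, impossible since $a_\vartheta\approx6.29<\gg_1$. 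So $w(a_\vartheta)$ is a nonzero real number; since the phase of a real analytic function at a point where its value is real and nonzero lies in $\pi\Z$, this already gives $\kappa(a_\vartheta)\in\Z$, and it remains only to identify the integer.

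To do that I would follow the path $t\mapsto w(t)$ on $[0,a_\vartheta]$. As $a_\vartheta<\gg_1$, $Z$ has no zero on $[0,a_\vartheta]$, and since $Z(0)=\zeta(\medio)<0$ we get $Z(t)<0$ throughout; by Corollary~\ref{convexity}, $\vartheta'(t)<0$ for $0<t<a_\vartheta$. Hence $\Im w(t)=-Z(t)\vartheta'(t)<0$ for $0<t<a_\vartheta$, while $\Im w(a_\vartheta)=0$; and $w$ never vanishes on $[0,a_\vartheta]$ (the interior because $\Im w\ne0$ there, $t=0$ because $w(0)=-i\,3.92\dots$ by Proposition~\ref{anotherform}, $t=a_\vartheta$ because $Z'(a_\vartheta)\ne0$). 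Thus $w$ is a non-vanishing path lying in the open lower half-plane for $0\le t<a_\vartheta$ (it sits on the negative imaginary axis at $t=0$) and meeting the real axis only at the endpoint. Consequently the real analytic modulus $U$ in $w=Ue^{i\ph w}$ keeps the sign of $U(0)<0$ (note $\ph w(0)=\pi/2$ whereas $w(0)\in i\R_{<0}$), so $\ph w(t)=\arg w(t)+\pi$ where $\arg w$ is the continuous argument with $\arg w(0)=-\pi/2$. Since $\arg w$ stays in $(-\pi,0)$ on $[0,a_\vartheta)$ and $w(a_\vartheta)$ is real, $\arg w(a_\vartheta)\in\{-\pi,0\}$, i.e.\ $\ph w(a_\vartheta)\in\{0,\pi\}$; thus $\kappa(a_\vartheta)\in\{0,-1\}$, with $\kappa(a_\vartheta)=0$ precisely when $Z'(a_\vartheta)<0$ and $\kappa(a_\vartheta)=-1$ when $Z'(a_\vartheta)>0$.

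The single quantitative point — and the real obstacle — is therefore to establish $Z'(a_\vartheta)<0$. Since $Z$ is even one has $Z'(0)=0$, and differentiating \eqref{zetavartheta} at $t=0$ (using $\vartheta''(0)=0$) gives $Z''(0)=-\vartheta'(0)\zeta'(\medio)-\zeta''(\medio)<0$, so $t=0$ is a strict local maximum of $Z$ and $Z'(t)<0$ for small $t>0$; it then remains to check that $Z'$ has no zero on $(0,a_\vartheta]$, i.e.\ that the unique critical point of $Z$ in $(0,\gg_1)$ lies to the right of $a_\vartheta$. This is a finite verification in which the explicit value $a_\vartheta\approx6.28983\dots$ enters; alternatively, knowing already that $\kappa(a_\vartheta)\in\Z$, it suffices to estimate $\kappa(a_\vartheta)$ to within $\tfrac12$, e.g.\ via \eqref{E:kappa1} by tracking $\ph\zeta'(\medio+it)$ continuously from $\ph\zeta'(\medio)=\pi$ as $t$ ranges over $[0,a_\vartheta]$, which lands comfortably inside $(-\tfrac12,\tfrac12)$ and forces $\kappa(a_\vartheta)=0$. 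Everything else in the argument is formal.
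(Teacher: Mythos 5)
Your reduction is correct and is essentially the paper's own argument: both track the phase of $w(t)=Z'(t)-iZ(t)\vartheta'(t)$ along $[0,a_\vartheta]$ via Proposition~\ref{anotherform}, use $Z(t)<0$ and $\vartheta'(t)<0$ to keep $w$ in the lower half-plane, and conclude $\kappa(a_\vartheta)\in\{0,-1\}$, with the value $0$ exactly when $Z'(a_\vartheta)<0$. The paper settles that last point by simply asserting $Z'(t)<0$ just to the left of $a_\vartheta$ (numerically $Z'(a_\vartheta)=-0.18838\dots$, quoted in the proof of Proposition~\ref{midvalues}), so up to this point you and the paper coincide.

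The gap is in your proposed verification of $Z'(a_\vartheta)<0$, and it is not merely a detail: the step as described would fail. First, the sign of $Z''(0)$ is the opposite of what you claim: with $\vartheta'(0)\approx-2.686$, $\zeta'(\medio)\approx-3.9226$ and $\zeta''(\medio)\approx-16.01$ one finds $Z''(0)=-\vartheta'(0)\zeta'(\medio)-\zeta''(\medio)\approx-10.54+16.01>0$, so $t=0$ is a strict local \emph{minimum} of $Z$ and $Z'(t)>0$ for small $t>0$. Second, $Z'$ \emph{does} vanish in $(0,a_\vartheta)$: the paper itself records that $t=2.4757266\dots$ solves $Z'(t)=0$ (this is the point $\eta_1$ with $\kappa(\eta_1)=-\frac12$, consistent with Proposition~\ref{midvalues}). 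The true picture is that $Z$ rises from a local minimum at $0$ to a local maximum at $\eta_1\approx2.476$, then decreases through $a_\vartheta\approx6.290$ toward a local minimum at $\eta_2\approx10.212$; so there is no hope of showing that ``$Z'$ has no zero on $(0,a_\vartheta]$''. In the end the sign of $Z'(a_\vartheta)$ must be checked directly (or via your fallback of pinning $\kappa(a_\vartheta)$ down to within $\frac12$), which is exactly the single numerical input the paper's proof also relies on.
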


\begin{proof}
For $z$ not equal to  zero let, as usual, $\Arg
z$ be the determination of the argument of $z$ with $-\pi < \Arg z\le  \pi$.

By Proposition \ref{anotherform}
for every interval $I$ on which $Z(t)\vartheta'(t)\ne0$ there will
exist an integer $n_I$ such that
\begin{displaymath}
\kappa(t)=-\frac{1}{\pi}\Arg(Z'(t)-iZ(t)\vartheta'(t)) +n_I.
\end{displaymath}

In particular this applies to the interval $I=(0,a_\vartheta)$. To
determine $n_I$ in this case observe that $\kappa(0)=-\medio$,
$Z'(0)=0$, $Z(0)<0$ and $\vartheta'(0)<0$, and it follows that $n_I=-1$.

Then choose $\varepsilon>0$ small enough. At the point $t=a_\vartheta-\varepsilon$ we have $Z'(t)<0$,
$Z(t)<0$, and  $\vartheta'(a_\vartheta)=0$. Since $\kappa(t)$
is continuous and $\vartheta'(t)<0$ we get $\Arg(Z'(t)-iZ(t)\vartheta'(t))$ near 
$-\pi$ when $t=a_\vartheta-\varepsilon$. Taking limits for $\varepsilon\to0^+$ we get
$\kappa(a_\vartheta)=0$, as asserted.
\end{proof}

\begin{proposition}\label{P:17}
For each natural number $n$ we have  $\kappa(\gg_n)=n$.
\end{proposition}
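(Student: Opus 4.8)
The plan is to prove $\kappa(\gg_n)=n$ by induction on $n$, using three facts: (i)~$\kappa$ is integer‑valued at every $\gg_n$; (ii)~$\kappa(a_\vartheta)=0$, which is Proposition~\ref{katheta}; and (iii)~$\kappa$ increases by exactly $1$ across the interval $(\gg_n,\gg_{n+1})$, and also across $(a_\vartheta,\gg_1)$. Granting these, $\kappa(\gg_1)=\kappa(a_\vartheta)+1=1$ and $\kappa(\gg_{n+1})=\kappa(\gg_n)+1$, whence $\kappa(\gg_n)=n$.

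For (i): at $t=\gg_n$ we have $\zeta(\medio+i\gg_n)=0$, and I would first check that the quotient $\zeta(\medio+it)/\zeta'(\medio+it)$ continues analytically across $\gg_n$ with a simple zero there. Differentiating $\zeta(\medio+it)=Z(t)e^{-i\vartheta(t)}$ gives $i\zeta'(\medio+it)=(Z'(t)-iZ(t)\vartheta'(t))e^{-i\vartheta(t)}$, so the quotient equals $iZ(t)/(Z'(t)-iZ(t)\vartheta'(t))$; if $Z$ vanishes to order $m\ge1$ at $\gg_n$ the numerator vanishes to order $m$ while (using $\vartheta'(\gg_n)\ne0$, as $\gg_n>a_\vartheta$) the denominator vanishes to order exactly $m-1$, so the quotient vanishes to order $1$. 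Hence the right‑hand side of \eqref{E:defk} equals $1$ at $t=\gg_n$, so $e^{2\pi i\kappa(\gg_n)}=1$ and $\kappa(\gg_n)\in\Z$ (this incidentally gives $\kappa'(\gg_n)\ne0$).

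For (iii) I would use the representation $\kappa(t)=-\fracpi\ph g(t)$ of Proposition~\ref{anotherform}, where $g(t)=Z'(t)-iZ(t)\vartheta'(t)$, together with the signed‑modulus decomposition $g=Ue^{i\ph g}$, $U,\ph g$ real analytic. Fix one of the intervals $I$ above. On $I$ the real function $Z$ has a constant sign $s=\pm1$, and $\vartheta'>0$ on $I$ because each $\gg_n\ge\gg_1$ exceeds $a_\vartheta$ (numerically $a_\vartheta\approx6.29<14.13\approx\gg_1$) and $\vartheta$ is convex; hence $\Im g=-Z\vartheta'$ has constant sign $-s$ on $I$. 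Therefore $g$ never vanishes on $I$, $U$ has a constant sign $\sigma$ there, and, since $\Im g=U\sin\ph g$ keeps a fixed sign, $\ph g$ is confined to a single interval $(j\pi,(j+1)\pi)$, $j\in\Z$, with $(-1)^j=-s\sigma$. By continuity the endpoint values of $\ph g$ lie in $[\,j\pi,(j+1)\pi\,]$; since $\kappa=-\fracpi\ph g$ they are $-\pi$ times integers — by (i) at a $\gg_m$, and by Proposition~\ref{katheta} at $a_\vartheta$ (where $\ph g(a_\vartheta)=0$) — hence each equals $j\pi$ or $(j+1)\pi$. To decide which, I would examine $\Re g=Z'$ near the two endpoints: from the local expansion of $Z$ at a zero of order $m$ one gets $\operatorname{sgn}Z'=s$ just inside $I$ at the left endpoint and $\operatorname{sgn}Z'=-s$ just inside $I$ at the right endpoint (and for $I=(a_\vartheta,\gg_1)$ the left‑endpoint sign is $\operatorname{sgn}Z'(a_\vartheta)=-1=s$, as established in the proof of Proposition~\ref{katheta}). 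Since $\Re g=U\cos\ph g$, comparing $\operatorname{sgn}(U\cos\ph g)$ with $\cos(j\pi)=(-1)^j=-s\sigma$ and $\cos((j+1)\pi)=s\sigma$ forces $\ph g=(j+1)\pi$ at the left endpoint and $\ph g=j\pi$ at the right endpoint. Thus $\ph g$ drops by exactly $\pi$ across $I$, i.e. $\kappa$ rises by exactly $1$, which is (iii).

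The step I expect to be the main obstacle is accommodating multiple zeros of $\zeta(\medio+it)$ on the critical line, which are not excluded here: at such a $\gg_n$ one has $Z'(\gg_n)=0$, so $g$ vanishes at that endpoint and $\operatorname{sgn}\Re g$ cannot be read off by evaluation but only through the order‑$m$ local expansion $Z(t)=c\,(t-\gg_n)^m+\cdots$; likewise one must know that $\ph g\in\pi\Z$ there, which is exactly why I establish (i) globally from \eqref{E:defk} rather than arguing it pointwise. Everything else is the elementary geometry of a continuous path with real endpoints confined to an open half‑plane.
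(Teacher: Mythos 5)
Your argument is correct and follows essentially the same route as the paper: both rest on the representation $\kappa=-\frac{1}{\pi}\ph(Z'-iZ\vartheta')$ of Proposition \ref{anotherform}, the constancy of the sign of $Z\vartheta'$ on each interval $(\gg_n,\gg_{n+1})$ (and on $(a_\vartheta,\gg_1)$, with base point $\kappa(a_\vartheta)=0$ from Proposition \ref{katheta}), and the order-$\omega$ local expansion of $Z$ at a possibly multiple zero to read off the sign of $Z'$ just inside each endpoint. The only difference is packaging: the paper computes the one-sided limits of $\kappa$ explicitly via an $\arccos$ formula, whereas you first extract integrality of $\kappa(\gg_n)$ from the defining equation \eqref{E:defk} and then pin down the endpoint values by a half-plane/continuity argument --- the same ingredients in a slightly different order.
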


\begin{proof}
Assuming that $\kappa(\gg_k)=k$  for $k\le n$  we will show 
that $\kappa(\gg_{n+1})=n+1$. The case $n=0$ is slightly different,
but similar. We assume now that $n\ge1$. 

In the interval $I_n=(\gg_n,\gg_{n+1})$ we have $Z(t)\vartheta'(t)\ne0$. 
Therefore 
\begin{displaymath}
\sgn(Z(t)\vartheta'(t))=\sgn(Z(t))=\nu.
\end{displaymath}
By Proposition \ref{anotherform} 
there is an integer $m$ such that 
\begin{displaymath}
\kappa(t)=m-\frac{1}{\pi}\Arg(Z'(t)-iZ(t)\vartheta'(t)),\qquad t\in I_n.
\end{displaymath}
For $y\ne 0$ we have
\begin{displaymath}
\Arg(x-iy)=\begin{cases}
-\arccos\frac{x}{\sqrt{x^2+y^2}} & y>0\\
\arccos\frac{x}{\sqrt{x^2+y^2}} & y<0.
\end{cases}
\end{displaymath}
Therefore,
\begin{equation}\label{equalitywithm}
\kappa(t)=m+\frac{\nu}{\pi}\arccos\frac{Z'(t)}{\sqrt{Z'(t)^2+Z(t)^2\vartheta'(t)^2}}\,\qquad t\in I_n.
\end{equation}
Then if $\mu=\sgn(Z'(t))$ we will have
\begin{displaymath}
\kappa(t)=m+\frac{\nu}{\pi}\arccos\frac{\mu}{\sqrt{1+\frac{Z(t)^2}{Z'(t)^2}\vartheta'(t)^2}}.
\end{displaymath}
For $t>\gg_n$ and $t\to\gg_n$ we have
for some $A>0$,  $C>0$ and an integer $\omega\ge1$
\begin{align*}
Z(t)&=\nu A (t-\gg_n)^\omega+\Orden(t-\gg_n)^{\omega+1}\\
Z'(t)&=\nu\omega A(t-\gg_n)^{\omega-1}+\Orden(t-\gg_n)^{\omega}\\
\frac{Z(t)}{Z'(t)}&=\frac{1}{\omega}(t-\gg_n)+\Orden(t-\gg_n)^2\\
\frac{Z(t)^2}{Z'(t)^2}\vartheta'(t)^2&=\frac{C^2}{\omega^2}(t-\gg_n)^2+\Orden(t-\gg_n)^3.
\end{align*}
Therefore, in a small interval to the right of $\gg_n$ the sign of $Z(t)$ is the same as 
the sign of $Z'(t)$, so that $\mu=\nu$.  Hence for $\gg_n<t<\gg_n+\delta$ we have
\begin{equation}\label{E:determination}
\kappa(t)=m+\frac{\nu}{\pi}\arccos\Bigl\{\nu\Bigl(1-\frac{1}{2}\frac{C^2}{\omega^2}(t-\gg_n)^2+\Orden(t-\gg_n)^3\Bigr)\Bigr\}.
\end{equation}
Observe that for small $x>0$  we have 
\begin{displaymath}
\arccos(1-x)=\sqrt{2}\sqrt{x}+\Orden(x^{3/2}),\qquad 
\arccos(-1+x)=\pi-\sqrt{2}\sqrt{x}+\Orden(x^{3/2}).
\end{displaymath}
It follows that for $\nu=1$
\begin{displaymath}
\kappa(t)=m+\frac{1}{\pi}\frac{C}{\omega}(t-\gg_n)+\Orden(t-\gg_n)^2
\end{displaymath}
and for $\nu=-1$
\begin{displaymath}
\kappa(t)=m-1+\frac{1}{\pi}\frac{C}{\omega}(t-\gg_n)+\Orden(t-\gg_n)^2.
\end{displaymath}
Taking limits for  $t\to \gg_n^+$ we get
\begin{displaymath}
n=\kappa(\gg_n)=\begin{cases}
m & \text{when $\nu=1$},\\
m-1 & \text{when $\nu=-1$}. 
\end{cases}
\end{displaymath}

Having determined $m$ we move $t$ to the other extreme of the interval $I_n$ in 
\eqref{equalitywithm}.  Therefore, now $\gg_{n+1}-\delta<t<\gg_{n+1}$ with 
$\delta$ small enough. We still have $\sgn Z(t)=\nu$, so that
$Z(t)=\nu B(t-\gg_{n+1})^\varpi$ with $B>0$.  As before we will get
$\frac{Z(t)}{Z'(t)}=\frac{1}{\varpi}(t-\gg_{n+1})$, but in this case this means
that $\sgn(Z'(t))=-\sgn(Z(t))$ so that $\mu=-\nu$, where now   $\mu$ is the sign
of $Z'(t)$ for $\gg_{n+1}-\delta<t<\gg_{n+1}$.
Hence in this case the analogue of \eqref{E:determination}
is 
\begin{equation}
\kappa(t)=m+\frac{\nu}{\pi}\arccos\Bigl\{-\nu\Bigl(1-\frac{1}{2}\frac{C'^2}{\varpi^2}(t-\gg_{n+1})^2+\Orden(t-\gg_{n+1})^3\Bigr)\Bigr\}.
\end{equation}
It follows that for $\nu=1$
\begin{displaymath}
\kappa(t)=m+1-\frac{1}{\pi}\frac{C'}{\varpi}(\gg_{n+1}-t)+\Orden(t-\gg_{n+1})^2.
\end{displaymath}
Taking limits for $t\to\gg_{n+1}$ we get
\begin{displaymath}
\kappa(\gg_{n+1})=m+1=n+1=\kappa(\gg_n)+1
\end{displaymath}
and for $\nu=-1$
\begin{displaymath}
\kappa(t)=m-\frac{1}{\pi}\frac{C'}{\varpi}(\gg_{n+1}-t)+\Orden(t-\gg_{n+1})^2
\end{displaymath}
so that in this case
\begin{displaymath}
\kappa(\gg_{n+1})=m=n+1=\kappa(\gg_n)+1.
\end{displaymath}
\end{proof}

\begin{corollary}\label{integervalues}
The function $\kappa(t)$ takes integer values only in the following cases:
$\kappa(a_\vartheta)=0$,  $\kappa(-a_\vartheta)=-1$,  $\kappa(\xi_n)=n$, $\kappa(-\xi_n)=-n-1$
for all natural numbers $n$. 
\end{corollary}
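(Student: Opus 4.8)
The plan is to work directly from the defining relation \eqref{E:defk}, written as $e^{2\pi i\kappa(t)}=f(t)$ with
$f(t)=1+2\vartheta'(t)\,\zeta(\medio+it)/\zeta'(\medio+it)$. We know $f$ is real analytic on $\R$ and (by Proposition \ref{P:uno} and Proposition \ref{P:doblezeros}) satisfies $|f(t)|=1$ for every $t$. Since $\kappa$ is real-valued, $\kappa(t)\in\Z$ if and only if $f(t)=1$, i.e.\ if and only if the real analytic function $g(t):=f(t)-1$ vanishes at $t$. Thus the whole statement reduces to: (i) locate the zero set of $g$, and (ii) record the value of $\kappa$ at each such point.

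For step (i), split $\R$ into the points $\pm\gg_n$ and their complement. Off the points $\pm\gg_n$ we have the honest formula $g(t)=2\vartheta'(t)\,\zeta(\medio+it)/\zeta'(\medio+it)$, where by Proposition \ref{P:doblezeros} the denominator is nonzero; moreover $\zeta(\overline s)=\overline{\zeta(s)}$ together with $\zeta(\medio)\ne0$ shows that the real zeros of $t\mapsto\zeta(\medio+it)$ are exactly the numbers $\pm\gg_n$. Hence for $t\ne\pm\gg_n$ one has $g(t)=0$ precisely when $\vartheta'(t)=0$; by Corollary \ref{convexity}, using that $\vartheta$ is odd (so $\vartheta'$ is even), this happens only at $t=\pm a_\vartheta$, and these are genuinely distinct from the $\pm\gg_n$ because $\zeta(\medio+ia_\vartheta)\ne0$. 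At the remaining points $\pm\gg_n$ we do not use the quotient formula at all: Proposition \ref{P:17} already gives $\kappa(\gg_n)=n$, hence $g(\gg_n)=0$, and the parity relation $\kappa(-t)=-1-\kappa(t)$ (which is the statement, recorded just after \eqref{kappanature}, that $\kappa+\medio$ is odd) gives $\kappa(-\gg_n)=-n-1$, hence $g(-\gg_n)=0$ as well. Combining, the full solution set of $\kappa(t)\in\Z$ is $\{\pm a_\vartheta\}\cup\{\pm\gg_n:n\in\N\}$, and $\kappa(0)=-\medio\notin\Z$ is consistent with $a_\vartheta,\gg_n>0$.

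For step (ii) we just collect values: $\kappa(\gg_n)=n$ by Proposition \ref{P:17}, $\kappa(a_\vartheta)=0$ by Proposition \ref{katheta}, and then $\kappa(-t)=-1-\kappa(t)$ yields $\kappa(-a_\vartheta)=-1$ and $\kappa(-\gg_n)=-n-1$, which is exactly the list in the statement.

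The only delicate point — and the single place where care is needed — is the behaviour at the (possibly multiple) zeros of $\zeta$ on the critical line: there $\zeta'(\medio+it)$ also vanishes, so the quotient in \eqref{E:defk} is defined only by analytic continuation and one cannot simply read off the zeros of $g$ from the numerator. This obstacle is avoided cleanly by appealing to Proposition \ref{P:17} directly at those points instead of to the quotient formula; everything else is routine bookkeeping with results already established in Sections \ref{S:theta} and \ref{S:kappa}.
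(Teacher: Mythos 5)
Your proposal is correct and follows essentially the same route as the paper: both reduce $\kappa(t)\in\Z$ to the vanishing of $2\vartheta'(t)\zeta(\medio+it)/\zeta'(\medio+it)$ via \eqref{E:defk}, locate the zero set as $\{\pm a_\vartheta\}\cup\{\pm\gg_n\}$ using Corollary \ref{convexity} and Proposition \ref{P:doblezeros}, and then read off the values from Propositions \ref{katheta} and \ref{P:17} together with the odd symmetry of $\kappa+\medio$. Your choice to invoke Proposition \ref{P:17} directly at the points $\pm\gg_n$ rather than arguing that the (analytically continued) quotient $\zeta/\zeta'$ vanishes there is only a minor bookkeeping variant of the paper's parenthetical remark, not a different proof.
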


\begin{proof}
Since $\kappa(t)+\medio$ is an odd function we  get 
$\kappa(-t)=-\kappa(t)-1$, so that 
$\kappa(-a_\vartheta)=-1$ and $\kappa(-\gg_n)=-n-1$.

Assuming that $\kappa(t)\in\Z$, by \eqref{E:defk} we must have 
$\vartheta'(t)\zeta(\frac12+it)=0$ (recall that if $\zeta'(\frac12+it)=0$ then 
$\zeta(\medio+it)=0$ so that the quotient  $\zeta(\frac12+it)/\zeta'(\medio+it)$ is equal to $0$ in this case). 
By Corollary \ref{convexity}, for $t>0$, we have $\vartheta'(t)=0$ only 
for $t=a_\vartheta$. By definition the positive real numbers $t$ such that 
$\zeta(\frac12+it)=0$ are the numbers $\gg_n$. This proves that $\kappa(t)$ is an 
integer only at the points indicated. 
\end{proof}

\begin{corollary}
For $n=1$, $2$, \dots the number $\gg_n$ is the unique solution  of the 
equation $\kappa(t)=n$.
\end{corollary}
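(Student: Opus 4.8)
The plan is to read off both existence and uniqueness from the two results immediately preceding. Existence is no work at all: Proposition~\ref{P:17} states that $\kappa(\gg_n)=n$, so $t=\gg_n$ is indeed a solution of $\kappa(t)=n$. The content of the statement is therefore the \emph{uniqueness}, and this is exactly what Corollary~\ref{integervalues} makes available.

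For uniqueness, suppose $\kappa(t)=n$ for some real $t$, with $n\ge 1$ fixed. Then $\kappa(t)$ is an integer, so by Corollary~\ref{integervalues} the point $t$ must be one of $a_\vartheta$, $-a_\vartheta$, $\gg_m$ or $-\gg_m$ ($m\in\N$), with corresponding values $0$, $-1$, $m$, $-m-1$. Since $n\ge 1$, the values $0$, $-1$ and $-m-1\le -2$ are all excluded, so necessarily $t=\gg_m$ for some $m$ with $\kappa(\gg_m)=m=n$, i.e.\ $t=\gg_n$. As the sequence $(\gg_m)$ is strictly increasing, $n=\kappa(\gg_m)$ holds for the single index $m=n$, and the solution $t=\gg_n$ is unique, as claimed. (If one prefers a self-contained argument: $\kappa(t)\in\Z$ forces $e^{2\pi i\kappa(t)}=1$, so by \eqref{E:defk} and Proposition~\ref{P:doblezeros} we get $\vartheta'(t)\,\zeta(\medio+it)=0$; for $t>0$ Corollary~\ref{convexity} then gives $t=a_\vartheta$, excluded since $\kappa(a_\vartheta)=0\ne n$ by Proposition~\ref{katheta}, or else $\zeta(\medio+it)=0$, i.e.\ $t=\gg_m$, whence $\kappa(t)=m=n$ by Proposition~\ref{P:17}; and $\kappa(0)=-\medio\notin\Z$ while $\kappa(-t)=-\kappa(t)-1$ is negative at the remaining integer points with $t<0$.)

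There is essentially no obstacle to overcome here, since all the analytic input is already packaged in Proposition~\ref{P:17} and Corollary~\ref{integervalues} (and, behind those, in Corollary~\ref{convexity} and the description of $\{t>0:\zeta(\medio+it)=0\}$ as the set $\{\gg_n\}$). The one point that deserves a line of care is checking that the exhaustive list of integer values of $\kappa$ provided by Corollary~\ref{integervalues} attains each positive integer $n$ at exactly one abscissa, namely $\gg_n$ — which is precisely where the strict monotonicity $\gg_1<\gg_2<\cdots$ is used.
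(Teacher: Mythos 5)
Your proof is correct and follows exactly the route the paper intends: the corollary is stated without proof as an immediate consequence of Proposition~\ref{P:17} (existence) and Corollary~\ref{integervalues} (the exhaustive list of integer values of $\kappa$), and you have simply filled in those details. Nothing is missing.
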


If we assume the RH and that the zeros are simple, we get that $\gamma_n$
is the only solution of the equation $\kappa(t)=n$. 

Define $\gg_0=a_\vartheta$, $\gg_{-1}=-a_\vartheta$, $\gg_{-n}=-\gg_{n-1}$, so that
for all integers $n\in\Z$ we have $\kappa(\gg_n)=n$.  With these notations we have

\begin{proposition}\label{monotonous}
For any integer $n\in\Z$ and $t$ with $\gg_n<t<\gg_{n+1}$ we have 
$\kappa(\gg_n)=n<\kappa(t)<n+1=\kappa(\gg_{n+1})$.
\end{proposition}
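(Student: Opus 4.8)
The plan is to read this off almost directly from Corollary~\ref{integervalues}, which already identifies, with the conventions $\gg_0=a_\vartheta$, $\gg_{-1}=-a_\vartheta$ and $\gg_{-n}=-\gg_{n-1}$, the set $\{\gg_k:k\in\Z\}$ as exactly the set of real numbers where $\kappa$ is an integer. First I would record that $(\gg_k)_{k\in\Z}$ is strictly increasing: for $k\ge1$ this is the definition of the $\gg_k$, the chain $-\gg_1<\cdots<-a_\vartheta<a_\vartheta$ is immediate, and the only remaining inequality $a_\vartheta<\gg_1$ is the classical fact that $\zeta(\medio+it)$ has no zero with $0<t\le a_\vartheta\approx6.29$ (indeed $\gg_1\approx14.13$). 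Consequently, for a fixed $n\in\Z$, no $\gg_k$ lies in the open interval $J_n=(\gg_n,\gg_{n+1})$, so Corollary~\ref{integervalues} gives $\kappa(t)\notin\Z$ for every $t\in J_n$.

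The key step is then a short topological remark: since $\kappa$ is real analytic, hence continuous, $\kappa(J_n)$ is a connected subset of $\R\smallsetminus\Z$; as the connected components of $\R\smallsetminus\Z$ are the intervals $(k,k+1)$ with $k\in\Z$, there is a unique integer $k$ with $\kappa(J_n)\subseteq(k,k+1)$. To identify $k$ I would pass to closures: because $[\gg_n,\gg_{n+1}]=\overline{J_n}$ and $\kappa$ is continuous, $\kappa([\gg_n,\gg_{n+1}])\subseteq\overline{\kappa(J_n)}\subseteq[k,k+1]$. By Proposition~\ref{P:17}, Proposition~\ref{katheta} and the conventions defining $\gg_k$ for $k\le0$, we have $\kappa(\gg_n)=n$ and $\kappa(\gg_{n+1})=n+1$; since the length-one interval $[k,k+1]$ then contains both $n$ and $n+1$, it follows that $k=n$. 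Hence $\kappa(J_n)\subseteq(n,n+1)$, that is, $n<\kappa(t)<n+1$ for $\gg_n<t<\gg_{n+1}$, which together with the endpoint values is the assertion.

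I do not expect a real obstacle: all the content sits in Corollary~\ref{integervalues}, which is available, and the only point needing a word of justification is the ordering of $(\gg_k)_{k\in\Z}$, namely the inequality $a_\vartheta<\gg_1$, a standard numerical fact about the first zero of $\zeta$. If one prefers to avoid the topological argument, one can instead use the local expansions obtained in the proof of Proposition~\ref{P:17}: they give $\kappa(t)=n+\tfrac1\pi\tfrac{C}{\omega}(t-\gg_n)+\Orden\bigl((t-\gg_n)^2\bigr)>n$ immediately to the right of $\gg_n$ and, symmetrically, $\kappa(t)<n+1$ immediately to the left of $\gg_{n+1}$; since $\kappa$ omits every integer value on $J_n$, the intermediate value theorem then prevents $\kappa$ from attaining $n$ or $n+1$ anywhere in $J_n$, yielding the same conclusion.
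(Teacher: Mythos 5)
Your proof is correct and follows essentially the same route as the paper's: both arguments rest entirely on Corollary~\ref{integervalues} (so that $\kappa$ omits every integer value on the open interval $(\gg_n,\gg_{n+1})$) combined with continuity and the endpoint values $\kappa(\gg_n)=n$, $\kappa(\gg_{n+1})=n+1$; the paper phrases this as an intermediate-value-theorem contradiction while you phrase it via connectedness, which is the same mechanism. Your extra remarks (the ordering $a_\vartheta<\gg_1$ and the alternative via the local expansions from Proposition~\ref{P:17}) are sound but not needed beyond what the paper already uses implicitly.
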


\begin{proof}
Since $t\ne \gg_m$ the value $\kappa(t)$ is not an integer. If $\kappa(t)<n$, since
$\kappa(x)$ is continuous, there will exist $t<t'<\xi_{n+1}$ with $\kappa(t')=n$, in contradiction
with Corollary \ref{integervalues}.
A similar reasoning rules out the possibility that $\kappa(t)>n+1$. 
\end{proof}

\begin{proposition}\label{P:kn0}
For $t>a_\vartheta$, let $N_{00}(t):=\card\{n\in\N: \gg_n\le  t\}$ 
be the number of real numbers $0<\xi\le t$ such that 
$\zeta(\medio+i\xi)=0$ counted without multiplicity. Then we have
\begin{equation}\label{Nkappa}
N_{00}(t)=\lfloor \kappa(t)\rfloor.\qquad   t>a_\vartheta.
\end{equation}
\end{proposition}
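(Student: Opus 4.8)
The plan is to deduce this from the monotonicity of $\kappa(t)$ (Proposition \ref{monotonous}) together with the fact that the only integer values of $\kappa(t)$ for $t>0$ are at the points $\gg_0=a_\vartheta$ and $\gg_1<\gg_2<\cdots$ (Corollary \ref{integervalues}). Fix $t>a_\vartheta$. Since $t>a_\vartheta=\gg_0$ and $\gg_n\to\infty$, there is a unique integer $n\ge0$ with $\gg_n\le t<\gg_{n+1}$ (using that the $\gg_n$ are strictly increasing and unbounded). I claim $N_{00}(t)=n$: indeed $\{m\in\N:\gg_m\le t\}=\{1,2,\dots,n\}$ exactly — every $\gg_m$ with $1\le m\le n$ satisfies $\gg_m\le\gg_n\le t$, and every $\gg_m$ with $m\ge n+1$ satisfies $\gg_m\ge\gg_{n+1}>t$ — so its cardinality is $n$.

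Next I would show $\lfloor\kappa(t)\rfloor=n$ as well. If $t=\gg_n$ then $\kappa(t)=n$ by Proposition \ref{P:17} (or by the definition $\gg_0=a_\vartheta$ together with Proposition \ref{katheta} when $n=0$), so $\lfloor\kappa(t)\rfloor=n$. If $\gg_n<t<\gg_{n+1}$, then Proposition \ref{monotonous} gives $n=\kappa(\gg_n)<\kappa(t)<\kappa(\gg_{n+1})=n+1$, hence again $\lfloor\kappa(t)\rfloor=n$. In either case $\lfloor\kappa(t)\rfloor=n=N_{00}(t)$, which is exactly \eqref{Nkappa}.

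The only mild subtlety — and the step I would be most careful about — is the edge behavior near $t=a_\vartheta$: one must check that for $a_\vartheta<t<\gg_1$ we indeed have $0<\kappa(t)<1$ (so $\lfloor\kappa(t)\rfloor=0$, matching $N_{00}(t)=0$), which is the $n=0$ instance of Proposition \ref{monotonous} with $\gg_0=a_\vartheta$ and $\gg_1$ the first ordinate of a zeta zero. This is already packaged in Proposition \ref{monotonous} once one knows $\kappa(a_\vartheta)=0$ (Proposition \ref{katheta}) and that no $\gg_m$ lies in $(a_\vartheta,\gg_1)$, so no further work is needed. Everything else is bookkeeping with the monotone step function $N_{00}$ against the continuous, strictly monotone (between consecutive $\gg_n$) function $\kappa$.
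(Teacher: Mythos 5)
Your argument is correct and is essentially the paper's own proof: the paper likewise picks the unique $n\ge 0$ with $\gg_n\le t<\gg_{n+1}$, notes $N_{00}(t)=n$ by definition, and invokes Proposition \ref{monotonous} to get $n\le\kappa(t)<n+1$, hence $\lfloor\kappa(t)\rfloor=n$. The extra care you take at $t=\gg_n$ and near $a_\vartheta$ is already subsumed in Propositions \ref{P:17}, \ref{katheta} and \ref{monotonous}, exactly as you observe.
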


\begin{proof}
Since $t>a_\vartheta=\gg_0$ there is an integer $n\ge0$ such that 
$\gg_n\le t<\gg_{n+1}$. By definition $N_{00}(t)=n$ and by Proposition 
\ref{monotonous} $n\le \kappa(t)<n+1$ so that  $\lfloor \kappa(t)\rfloor =n$. 
\end{proof}

\begin{remark}
It is known \cite{BCY} that $N_0^*(T)$
the number of \emph{simple zeros on the line} to height $T$ satisfies
$\liminf_{T\to\infty}N_0^*(T)/N(T)\ge 0.4058$, where $N(T)$, as usual, denotes
the number of zeros $\beta+i\gamma$ of $\zeta(s)$ with $0<\gamma<T$ 
counted with their multiplicities. Since $\kappa(t)\ge N_0^*(t)$ we 
deduce that 
$\liminf_{t\to\infty}\kappa(t)/N(t)\ge 0.4058$.
In \cite{BH},
assuming the RH (but not the simplicity of the zeros) this has been  improved to
\begin{equation}
\liminf_{t\to\infty}\kappa(t)/N(t)\ge 0.84665.
\end{equation}
\end{remark}

\begin{proposition}\label{midvalues}
For any real $t$ we have $\kappa(t)=\frac{2k+1}{2}$ with $k\in\Z$ if and 
only if $Z'(t)=0$ and $Z(t)\ne0$. 
\end{proposition}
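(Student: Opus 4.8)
The plan is to reduce the statement to the equation $e^{2\pi i\kappa(t)}=-1$ and then read that equation off through the defining relation \eqref{E:defk} of $\kappa$, together with \eqref{zetavartheta} and the identity $\zeta(\medio+it)=e^{-i\vartheta(t)}Z(t)$. Indeed, since $\kappa(t)$ is real, $\kappa(t)=\frac{2k+1}{2}$ for some $k\in\Z$ is equivalent to $e^{2\pi i\kappa(t)}=e^{(2k+1)\pi i}=-1$, so everything comes down to deciding for which $t$ one has $1+2\vartheta'(t)\,\zeta(\medio+it)/\zeta'(\medio+it)=-1$.

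For the forward implication, assume $\kappa(t)=\frac{2k+1}{2}$. This value is not an integer, so by Corollary \ref{integervalues} $t$ is none of the exceptional points $\pm a_\vartheta$, $\pm\gg_n$; in particular $\zeta(\medio+it)\ne0$, hence $Z(t)\ne0$, and by Proposition \ref{P:doblezeros} also $\zeta'(\medio+it)\ne0$, so \eqref{E:defk} is a genuine identity of finite nonzero quantities. Setting its right-hand side equal to $-1$ gives $\vartheta'(t)\zeta(\medio+it)=-\zeta'(\medio+it)$. Now substitute $\zeta(\medio+it)=e^{-i\vartheta(t)}Z(t)$ on the left and, using \eqref{zetavartheta}, $\zeta'(\medio+it)=e^{-i\vartheta(t)}(-\vartheta'(t)Z(t)-iZ'(t))$ on the right; cancelling $e^{-i\vartheta(t)}$ the two terms $\vartheta'(t)Z(t)$ drop out and one is left with $iZ'(t)=0$. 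Hence $Z'(t)=0$, while $Z(t)\ne0$ was already shown.

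For the converse, assume $Z'(t)=0$ and $Z(t)\ne0$. First note $\vartheta'(t)\ne0$: if $\vartheta'(t)=0$, then \eqref{zetavartheta} reads $e^{i\vartheta(t)}\zeta'(\medio+it)=-iZ'(t)=0$, so $\zeta'(\medio+it)=0$, and Proposition \ref{P:doblezeros} would then force $\zeta(\medio+it)=0$, i.e.\ $Z(t)=0$, a contradiction. With $Z(t)\vartheta'(t)\ne0$ and $Z'(t)=0$, \eqref{zetavartheta} gives $\zeta'(\medio+it)=-e^{-i\vartheta(t)}\vartheta'(t)Z(t)\ne0$, so $t\ne\pm\gg_n$ and \eqref{E:defk} applies; evaluating, $2\vartheta'(t)\,\zeta(\medio+it)/\zeta'(\medio+it)=2\vartheta'(t)\cdot(-1/\vartheta'(t))=-2$, whence $e^{2\pi i\kappa(t)}=-1$ and $\kappa(t)\in\frac12+\Z$. (Sanity check: $t=0$ gives $Z'(0)=0$, $Z(0)=\zeta(\medio)\ne0$, consistent with $\kappa(0)=-\medio$.)

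I do not expect a real obstacle: the only thing needing care is that each appeal to \eqref{E:defk} is made at a point where $\zeta'(\medio+it)\ne0$, so that the identity is literal rather than merely "removably" valid, and this is handled every time by Proposition \ref{P:doblezeros} and Corollary \ref{integervalues}. An essentially equivalent route runs through Proposition \ref{anotherform}: $\kappa(t)=-\frac1\pi\ph(Z'(t)-iZ(t)\vartheta'(t))$ is a half-integer exactly when $\ph(Z'(t)-iZ(t)\vartheta'(t))$ is an odd multiple of $\pi/2$, i.e.\ when the real analytic function $Z'(t)-iZ(t)\vartheta'(t)$ is purely imaginary and nonzero — which is precisely $Z'(t)=0$ together with $Z(t)\vartheta'(t)\ne0$ — and one then discards the factor $\vartheta'(t)$ as above.
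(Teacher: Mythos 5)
Your argument is correct and is essentially the paper's proof in different notation: the paper uses the phase identity \eqref{kappaph} and characterizes half-integer values of $\kappa$ by $Z'(t)-iZ(t)\vartheta'(t)$ being purely imaginary and nonzero, while you characterize them by $e^{2\pi i\kappa(t)}=-1$ through \eqref{E:defk}, and the substitution via \eqref{zetavartheta} that you perform is exactly the content of Proposition \ref{anotherform}, as you note yourself at the end. The only substantive difference is a small improvement: where the paper excludes the degenerate case $Z'(t)=\vartheta'(t)=0$ by invoking the numerical value $Z'(\pm a_\vartheta)=\mp 0.18838\dots\ne 0$, you deduce the same exclusion purely logically from Proposition \ref{P:doblezeros}.
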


%

\begin{proof}
The function $\vartheta'(t)$ only vanishes at $t=\pm a_\vartheta$ and at these
points the function $Z'(t)$ does not vanish ($Z'(a_\vartheta)=-Z'(-a_\vartheta)=-0.18838\cdots$). 
Hence $Z'(t)-iZ(t)\vartheta'(t)=0$ only at a point where $Z(t)=Z'(t)=0$. 
Since $Z(t)=0$ there exists $n$ with $t=\xi_n$. By Corollary \ref{integervalues} 
we know that at this point  $\kappa(t)\in\Z$.

Let $t$ be a point where $Z(t)\ne 0$ but $Z'(t)=0$, then 
$Z'(t)-iZ(t)\vartheta'(t)\in i\R^*$ and by \eqref{kappaph} we have
$\kappa(t)=-\frac{1}{\pi}\ph(Z'(t)-iZ(t)\vartheta'(t))=k+\frac{1}{2}$ for some
$k\in\Z$.

If, on the other hand, we assume $\kappa(t)=\frac{2k+1}{2}$, then again by
\eqref{kappaph}, $\ph(Z'(t)-iZ(t)\vartheta'(t))=-(2k+1)\frac{\pi}{2}$, so that
$Z'(t)-iZ(t)\vartheta'(t)\in i\R$, and certainly we will have $Z'(t)=0$ and
as we have seen $Z(t)\ne0$. 
\end{proof}

\section{Hypothesis \SP\  and its consequences.}

One may verify that
$\kappa'(0)$ is negative ($=-0.444 016\dots$). In fact
$\kappa'(t)$ is negative for all $t$ with 
\begin{displaymath}
|t|<a_\kappa
= 0.77985\,35753\,38836\,03051\,82092\,08122\,53710\,71856\,73276\,80740\,38626\,70020\dots
\end{displaymath}
We will prove in Proposition \ref{RHimpliesP} that, assuming the 
RH, $\kappa'(t)>0$ for $t>a_\kappa$.  But we are unable to prove the RH assuming 
$\kappa'(t)>0$ for $t>a_\kappa$.   However, this appears to be a realistic hypothesis 
(weaker than the RH):

\begin{hypothesis}
$\kappa'(t)\ge0$ for $t>a_\kappa$.
\end{hypothesis}

Some of our propositions will depend on this hypothesis.
We will attach the symbol  \SP\  to every proposition or theorem whose
proof depends on this  hypothesis.

\begin{proposition}[\SP]
For each integer $n\in\Z$, with $n\ge0$,  there is a unique real number $\eta_{n+2}$ such that 
$\gg_{n}<\eta_{n+2}<\gg_{n+1}$, and $Z'(\eta_{n+2})=0$.  The number $\eta_{n+2}$ is the unique 
solution to the equation $\kappa(t) = n+\frac12$.
\end{proposition}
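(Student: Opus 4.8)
The plan is to assemble the pieces already in place — Proposition~\ref{P:17} ($\kappa(\gg_n)=n$ for all $n\in\Z$, with $\gg_0=a_\vartheta$), Proposition~\ref{monotonous}, Corollary~\ref{integervalues}, and Proposition~\ref{midvalues} — together with the monotonicity furnished by Hypothesis~\SP. First I would record that $a_\kappa<a_\vartheta=\gg_0$, so Hypothesis~\SP\ gives $\kappa'(t)\ge0$ for every $t\ge\gg_0$; hence $\kappa$ is non-decreasing on $[\gg_0,\infty)$, and in particular on each $[\gg_n,\gg_{n+1}]$ with $n\ge0$. Since $\kappa$ is continuous with $\kappa(\gg_n)=n$ and $\kappa(\gg_{n+1})=n+1$, the intermediate value theorem produces at least one $\eta_{n+2}\in(\gg_n,\gg_{n+1})$ with $\kappa(\eta_{n+2})=n+\medio$.

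For uniqueness inside the interval I would argue by contradiction: if $\gg_n<t_1<t_2<\gg_{n+1}$ both satisfied $\kappa(t_j)=n+\medio$, then, $\kappa$ being non-decreasing, it would be constant equal to $n+\medio$ on $[t_1,t_2]$, so $\kappa'\equiv0$ there. But on an interval where $\kappa=n+\medio$ Proposition~\ref{midvalues} forces $Z'(t)=0$ for all $t\in[t_1,t_2]$, which is impossible because $Z$ is real analytic and non-constant, so $Z'$ has only isolated zeros. This is the one place where I must invoke analyticity of $Z$ rather than merely Hypothesis~\SP, and it is the step I would treat most carefully; the remainder is bookkeeping.

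Next I would promote this to uniqueness of the solution of $\kappa(t)=n+\medio$ over all of $\R$ (for $n\ge0$). At any $t=\gg_m$ ($m\in\Z$, including $m=0,-1$, i.e.\ $t=\pm a_\vartheta$) Corollary~\ref{integervalues} gives $\kappa(t)\in\Z$, so such points are excluded. For $t\in(\gg_{-1},\gg_0)=(-a_\vartheta,a_\vartheta)$ Proposition~\ref{monotonous} gives $\kappa(t)\in(-1,0)$, which cannot equal $n+\medio\ge\medio$. For $t\le-a_\vartheta$, the oddness of $\kappa(t)+\medio$ (noted after \eqref{kappanature}, and used in Corollary~\ref{integervalues}) gives $\kappa(-t)=-\kappa(t)-1=-n-\threemedio<0$, whereas $-t\ge\gg_0$ forces $\kappa(-t)\ge\kappa(\gg_0)=0$ by the monotonicity above — a contradiction. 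Finally, for $t\in(\gg_m,\gg_{m+1})$ with $m\ge0$, Proposition~\ref{monotonous} gives $\kappa(t)\in(m,m+1)$, so $\kappa(t)=n+\medio$ forces $m=n$. Hence all solutions lie in $(\gg_n,\gg_{n+1})$, where by the previous paragraph there is exactly one.

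It remains to identify this $\eta_{n+2}$ with a zero of $Z'$. On $(\gg_n,\gg_{n+1})$ the function $Z$ does not vanish: for $n\ge1$ these are consecutive zeros of $Z$, while for $n=0$ we use $a_\vartheta=\gg_0<\gg_1$, so $(a_\vartheta,\gg_1)$ contains no zero of $Z$. Thus Proposition~\ref{midvalues} applies throughout the interval and says that for $t\in(\gg_n,\gg_{n+1})$ one has $Z'(t)=0$ if and only if $\kappa(t)\in\medio+\Z$; since $\kappa(t)\in(n,n+1)$ there, the only possibility is $\kappa(t)=n+\medio$. Therefore $Z'$ vanishes at exactly one point of $(\gg_n,\gg_{n+1})$, namely the $\eta_{n+2}$ found above, and that point is exactly the solution of $\kappa(t)=n+\medio$. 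As indicated, nothing here is deep beyond the local-constancy argument; the rest is a careful tour through the already-established structure of $\kappa$.
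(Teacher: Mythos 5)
Your proof is correct and follows essentially the same route as the paper: Hypothesis \SP\ plus $a_\kappa<a_\vartheta$ gives monotonicity of $\kappa$ on $[\gg_0,\infty)$, the values $\kappa(\gg_n)=n$ and $\kappa(\gg_{n+1})=n+1$ give existence and localization of the solution of $\kappa(t)=n+\medio$, and Proposition~\ref{midvalues} identifies it with the unique zero of $Z'$ in the interval. The only variation is your exclusion of local constancy via Proposition~\ref{midvalues} and the analyticity of $Z$; the paper gets strict monotonicity more directly from the analyticity of $\kappa$ itself (a non-constant real analytic function with $\kappa'\ge0$ is strictly increasing), which slightly shortens that step.
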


\begin{figure}[H]
  \includegraphics[width=\hsize]{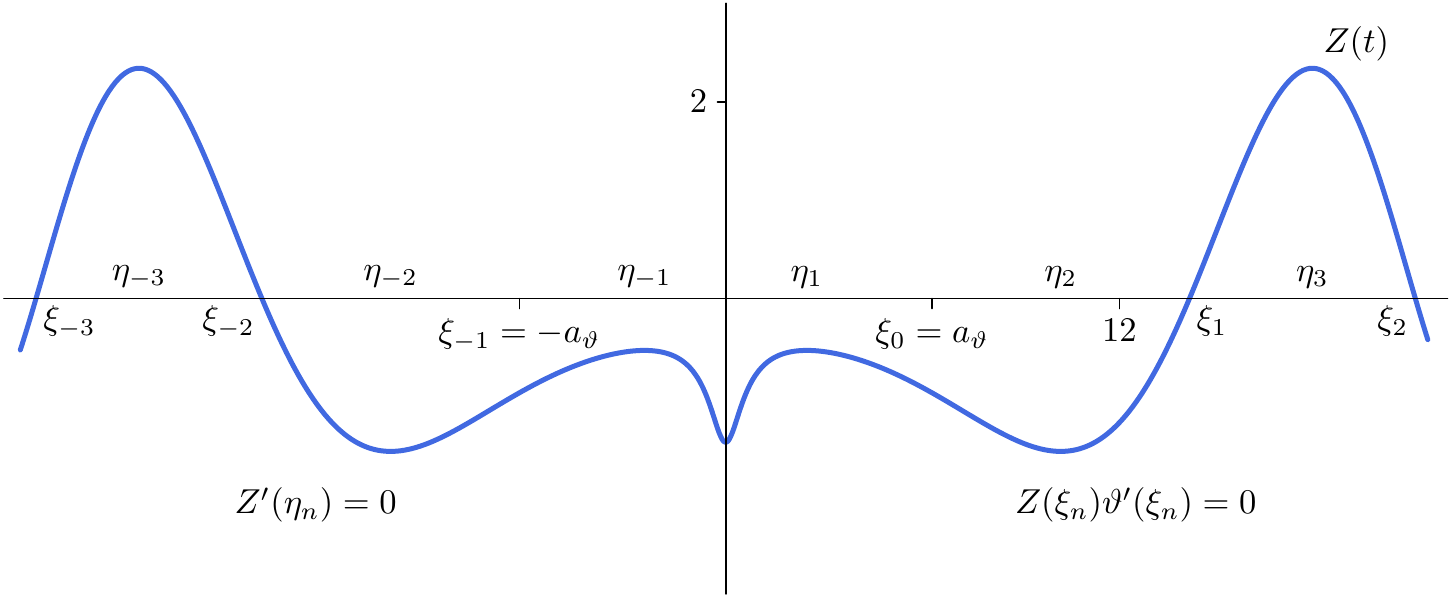}
  \caption{$Z(t)$ near the origin. }
  \label{F:F2}
\end{figure}

\begin{proof}
Since $a_\kappa<a_\vartheta$ and $\kappa(t)$ is real analytic,  the hypothesis \SP\ implies 
that $\kappa(t)$, being  analytic is strictly increasing for $t> \gg_0:=a_\vartheta$.
Therefore,  for $n\ge0$ the function $\kappa(t)$ is strictly increasing in the
interval $(\gg_{n},\gg_{n+1})$, so that
there is only one solution to the equation $\kappa(t)=n+\frac12$.

By Proposition \ref{midvalues} the solution $t=\eta_{n+2}$ to the above equation  is the only 
possible solution of the equation $Z'(t)=0$ in this interval. 
\end{proof}

For $n=-1$ we may check numerically  that 
$t=0$ and $t= \pm\,2.4757266\dots$ are solutions to $Z'(t)=0$ in the interval 
$(\xi_{-1}, \xi_0)=(-a_\vartheta,a_\vartheta)$. 

Using the above it is easy to see that the points where $\kappa(t) = n+\frac12$
are the following:
\smallskip

\noindent (a) Three points in the interval $(\gg_{-1}, \gg_0)=
(a_{-\vartheta}, a_\vartheta)$. These are
$\eta_1=-\eta_{-1}=2.47572\dots$, and $\eta_0=0$ at which 
$\kappa(\eta_{-1})=\kappa(\eta_0)=\kappa(\eta_1)=-\frac12$.
\smallskip

\noindent (b) A point $\eta_2\in(\gg_0,\gg_1)$, with $\eta_2=10.21207\dots$ 
at which $\kappa(\eta_2)=\frac12$
and its symmetric $\eta_{-2}=-\eta_2$ at which $\kappa(-\eta_2)=-\frac32$. 
\smallskip

\noindent (c) For each integer $n\ge1$ a unique point 
$\eta_{n+2}\in(\gg_n,\gg_{n+1})$ at which 
$\kappa(\eta_{n+2})=n+\frac{1}{2}$. Its symmetrical 
$\eta_{-n-2}\in(\gg_{-n-2}, \gg_{-n-1})$ with
$\eta_{-n-2}=-\eta_{n+2}$ and $\kappa(\eta_{-n-2})=-\frac{2n+3}{2}$.

\medskip

One may verify that the minimal 
value $a_\gamma$ of $\kappa(t)$ is
\begin{displaymath}
a_\gamma:=\kappa(a_\kappa)=
-0.67025\,97987\,68599\,50288\,39164\,11968\,66744\,74803\,92790\,09743\,49173\,\dots
\end{displaymath}
Since $\kappa$ is strictly increasing on $(a_\kappa,+\infty)$ with values in
$(a_\gamma,\infty)$ we may define $\gamma(u)$ for $u>a_\gamma$ as the inverse function 
of $\kappa(t)$.   $\gamma(u)$ is a real analytic function on $(a_\gamma,\infty)$ and 
we will have
\begin{displaymath}
\gamma(n) =\xi_n,\qquad \gamma(n+\medio)=\eta_{n+2}, \qquad n\ge0, \quad\text{assuming \SP}.
\end{displaymath}
Of course, assuming the RH with simple zeros we will have $\gamma(n)=\gamma_n$. 

\begin{proposition}
For $t\in\R$ not a multiple zero of $Z(t)$ we have
\begin{equation}\label{eqkappaprime}
\kappa'=\frac{1}{\pi}\frac{ZZ'\vartheta''+(Z')^2\vartheta'-ZZ''\vartheta'}{(Z')^2+(Z\vartheta')^2},
\end{equation}
where for short we have written $\kappa'$ for $\kappa'(t)$,   $Z$ for $Z(t)$,  etc. 
Therefore 
\begin{equation}
\SP\quad\Longleftrightarrow \quad ZZ'\vartheta''+(Z')^2\vartheta'-ZZ''\vartheta'\ge0 \quad 
\text{for $t>a_\kappa$}.
\end{equation}
\end{proposition}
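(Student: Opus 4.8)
The plan is to differentiate the representation \eqref{kappaph} directly. By Proposition \ref{anotherform} we have, for a suitable choice of phase, $\pi\kappa(t) = -\ph(Z'(t) - iZ(t)\vartheta'(t))$, and by Proposition \ref{phaseint} (applied to the real analytic function $g(t) = Z'(t) - iZ(t)\vartheta'(t)$) we get
\begin{displaymath}
\pi\kappa'(t) = -\Im\frac{g'(t)}{g(t)} = -\Im\frac{g'(t)\overline{g(t)}}{|g(t)|^2}.
\end{displaymath}
First I would compute $g'(t) = Z''(t) - iZ'(t)\vartheta'(t) - iZ(t)\vartheta''(t)$, so that $g'\overline{g} = \bigl(Z'' - i(Z'\vartheta' + Z\vartheta'')\bigr)\bigl(Z' + iZ\vartheta'\bigr)$. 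Taking the imaginary part of this product gives $Z''\cdot Z\vartheta' - (Z'\vartheta' + Z\vartheta'')\cdot Z' = ZZ''\vartheta' - (Z')^2\vartheta' - ZZ'\vartheta''$. Since $|g(t)|^2 = (Z')^2 + (Z\vartheta')^2$, we obtain
\begin{displaymath}
\pi\kappa'(t) = -\frac{ZZ''\vartheta' - (Z')^2\vartheta' - ZZ'\vartheta''}{(Z')^2 + (Z\vartheta')^2} = \frac{ZZ'\vartheta'' + (Z')^2\vartheta' - ZZ''\vartheta'}{(Z')^2 + (Z\vartheta')^2},
\end{displaymath}
which is exactly \eqref{eqkappaprime}.

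There is a subtlety about the denominator vanishing: $g(t) = 0$ precisely when $Z'(t) = 0$ and $Z(t)\vartheta'(t) = 0$, i.e. (since $\vartheta'$ vanishes only at $\pm a_\vartheta$, where $Z'$ does not vanish) when $Z(t) = Z'(t) = 0$, that is, at a multiple zero of $Z$. The hypothesis of the proposition excludes exactly these points, so on the remaining set $g(t) \ne 0$ and the computation above is valid; moreover, as in Proposition \ref{phaseint}, the apparent singularities of $\Im(g'/g)$ at isolated points where $|g|^2$ has a simple zero (but $g$ does not vanish — which cannot happen since $|g|$ is the modulus of the analytic $g$) are irrelevant. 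In fact the only genuine exclusions are the multiple zeros of $Z$, matching the statement.

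For the second assertion, note that $(Z')^2 + (Z\vartheta')^2 > 0$ wherever $g(t) \ne 0$, i.e. away from multiple zeros of $Z$; and at a multiple zero $\xi_n$ of $Z$ the numerator $ZZ'\vartheta'' + (Z')^2\vartheta' - ZZ''\vartheta'$ also vanishes (every term has a factor $Z$ or $Z'$, both of which vanish there), so the sign condition $ZZ'\vartheta'' + (Z')^2\vartheta' - ZZ''\vartheta' \ge 0$ holds trivially at those points. Hence the inequality $\kappa'(t) \ge 0$ for $t > a_\kappa$ is equivalent to the stated polynomial inequality for all $t > a_\kappa$, which is Hypothesis \SP. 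I do not expect any real obstacle here; the only point requiring care is the bookkeeping of which points are excluded and the verification that the numerator vanishes to sufficient order at multiple zeros of $Z$ so that the equivalence is clean — a short Taylor-expansion argument of the type already used in the proof of Proposition \ref{P:17}.
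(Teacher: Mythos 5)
Your proof is correct. The paper obtains \eqref{eqkappaprime} by differentiating the representation \eqref{equalitywithm}, i.e. $\kappa(t)=m+\frac{\nu}{\pi}\arccos\bigl(Z'/\sqrt{(Z')^2+(Z\vartheta')^2}\bigr)$ on each interval $(\xi_n,\xi_{n+1})$, and then invoking real analyticity of $\kappa'$ to extend the resulting identity to every point where one is ``not dividing by $0$''. You instead differentiate the phase representation \eqref{kappaph} directly, applying Proposition \ref{phaseint} to $g=Z'-iZ\vartheta'$ to get $\pi\kappa'=-\Im(g'/g)$ and then computing $\Im\bigl(g'\overline{g}\bigr)=ZZ''\vartheta'-(Z')^2\vartheta'-ZZ'\vartheta''$. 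The two routes are equivalent in substance --- both ultimately rest on Proposition \ref{anotherform} --- but yours is tidier: it avoids the sign bookkeeping with $\nu$ and the branch constants $m$, and the identification of the exceptional set (the points where $g=0$, i.e. $Z=Z'=0$, i.e. the multiple zeros of $Z$, using that $\vartheta'$ vanishes only at $\pm a_\vartheta$ where $Z'\ne 0$) falls out cleanly rather than being left implicit. Your closing remark --- that at a multiple zero of $Z$ the numerator of \eqref{eqkappaprime} vanishes, so the excluded points do not disturb the equivalence with Hypothesis \SP{} --- is correct and is in fact slightly more careful than the paper, which does not address that point explicitly.
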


\begin{proof}
For $\xi_n<t<\xi_{n+1}$ we have \eqref{equalitywithm} for some constant $m$. Differentiating
and simplifying we get \eqref{eqkappaprime}.
Since $\kappa'$ is real analytic the equality is true because we are not dividing by $0$. 
\end{proof}

\begin{figure}[H]
  \includegraphics[width=\hsize]{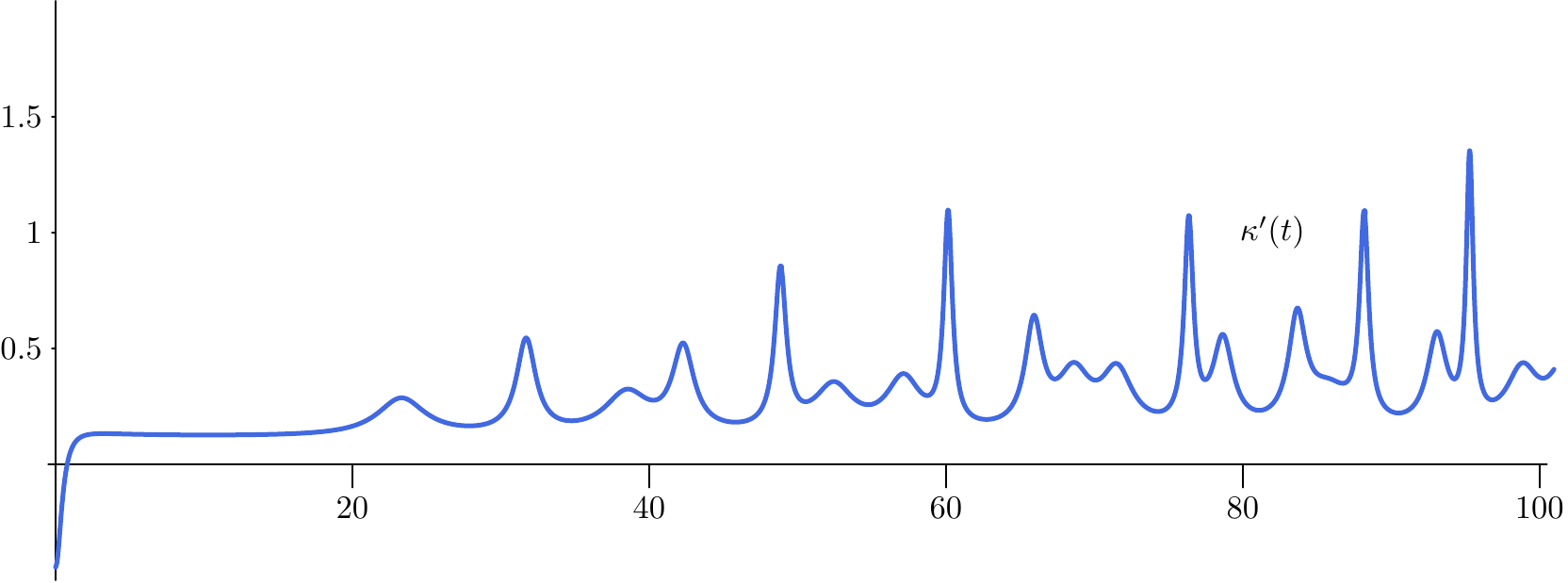}
  \caption{$\kappa'(t)$}
  \label{F:F3}
\end{figure}

\section{Connection of $\kappa'(t)$ with the zeros of $\zeta'(s)$.}\label{S:conection}

We will need some known facts (see \cite{S}, \cite{LM}*{Theorem 9}, \cite{B} and 
\cite{T}*{Theorem 11.5(C)}) about
the zeros of $\zeta'(s)$.

\begin{proposition}

\begin{itemize}
\item[\emph{(a\hspace{0.05cm})}] For $n\ge 1$ there is a unique real solution $a_n$ of
$\zeta'(s)=0$ with $-2n-2< a_n<-2n$, and there are no other  zeros of
$\zeta'(s)$ in $\sigma\le 0$.

\item[\emph{(b)}] Let $\rho'=\beta'+i\gamma'$ denote the non real zeros of
$\zeta'(s)$, and let $N_1(T)$ denote the number of non real zeros of
$\zeta'(s)$ with $0<\gamma<T$.  Then
\begin{displaymath}
N_1(T)=\frac{T}{2\pi}\log \frac{T}{4\pi}-\frac{T}{2\pi}+\Orden(\log
T).
\end{displaymath}

\item[\emph{(c)}] We have $0< \beta'\le E$ where $E\le 3$ is a constant.  The
Riemann Hypothesis is equivalent to $\zeta'(s)$ having no zeros in
$0<\sigma<1/2$.
\end{itemize}
\end{proposition}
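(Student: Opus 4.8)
These three assertions are classical; the plan is to reduce each to the cited literature while recording the short identities and estimates that put the statement into the form we use. For part~(a), write the functional equation as $\zeta(s)=\chi(s)\zeta(1-s)$ with $\chi(s)=2^s\pi^{s-1}\sin\tfrac{\pi s}{2}\,\Gamma(1-s)$, so that
\[
\frac{\zeta'}{\zeta}(s)=\frac{\chi'}{\chi}(s)-\frac{\zeta'}{\zeta}(1-s),\qquad
\frac{\chi'}{\chi}(s)=\log 2\pi+\frac{\pi}{2}\cot\frac{\pi s}{2}-\frac{\Gamma'}{\Gamma}(1-s).
\]
In $\sigma\le 0$ the zeros of $\chi$ are the even integers $0,-2,-4,\dots$, at none of which $\zeta'$ vanishes ($\zeta'(0)=-\tfrac12\log 2\pi$, and at $-2n$ with $n\ge1$ the function $\zeta$ has a simple zero so $\zeta'(-2n)\ne0$). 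Hence a zero of $\zeta'$ in $\sigma\le0$ with $\zeta(s)\ne0$ is a solution of $\tfrac{\zeta'}{\zeta}(s)=0$, i.e. of
\[
\frac{\pi}{2}\cot\frac{\pi s}{2}=\frac{\Gamma'}{\Gamma}(1-s)+\frac{\zeta'}{\zeta}(1-s)-\log 2\pi .
\]
On the negative real axis $\zeta$ is real and vanishes simply exactly at $-2,-4,\dots$, so Rolle gives at least one zero of $\zeta'$ in each interval $(-2n-2,-2n)$, $n\ge1$; comparing the two sides of the displayed equation --- the left side being bounded for $|\Im s|\ge1$ (it tends to $\mp\tfrac{\pi i}{2}$ as $\Im s\to\pm\infty$) and strictly monotone on each of those real intervals, while $\Re\tfrac{\Gamma'}{\Gamma}(1-s)\sim\log|s|$ makes the real parts incompatible off the real axis --- shows these are the only zeros of $\zeta'$ in $\sigma\le0$. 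The full book-keeping, including the region $|\Im s|<1$, is that of Titchmarsh \cite{T}*{Theorem 11.5(C)}, which we cite.

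For part~(b), apply the argument principle to $\zeta'$ on a standard rectangular contour enclosing the non-real zeros with $0<\gamma<T$. As $\sigma\to+\infty$ one has $\zeta'(s)=-(\log 2)\,2^{-s}+\Orden(3^{-\sigma})$, since the $n=1$ term of $\zeta'(s)=-\sum(\log n)n^{-s}$ vanishes; this contributes an extra $-\tfrac{T}{2\pi}\log 2$ to the zero count, on top of the Riemann--von Mangoldt term produced on the far left by $\chi'$, where $\tfrac{\chi'}{\chi}(s)=-\log\tfrac{|s|}{2\pi}+\Orden(1)$ by Stirling. Thus the usual main term $\tfrac{T}{2\pi}\log\tfrac{T}{2\pi}-\tfrac{T}{2\pi}$ is shifted to $\tfrac{T}{2\pi}\log\tfrac{T}{4\pi}-\tfrac{T}{2\pi}$, the ``$4\pi=2\cdot2\pi$'' recording the dominant $n=2$ term, and the horizontal sides contribute $\Orden(\log T)$; this is Berndt's count \cite{B}.

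For part~(c), the lower bound $\beta'>0$ for the non-real zeros is part of (a). For the upper bound, if $\Re s=\sigma>1$ then $\zeta(s)\ne0$, so $\zeta'(s)=0$ would force $\sum_{n\ge2}\Lambda(n)n^{-s}=0$; but
\[
\Bigl|\sum_{n\ge2}\Lambda(n)n^{-s}\Bigr|\ \ge\ (\log 2)\,2^{-\sigma}-\sum_{n\ge3}\Lambda(n)n^{-\sigma},
\]
and at $\sigma=3$ the right side equals $\tfrac{\log 2}{4}+\tfrac{\zeta'}{\zeta}(3)>0$ (numerically $0.173-0.165>0$). Since $\bigl(\sum_{n\ge3}\Lambda(n)n^{-\sigma}\bigr)\big/\bigl((\log 2)\,2^{-\sigma}\bigr)=\tfrac{1}{\log 2}\sum_{n\ge3}\Lambda(n)(2/n)^{\sigma}$ is decreasing in $\sigma$, the inequality is strict for every $\sigma\ge3$; hence $\zeta'(s)\ne0$ for $\Re s\ge3$, so $0<\beta'\le E$ with $E\le3$. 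Finally, the equivalence of the Riemann Hypothesis with $\zeta'(s)$ having no zeros in $0<\sigma<\tfrac12$ is Speiser's theorem \cite{S}, whose quantitative form (relating the zeros of $\zeta'$ there to the failures of RH) is Levinson--Montgomery \cite{LM}*{Theorem 9}.

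The genuinely non-trivial inputs are the uniqueness assertion in (a), the precise $\Orden(\log T)$ error in (b), and Speiser's equivalence in (c): each of these needs the full contour (resp.\ level-curve) analysis of the references rather than the short estimates sketched above, so in the write-up we display the identities and inequalities given here and invoke \cite{T}, \cite{B}, \cite{S} and \cite{LM} for the rest.
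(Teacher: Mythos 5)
Your proposal is essentially the same as what the paper does: the paper states this proposition without proof, simply citing Speiser, Levinson--Montgomery, Berndt, and Titchmarsh for these known facts, and you likewise defer to those references for the substantive parts (uniqueness in (a), the error term in (b), Speiser's equivalence in (c)). The supplementary sketches you add (Rolle's theorem for existence of the $a_n$, the $-(\log 2)2^{-s}$ dominant term explaining the $4\pi$, and the elementary bound showing $\zeta'(s)\ne 0$ for $\Re s\ge 3$) are correct and go beyond what the paper records.
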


We will use $\rho'=\beta'+i\gamma'$ to denote a typical complex
zero of $\zeta'(s)$. Other times we prefer to denote  by
$\rho'_n=\beta'_n+i\gamma'_n $ the sequence of  zeros with
$\gamma'_n>0$ numbered in such a way that
$0<\gamma'_1\le\gamma'_2\le \cdots$, with the understanding that 
the ordinate of a zero of multiplicity $m$ appears $m$ times consecutively
in this sequence.

\begin{proposition} We have the following Mittag-Leffler expansion
\begin{equation}\label{E:MittagLeffler}
\frac{\zeta''(s)}{\zeta'(s)}=a-\frac{2}{s-1}+\sum_{n=1}^\infty
\Bigl(\frac{1}{s-a_n}+\frac{1}{a_n}\Bigr)+ \sum_{\rho'}
\Bigl(\frac{1}{s-\rho'}+\frac{1}{\rho'}\Bigr)
\end{equation}
where the $a_n$ are the real, and $\rho'_n$ the complex  zeros of
$\zeta'(s)$, and $a=0.18334\dots$ is a constant ($=-2+\zeta''(0)/\zeta'(0)$).
\end{proposition}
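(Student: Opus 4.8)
The plan is to apply the standard Mittag-Leffler / Hadamard theory to the logarithmic derivative $\zeta''/\zeta'$, using the known order and zero distribution of $\zeta'(s)$. First I would recall that $\zeta'(s)$ is meromorphic with a single pole at $s=1$ (of order $2$, since $\zeta(s)$ has a simple pole there, so $\zeta(s)\sim (s-1)^{-1}$ and $\zeta'(s)\sim -(s-1)^{-2}$), and that its zeros are exactly the real zeros $a_n$ with $-2n-2<a_n<-2n$ together with the non-real zeros $\rho'=\beta'+i\gamma'$; this is the content of the Proposition just quoted. From the counting estimate $N_1(T)=\frac{T}{2\pi}\log\frac{T}{4\pi}-\frac{T}{2\pi}+\Orden(\log T)$ one gets, just as for $\zeta(s)$ itself, that $\sum_{\rho'}|\rho'|^{-1-\varepsilon}<\infty$ for every $\varepsilon>0$ while $\sum_{\rho'}|\rho'|^{-1}=\infty$; similarly $\sum_n |a_n|^{-1-\varepsilon}<\infty$. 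Hence $\zeta'(s)$ has a Hadamard product of genus $1$: there are constants $B$, $C$ with
\begin{displaymath}
\zeta'(s)=-\frac{1}{(s-1)^2}\,e^{B+Cs}\prod_{n\ge1}\Bigl(1-\frac{s}{a_n}\Bigr)e^{s/a_n}\prod_{\rho'}\Bigl(1-\frac{s}{\rho'}\Bigr)e^{s/\rho'}.
\end{displaymath}

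Next I would take the logarithmic derivative of this product. Term by term differentiation gives
\begin{displaymath}
\frac{\zeta''(s)}{\zeta'(s)}=-\frac{2}{s-1}+C+\sum_{n\ge1}\Bigl(\frac{1}{s-a_n}+\frac{1}{a_n}\Bigr)+\sum_{\rho'}\Bigl(\frac{1}{s-\rho'}+\frac{1}{\rho'}\Bigr),
\end{displaymath}
the rearrangement being justified because the genus-$1$ grouping of each factor with its exponential makes the series locally uniformly convergent on compact sets avoiding the zeros and the pole (this is the same argument used classically for $\xi'/\xi$ and $\zeta'/\zeta$). This already has the asserted shape with the constant $C$ in place of $a$. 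To identify $C$, evaluate at $s=0$: the left side is $\zeta''(0)/\zeta'(0)$, and every bracketed summand on the right vanishes at $s=0$, while $-\frac{2}{s-1}\big|_{s=0}=2$. Hence $\zeta''(0)/\zeta'(0)=C+2$, i.e. $C=-2+\zeta''(0)/\zeta'(0)=a$, which is exactly the constant named in the statement (numerically $a=0.18334\dots$). This completes the derivation of \eqref{E:MittagLeffler}.

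The step I expect to be the main obstacle is the rigorous justification that $\zeta'(s)$ really is an entire-of-order-$1$-times-simple-rational function, i.e. that the Hadamard factorization with genus $1$ is legitimate and that no exponential polynomial of degree higher than $1$ appears. This requires an order estimate: one must show $\zeta'(s)=\Orden(e^{|s|^{1+\varepsilon}})$ away from $s=1$, which follows from the corresponding bound for $\zeta(s)$ (polynomial growth in vertical strips plus the functional-equation growth, giving finite order) together with a Cauchy-estimate passage from $\zeta$ to $\zeta'$; combined with the convergence exponents of the zero sets read off from the $N_1(T)$ asymptotics, this pins the genus at $1$. Once that structural fact is in hand, everything else is the routine logarithmic-differentiation-and-evaluate-at-$0$ computation sketched above. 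Alternatively, and perhaps more cleanly for a self-contained write-up, one can bypass the factorization of $\zeta'$ and instead start from the known Mittag-Leffler expansion of $\zeta'(s)/\zeta(s)$ (which has simple poles at $s=1$ and at the zeros $\rho$ of $\zeta$), write $\zeta''/\zeta' = (\zeta'/\zeta)' / (\zeta'/\zeta) + \zeta'/\zeta$, and analyze the poles directly; but the cleanest route remains the Hadamard product for $\zeta'$, so that is what I would present, citing \cite{B} and \cite{T}*{Theorem 11.5} for the growth and zero-counting inputs.
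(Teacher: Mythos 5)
Your proposal is correct and follows essentially the same route as the paper: form the entire function $(s-1)^2\zeta'(s)$ of order $1$, invoke the Hadamard/Weierstrass genus-$1$ product over the real zeros $a_n$ and complex zeros $\rho'$, take the logarithmic derivative, and evaluate at $s=0$ to identify the constant $a=-2+\zeta''(0)/\zeta'(0)$. The only cosmetic difference is that you place an explicit minus sign in front of $(s-1)^{-2}$ while the paper absorbs it into the (complex) constant $b$ with $e^b=\zeta'(0)$.
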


\begin{proof} The entire function $f(s)=(s-1)^2\zeta'(s)$ has  the
same order as $(s-1)\zeta(s)$ so that $f(s)$ is an entire function
of order $1$.

From the above results about the zeros of $\zeta'(s)$ it follows
easily that the exponent of convergence of the zeros of $f(s)$ is
$1$. Also, the series $\sum_{n=1}^\infty 1/|a_n|$ is divergent. Thus
we have
\begin{equation}\label{E:product}
\zeta'(s)=e^{as+b}(s-1)^{-2} \prod_{n=1}^\infty
\Bigl(1-\frac{s}{a_n}\Bigr)e^{s/a_n} \prod_{\rho'}
\Bigl(1-\frac{s}{\rho'}\Bigr)e^{s/\rho'}
\end{equation}
for some constants $a$ and $b$.

Now we take logarithms and differentiate to get
\eqref{E:MittagLeffler}. At the point $s=0$ we obtain the equality
\begin{displaymath}
\frac{\zeta''(0)}{\zeta'(0)}=a+2
\end{displaymath}
from which we get the numerical value for $a$ given in the statement.
\end{proof}

\begin{remark}
It can be proved that
\begin{displaymath}
\frac{\zeta''(0)}{\zeta'(0)}=
\frac{\pi^2}{12\log2\pi}-\frac{\gamma^2+2\gamma_1}{\log2\pi}+\log2\pi
\end{displaymath}
where $\gamma$ (the Euler constant) and $\gamma_1$ are  Stieltjes
constants appearing as coefficients in the Mittag--Leffler
expansion of $\zeta(s)$ at the point $s=1$.
\end{remark}

\begin{remark}
The constant $b$ in equation \eqref{E:product} is determined by
$e^b=\zeta'(0)=-\medio\log(2\pi)$. So $b$ is complex. 
\end{remark}

\begin{proposition}\label{P: kappaprime}
We have
\begin{equation}\label{E:kappaprime}
\pi\kappa'(t)=A+f(t)+\sum_{\rho'=\beta'+i\gamma'}
\frac{\beta'-1/2}{(1/2-\beta')^2+(t-\gamma')^2}
\end{equation}
where $A$ is a constant and $f(t)$ is a bounded continuous function
such that $f(t)=\Orden(t^{-1})$ as $t\to+\infty$.
\end{proposition}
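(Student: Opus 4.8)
The plan is to start from the representation of $\kappa(t)$ in terms of logarithmic derivatives, namely equation \eqref{kappanature}, which gives
\[
\pi\kappa'(t)=\Re\Bigl(\frac{\zeta'(\tfrac12+it)}{\zeta(\tfrac12+it)}-\frac{\zeta''(\tfrac12+it)}{\zeta'(\tfrac12+it)}\Bigr)
\]
wherever this makes sense, and to observe that the two summands can be handled by their respective Mittag--Leffler / Hadamard expansions. For the second summand we use directly \eqref{E:MittagLeffler}: writing $s=\tfrac12+it$ and taking real parts, the term $-2/(s-1)$ contributes $-2\Re\frac1{s-1}=-\frac{2(-1/2)}{1/4+t^2}=\Orden(t^{-1})$ and is bounded; each real-zero term $1/(s-a_n)+1/a_n$ contributes, after taking real parts, something of size $\Orden(|a_n|^{-2})$ uniformly for $t$ in bounded sets and whose sum over $n$ is a bounded continuous $\Orden(t^{-1})$ function (because $a_n\asymp -2n$); and the complex-zero terms $1/(s-\rho')+1/\rho'$ give, after taking the real part of $1/(s-\rho')$,
\[
\Re\frac{1}{\tfrac12+it-\beta'-i\gamma'}=\frac{\tfrac12-\beta'}{(\tfrac12-\beta')^2+(t-\gamma')^2},
\]
which is the negative of the sum displayed in \eqref{E:kappaprime}; the compensating pieces $\Re(1/\rho')$ and $a$ get absorbed into constants after one checks (via part (b) of the preceding Proposition, i.e.\ the counting estimate $N_1(T)$) that the rearrangement is legitimate and that $\sum_{\rho'}\Re(1/\rho')$ together with the sum of $\Orden((t-\gamma')^{-2})$ tails converges.

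Next I would treat the first summand $\Re\,\zeta'/\zeta$ at $s=\tfrac12+it$. Here the natural tool is the Hadamard product for $\zeta(s)$, or equivalently the known expansion
\[
\Re\frac{\zeta'(\tfrac12+it)}{\zeta(\tfrac12+it)}=\sum_{\rho=\beta+i\gamma}\frac{\tfrac12-\beta}{(\tfrac12-\beta)^2+(t-\gamma)^2}+(\text{a bounded }\Orden(t^{-1})\text{ term}),
\]
the leftover being the contribution of the Archimedean factor $-\tfrac12\log\pi+\tfrac12\Re\frac{\Gamma'}{\Gamma}(\tfrac14+i\tfrac t2)$ minus the pole term $\Re\frac1{s-1}$, whose combined size is $\Orden(\log t)$ a priori — and this is exactly the point where care is needed. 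The key cancellation is that the archimedean/pole contribution to $\Re\,\zeta'/\zeta$ is, up to $\Orden(t^{-1})$, equal to $-\vartheta'(t)$ (this is precisely Example \ref{exampletheta}, equation \eqref{intvartheta}, differentiated), while the analogous archimedean piece coming from $\zeta''/\zeta'$ via \eqref{E:MittagLeffler} is $a+\Orden(t^{-1})$, a genuine constant. So the difference of the two archimedean pieces is $-\vartheta'(t)-a+\Orden(t^{-1})$; but $\vartheta'(t)=\tfrac12\log\frac{t}{2\pi}+\Orden(t^{-1})$ grows, so it cannot simply be a bounded $f(t)$. The resolution is that the $\log$-growth of $-\vartheta'(t)$ is matched by the difference of the two zero-counting densities: $N(T)$ for $\zeta$ has density $\tfrac1{2\pi}\log\frac{T}{2\pi}$ while $N_1(T)$ for $\zeta'$ has density $\tfrac1{2\pi}\log\frac{T}{4\pi}$, and the resulting discrepancy in the ``balanced'' sums over zeros produces exactly a term $+\tfrac12\log\frac{t}{2\pi}+\Orden(1)$ (modulo which the remaining sums converge absolutely to $\Orden(t^{-1})$-tails), cancelling $-\vartheta'(t)$ up to a constant.

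Concretely, the cleanest way to organize this is to \emph{pair} the $\zeta$-zeros with the $\zeta'$-zeros: one subtracts and adds $\tfrac12\log\frac{t}{2\pi}$, rewrites $\Re\,\zeta'/\zeta-\Re\,\zeta''/\zeta'$ as $\bigl(-\vartheta'(t)+\sum_\rho(\dots)\bigr)-\bigl(a+\sum_{\rho'}(\dots)\bigr)+\Orden(t^{-1})$, and then uses the fact that $\sum_\rho\frac{1/2}{1/4+(t-\gamma)^2}-\sum_{\rho'}\frac{1/2}{1/4+(t-\gamma')^2}$ differs from $\vartheta'(t)$ by a bounded $\Orden(t^{-1})$ function — this last claim follows by differencing Riemann's formula $N(T)=\tfrac{1}{\pi}\vartheta(T)+1+S(T)$ against Berndt's formula for $N_1(T)$ and a standard density-of-zeros argument (the same circle of ideas that in Section~\ref{counting} pins down $A=\tfrac12\log2$). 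What survives is $\sum_\rho\frac{1/2-\beta}{(1/2-\beta)^2+(t-\gamma)^2}$ (which on RH is $0$, off RH is still $\Orden(t^{-1})$ since off-line zeros are sparse and symmetric) folded into $f(t)$, minus $\sum_{\rho'}\frac{1/2-\beta'}{(1/2-\beta')^2+(t-\gamma')^2}$ which is the displayed sum, plus a residual constant $A$ absorbing all the $\Orden(1)$ pieces, plus a bounded $\Orden(t^{-1})$ remainder $f(t)$. The main obstacle is the justification of the conditionally convergent rearrangements and of the $\Orden(t^{-1})$ bounds on the zero-sum tails uniformly in $t$; this is where part~(b) of the cited Proposition (the $N_1(T)$ asymptotic) does the real work, exactly as the classical estimate for $N(T)$ does in the treatment of $\Re\,\zeta'/\zeta$.
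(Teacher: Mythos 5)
Your starting identity is the same as the paper's: differentiating \eqref{E:kappa} gives $\pi\kappa'(t)=-\vartheta'(t)-\Re\frac{\zeta''(\medio+it)}{\zeta'(\medio+it)}$ (and $-\vartheta'(t)=\Re\frac{\zeta'}{\zeta}(\medio+it)$ by \eqref{intvartheta}), and you correctly read off the displayed sum from the complex-zero terms of \eqref{E:MittagLeffler}. But the heart of the proposition is the claim that everything else is a constant plus a bounded $\Orden(t^{-1})$ function, and there your accounting goes wrong in two places. First, you assert that the real-zero terms $\Re\bigl(\frac{1}{s-a_n}+\frac{1}{a_n}\bigr)=\frac{1/2-a_n}{(1/2-a_n)^2+t^2}+\frac{1}{a_n}$ sum to a bounded $\Orden(t^{-1})$ function. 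They do not: since $a_n\asymp -2n$, the regularized sum behaves like $-\sum_n\frac{t^2}{2n(4n^2+t^2)}\sim-\frac12\log t$ as $t\to+\infty$. This logarithmic growth is not a nuisance to be estimated away; it is precisely what cancels the term $-\vartheta'(t)\sim-\frac12\log\frac{t}{2\pi}$, which enters with the opposite sign through $-\Re\,\zeta''/\zeta'$. Second, the cancellation you do propose --- between the nontrivial zeros of $\zeta$ and those of $\zeta'$, via the difference of the counting functions $N(T)$ and $N_1(T)$ --- does not occur: on the critical line the sum $\sum_\rho\Re\frac{1}{s-\rho}$ over zeros of $\zeta$ vanishes identically (the terms for $\rho$ and $1-\overline{\rho}$ cancel exactly), so it contributes nothing, while the sum over zeros of $\zeta'$ is kept whole as the displayed sum; the density sums $\sum\frac{1/2}{1/4+(t-\gamma)^2}$ you invoke have numerator $\frac12$ rather than $\frac12-\beta$ and do not appear in either Mittag--Leffler expansion.

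The paper's proof supplies exactly the ingredient your sketch is missing. It inserts the explicit series \eqref{E:thetaprime} for $\vartheta'(t)$, whose $k$-th term $\frac{2(4k+1)}{(4k+1)^2+4t^2}=\frac{2k+1/2}{(2k+1/2)^2+t^2}$ comes from the poles of $\Gamma(\frac14+i\frac t2)$, and pairs it with the $k$-th real zero $a_k\in(-2k-2,-2k)$ of $\zeta'(s)$. Writing each difference as $\int_{2k+1/2}^{1/2-a_k}\frac{x^2-t^2}{(x^2+t^2)^2}\,dx$ and observing that the intervals $(2k+1/2,\,1/2-a_k)$ are pairwise disjoint, the whole sum is dominated by the single integral $\int_{5/2}^{\infty}\frac{|x^2-t^2|}{(x^2+t^2)^2}\,dx=\Orden(t^{-1})$; this is the definition \eqref{def f} of $f(t)$ and the source of the bound \eqref{cota}. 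Without this pairing (or an equivalent device) you have no proof that $f(t)$ is bounded, let alone $\Orden(t^{-1})$; and the detour through the Hadamard product of $\zeta(s)$ buys you nothing, since that sum collapses back to $-\vartheta'(t)$ anyway.
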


\begin{remark}
The exact definition of $f(t)$ is given in \eqref{def f}.
\end{remark}

\begin{remark}
In Corollary \ref{main} we will prove that $A=\frac12\log2$.
\end{remark}

\begin{proof}
From \eqref{E:kappa} we get
\begin{equation}\label{E:kappaprime2}
\pi\kappa'(t)=-\vartheta'(t)-\Re\frac{\zeta''(\medio+it)}{\zeta'(\medio+it)}.
\end{equation}

Now in \eqref{E:MittagLeffler} we put $s=1/2+it$ and take real parts
\begin{multline*}
\Re\frac{\zeta''(1/2+it)}{\zeta'(1/2+it)}= a+\frac{4}{1+4t^2}+
\sum_{n=1}^\infty
\Bigl(\frac{1/2-a_n}{(1/2-a_n)^2+t^2}+\frac{1}{a_n}\Bigr)\\+
\sum_{\rho'=\beta'+i\gamma'}
\Bigl(\frac{1/2-\beta'}{(1/2-\beta')^2+(t-\gamma')^2}+
\frac{\beta'}{\beta'{}^2+\gamma'{}^2}\Bigr).
\end{multline*}
Hence from \eqref{E:kappaprime2} and \eqref{E:thetaprime} we get
\begin{multline*}
\pi\kappa'(t)=A-\frac{2}{1+4t^2}+\sum_{n=1}^\infty
\Bigl(\frac{2(4n+1)}{(4n+1)^2+4t^2}-\frac{1/2-a_n}{(1/2-a_n)^2+t^2}\Bigr)\\
+\sum_{\rho'=\beta'+i\gamma'}
\frac{\beta'-1/2}{(1/2-\beta')^2+(t-\gamma')^2}
\end{multline*}
where
\begin{equation}\label{E:A}
A=\frac{1}{2}(\gamma+\log\pi)-a-
\sum_{n=1}^\infty\frac{2\beta'_n}{\beta'_n{}^2+\gamma'_n{}^2}-
\sum_{n=1}^\infty\Bigl(\frac{1}{2n}+\frac{1}{a_n}\Bigr).
\end{equation}
We put
\begin{equation}\label{def f}
f(t)=-\frac{2}{1+4t^2}+\sum_{n=1}^\infty
\Bigl(\frac{2(4n+1)}{(4n+1)^2+4t^2}-\frac{1/2-a_n}{(1/2-a_n)^2+t^2}\Bigr).
\end{equation}
Now observe that the terms of the sum can be written as
\begin{displaymath}
\frac{(2n+1/2)}{(2n+1/2)^2+t^2}-\frac{1/2-a_n}{(1/2-a_n)^2+t^2}=
\int_{2n+1/2}^{1/2-a_n}
\frac{x^2-t^2}{(x^2+t^2)^2}\,dx.
\end{displaymath}
The intervals $(2n+1/2, 1/2-a_n)$ do not intersect, so that for $|t|<T$
the absolute values of the terms of the sum are bounded by
\begin{multline*}
\sum_{n=1}^\infty
\Bigl|\frac{2(4n+1)}{(4n+1)^2+4t^2}-\frac{1/2-a_n}{(1/2-a_n)^2+t^2}\Bigr|\le
\int_{5/2}^{+\infty}\frac{|x^2-t^2|}{(x^2+t^2)^2}\,dx\\
\le
\int_{5/2}^{+\infty}\frac{T^2+x^2}{x^4}\,dx<+\infty.
\end{multline*}
This proves  that $f(t)$ is a continuous function.

Also for $t>1$ we will have
\begin{equation}\label{cota}
|f(t)|\le\frac{2}{1+4t^2}+\int_0^{+\infty}\frac{|x^2-t^2|}{(x^2+t^2)^2}\,dx=
\frac{2}{1+4t^2}+\frac{1}{t}.
\end{equation}
\end{proof}

\begin{remark}
It can be shown that the zero $a_{n-1}$ contained in $(-2n, -2n+2)$ satisfies
$2n+a_{n-1}\sim 1/\log(n/\pi)$. This information can be used to show that
$f(t)=\Orden(1/t\log t)$. In this way we may improve the error term in 
\eqref{pikappa} from $\Orden(\log t)$ to $\Orden(\log\log t)$. 
\end{remark}

We introduce some  notation: if $t>0$ and $n\ge1$ let $\angles(t,\rho'_n)=\angles_n(t)$
be the angle at $\rho'_n$ of the triangle with vertices at
$\rho'_n=\beta'_n+i\gamma'_n$, $1/2-it$ and $1/2+it$. We consider
this angle expressed in radians  positive if $\beta'_n>1/2$ and
negative if $\beta'_n<1/2$, and we put $\angles_n(t)=0$ when
$\beta'_n=\medio$.  In other words with $s=\medio+it$ and $\rho'=\beta'+i\gamma'$
we have
\begin{equation}
\angles(t, \beta'+i\gamma')=\arctan\frac{t-\gamma'}{\beta'-\frac12}+
\arctan\frac{t+\gamma'}{\beta'-\frac12}=\Arg\frac{\overline{s}-\rho'}{s-\rho'}\qquad  (\beta'\ne\medio).
\end{equation}

\begin{proposition} For $t>0$ we have
\begin{equation}\label{pikappa}
\pi\kappa(t)=A t+\sum_{n=1}^\infty\angles_n(t)+\Orden(\log t)
\end{equation}
where the sum is extended over all zeros
$\rho'_n=\beta'_n+i\gamma'_n$ of $\zeta'(s)$ with $\gamma'_n>0$.
\end{proposition}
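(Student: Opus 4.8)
The plan is to integrate the Mittag-Leffler-type formula \eqref{E:kappaprime} for $\pi\kappa'(t)$ from $0$ to $t$ and recognize the resulting integrals. First I would write $\pi\kappa(t) = \pi\kappa(0) + \int_0^t \pi\kappa'(x)\,dx$ with $\pi\kappa(0) = -\pi/2$, and substitute the three pieces of \eqref{E:kappaprime}: the constant $A$ contributes $At$; the bounded term $f(x)$, which is $\Orden(x^{-1})$ at infinity by \eqref{cota}, contributes $\int_0^t f(x)\,dx = \Orden(\log t)$; and the sum over the zeros $\rho'$ must be integrated term by term. The key computation is that, for a single complex zero $\rho' = \beta'+i\gamma'$ together with its conjugate partner, the antiderivative of the corresponding summand is an arctangent. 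Since the zeros come in conjugate pairs $\beta'\pm i\gamma'$ (because $\zeta'(s)$ is real on the reals), pairing them gives
\[
\int_0^t \Bigl(\frac{\beta'-\frac12}{(\frac12-\beta')^2+(x-\gamma')^2}+\frac{\beta'-\frac12}{(\frac12-\beta')^2+(x+\gamma')^2}\Bigr)dx
= \arctan\frac{t-\gamma'}{\beta'-\frac12}+\arctan\frac{t+\gamma'}{\beta'-\frac12} + (\text{const}),
\]
which is exactly $\angles_n(t)$ up to the value at $t=0$; and the constant at $t=0$ is $2\arctan\frac{-\gamma'}{\beta'-\frac12}$, a bounded quantity whose contribution over all $n$ I must control.

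The main obstacle is justifying the term-by-term integration and showing that the accumulated constants of integration, together with the $\angles_n(0)$ corrections and the real-zero sum buried in $f$, all collapse into the single $\Orden(\log t)$ error — equivalently, that $\sum_n \angles_n(t)$ really does converge for each fixed $t$ and that what is discarded at the lower endpoint is $\Orden(\log t)$ (in fact, using $N_1(T)$ from part (b) of the cited proposition, one sees the number of zeros with $\gamma'_n \le t$ is $\Orden(t\log t)$ but the tail of the sum $\sum \angles_n(t)$ is small because $\angles_n(t) = \Orden(t/\gamma_n')$ for $\gamma_n'$ large, and similarly the discarded constants are $\Orden(1/\gamma_n')$ or bounded for the finitely many zeros with small ordinate). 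The cleanest route is to use the already-established fact that \eqref{E:kappaprime2} holds, integrate that identity directly — $\int_0^t(-\vartheta'(x))\,dx = -\vartheta(t) = \Orden(\log t)$ by the asymptotics recalled in Section \ref{S:theta}, and $\int_0^t \Re\frac{\zeta''}{\zeta'}(\frac12+ix)\,dx = \ph\zeta'(\frac12+it)-\pi$ by Proposition \ref{phaseint} — and then invoke the standard argument-principle estimate for $\ph\zeta'(\frac12+it)$ in terms of the zeros $\rho'$, exactly as Riemann–von Mangoldt is proved for $\zeta$. This realizes $\ph\zeta'(\frac12+it)$ as $\sum_n \angles_n(t)$ plus a main smooth term (coming from the $\Gamma$-factor / the $(s-1)^{-2}$ and real-zero factors in \eqref{E:product}) plus $\Orden(\log t)$; the smooth main term is absorbed into $At$ up to $\Orden(\log t)$ by comparing with the definition \eqref{E:A} of $A$.

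I would organize the write-up as: (1) start from \eqref{E:kappaprime}, integrate, and isolate the sum $S(t):=\sum_n\int_0^t[\cdots]\,dx$; (2) evaluate each paired integral as $\angles_n(t) - \angles_n(0)$; (3) show $\sum_n|\angles_n(0)| = \Orden(1)$ is false in general but $\sum_n \angles_n(0)$ reorganizes against the real-zero part of $\int_0^t f$ — actually the honest statement is that both $\int_0^t f(x)\,dx$ and $\sum_n \angles_n(0)$ are individually $\Orden(\log t)$ in the relevant truncation, using the density bound $N_1(T) \sim \frac{T}{2\pi}\log\frac{T}{4\pi}$; (4) conclude $\pi\kappa(t) = At + \sum_n\angles_n(t) + \Orden(\log t)$. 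Because the excerpt already grants us everything through Proposition \ref{P: kappaprime}, the proof is essentially this integration plus a bookkeeping estimate on the sizes of the arctangent terms; the one genuinely delicate point, as noted, is the uniform control of the lower-endpoint constants, which is handled by splitting the zeros into those with $\gamma'_n \le t$ (finitely many, contributing $\Orden(\log t)$ via the zero-counting function) and those with $\gamma'_n > t$ (where $\angles_n(t) = \Orden(t/(\gamma'_n)^2)\cdot(\text{something})$ sums to $\Orden(1)$).
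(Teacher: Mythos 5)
Your overall strategy is the paper's own: integrate \eqref{E:kappaprime} from $0$ to $t$, let the constant produce $At$, absorb $\int_0^t f(x)\,dx$ into $\Orden(\log t)$ via \eqref{cota}, and recognize the integrated zero-sum as $\sum_n\angles_n(t)$ after pairing each $\rho'=\beta'+i\gamma'$ with its conjugate. However, the ``main obstacle'' you identify --- controlling the accumulated lower-endpoint constants --- does not exist, and your evaluation of that constant contains a sign error. The antiderivative of the term coming from $\beta'+i\gamma'$, namely $\arctan\frac{x-\gamma'}{\beta'-\medio}$, takes the value $\arctan\frac{-\gamma'}{\beta'-\medio}$ at $x=0$, while the antiderivative of the conjugate term, $\arctan\frac{x+\gamma'}{\beta'-\medio}$, takes the value $\arctan\frac{+\gamma'}{\beta'-\medio}$ there; these are negatives of one another, so the paired constant is exactly $0$, not $2\arctan\frac{-\gamma'}{\beta'-\medio}$. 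Equivalently, $\angles_n(0)=0$ for every $n$ and the paired definite integral equals $\angles_n(t)$ on the nose --- this cancellation is precisely the reason the paper fixes the sign convention for $\angles_n(t)$ as it does. Consequently your step (3), asserting that $\sum_n\angles_n(0)$ must ``reorganize against the real-zero part of $\int_0^t f$,'' is both unnecessary and incorrect as stated: by \eqref{def f} the function $f$ already carries the entire real-zero and $\vartheta'$ contribution and has no interaction with the complex-zero sum, and nothing at all is discarded at the lower endpoint.

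Two further remarks. The term-by-term integration you rightly flag is routine: for $|x|\le T$ the tail of the series over $\gamma'_n>2T$ is dominated by $C\sum\gamma_n'^{-2}<\infty$ (using $|\beta'-\medio|\le 3$ and Berndt's count), so it converges uniformly on $[0,t]$ and may be integrated termwise, while the finitely many remaining terms are integrated individually; your tail estimate $\angles_n(t)=\Orden(t/\gamma_n'^2)$ is the right one and shows $\sum_n\angles_n(t)$ converges for each $t$. Your alternative ``cleanest route'' via an argument-principle analysis of $\ph\zeta'(\medio+it)$ would amount to reproving a Riemann--von Mangoldt-type formula for $\zeta'$ from scratch; it is substantially heavier than needed and is left entirely unexecuted, whereas the direct integration, once the conjugate-pair cancellation is done correctly, closes the proof of \eqref{pikappa} in a few lines.
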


\begin{proof} By \eqref{E:kappa} and \eqref{E:kappaprime} we have
\begin{displaymath}
\pi\kappa(t)=-\frac{\pi}{2}+At+ \int_0^t f(x)\,dx+\int_0^t
\sum_{\rho'=\beta'+i\gamma'}
\frac{\beta'-\medio}{(\medio-\beta')^2+(x-\gamma')^2}\,dx.
\end{displaymath}

Observe that if $\beta'=\medio$ then the corresponding term does not
contribute to   the sum.

Thus
\begin{displaymath}
\pi\kappa(t)=-\frac{\pi}{2}+ At+ \int_0^t f(x)\,dx+
\sum_{\rho'=\beta'+i\gamma'}
\Bigl\{\arctan\frac{t-\gamma'}{\beta'-\medio}+
\arctan\frac{\gamma'}{\beta'-\medio}\Bigr\}
\end{displaymath}
where the terms with $\beta'_n=\medio$ should be omitted.
It is easy
to see that the sum of the terms corresponding to
$\rho'_n=\beta'_n+i\gamma'_n$ and
$\overline{\rho'_n}=\beta'_n-i\gamma'_n$ add up to exactly $\angles_n(t)$.
(This is the reason we made the convention about the sign of
$\angles_n(t)$). Thus we arrive at
\begin{displaymath}
\pi\kappa(t)=-\frac{\pi}{2}+ At+ \int_0^t f(x)\,dx+ \sum_{n=1}^\infty
\angles_n(t).
\end{displaymath}
Now, since
$f(t)=\Orden(t^{-1}) $ we can write this as
\begin{displaymath}
\pi\kappa(t)= At+ \sum_{n=1}^\infty\angles_n(t)+\Orden(\log t).
\end{displaymath}
\end{proof}

\section{Counting the zeros of $\zeta(s)$.}\label{counting}

The exact value of the constant $A$ in \eqref{pikappa} can be obtained in 
two ways. One by computing the constants in the Mittag-Leffler expansion 
of related functions and the second, more interesting for us,
by comparing two different counts of the
number of zeros of $\zeta(s)$. We will present this second proof. 
We need some definitions:

\def\O{\phantom{0}}

Let
\begin{align*}
N_-(T)&=\#\{\rho=\beta+i\gamma \O|\O \zeta(\rho)=0,\O
\beta<\medio,\O
0<\gamma\le T\}\\
N_0(T)&=\#\{\rho=\medio+i\gamma \O|\O\zeta(\rho)=0,\O
0<\gamma\le T\}\\
N'(T)&=\#\{\rho'=\beta'+i\gamma' \O|\O\zeta'(\rho')=0,\O 0<\gamma'\le T\}\\
N'_-(T)&=\#\{\rho'=\beta'+i\gamma' \O|\O\zeta'(\rho')=0,\O
\beta'<\medio,\O 0<\gamma'\le T\}\\
N'_0(T)&=\#\{\rho'=\medio+i\gamma' \O|\O\zeta'(\rho')=0,\O
0<\gamma'\le T\}\\
N'_+(T)&=\#\{\rho'=\beta'+i\gamma'\O|\O\zeta'(\rho')=0,\O
\beta'>\medio,\O 0<\gamma'\le T\}.
\end{align*}
In all these cases, as usual, we count the zeros with their
multiplicities. But we also need to consider another count
$N_{00}(T)$ which is the number of real numbers $0<\gg\le T$ such
that $\zeta(\medio+i\gg)=0$, but in these cases we do not count
multiplicities.

Taking account of Proposition \ref{P:doblezeros} it is clear that
\begin{equation}\label{zeroszetap}
N_0(T)-N_{00}(T)=N'_0(T)
\end{equation}
which equals the number of zeros of
$\zeta'(s)$ on the critical line with  $0<\gamma'\le T$.

We know some relations between these counts:

\noindent (i) Backlund refining previous work of von Mangoldt  (see Edwards, 
\cite{E}*{Section 6.7}) gave a complete proof of  Riemann's assertion
\begin{equation}\label{Mangoldt}
N(T)=N_0(T)+2N_-(T)=
\frac{T}{2\pi}\log\frac{T}{2\pi}-\frac{T}{2\pi}+\Orden(\log T).
\end{equation}

\noindent (ii) Berndt \cite{B} proved the corresponding result for $\zeta'(s)$
\begin{equation}\label{Berndt}
N'(T)=N'_-(T)+N'_0(T)+N'_+(T)=\frac{T}{2\pi}\log\frac{T}{4\pi}-\frac{T}{2\pi}+\Orden(\log
T).
\end{equation}

\noindent (iii) Levinson and Montgomery \cite{LM} showed that
\begin{equation}\label{Levinson}
N_-(T)=N'_-(T)+\Orden(\log T).
\end{equation}

\noindent (iv) From our Proposition \ref{P:kn0} we get that
\begin{equation}\label{N00}
\kappa(t) =N_{00}(t)+\Orden(1).
\end{equation}

Finally, in Proposition \ref{kappaN}   we will prove a new
relation \eqref{E:kappaN}. First we prove two lemmas about the zeros
of $\zeta'(s)$.

\begin{table}[H]
\caption{First non-trivial zeros of $\zeta(s)$ and $\zeta'(s)$}
\begin{center}
\begin{tabular}{|c|c|}
\hline
$\beta_n+i\gamma_n$ & $\beta'_n+i\gamma'_n$\\
\hline
\noalign{\smallskip}
\hline
$0.5+i\,14.13472\,51417$ & $2.46316\,18694+ i\,23.29832\,04927$\\
$0.5+i\,21.02203\,96387$ & $1.28649\,68222+i\,31.70825\,00831$\\
$0.5+i\,25.01085\,75801$ & $2.30757\,00637+i\,38.48998\,31730$\\
$0.5+i\,30.42487\,61258$ & $1.38276\,36057+i\,42.29096\,45545$\\
$0.5+i\,32.93506\,15877$ & $0.96468\,56227+i\,48.84715\,99050$\\
$0.5+i\,37.58617\,81588$ & $2.10169\,99009+i\,52.43216\,12451$\\
$0.5+i\,40.91871\,90121$ & $1.89595\,97624+i\,57.13475\,31990$\\
\hline
\end{tabular}
\end{center}
\label{default}
\end{table}%

\begin{lemma}
For $t>0$ we have
\begin{displaymath}
\sum_{0<\gamma'_n<t-1}\frac{1}{t-\gamma'_n}=\Orden(\log^2 t).
\end{displaymath}
\end{lemma}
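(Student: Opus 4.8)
The plan is to estimate the sum $\sum_{0<\gamma'_n<t-1}\frac{1}{t-\gamma'_n}$ by grouping the zeros $\rho'_n$ of $\zeta'(s)$ according to how close their ordinate $\gamma'_n$ is to $t$, using Berndt's counting formula \eqref{Berndt} to control the number of terms in each group. First I would dyadically partition the interval $(0,t-1)$ of relevant ordinates by the distance $t-\gamma'_n$: for each integer $k$ with $1\le 2^k\le t$, let $B_k=\{n : 2^{k-1}\le t-\gamma'_n< 2^k\}$. Each term with $n\in B_k$ contributes at most $2^{1-k}$ to the sum, so the total contribution of $B_k$ is at most $2^{1-k}\,\#B_k$.

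Next I would bound $\#B_k$. The set $B_k$ consists of zeros whose ordinates lie in the window $(t-2^k,\,t-2^{k-1}]$, which has length $2^{k-1}$. By \eqref{Berndt}, the number of zeros $\rho'=\beta'+i\gamma'$ of $\zeta'(s)$ with ordinates in any interval $(a,b]\subseteq(0,t]$ is $N'(b)-N'(a)=\frac{b}{2\pi}\log\frac{b}{4\pi}-\frac{a}{2\pi}\log\frac{a}{4\pi}-\frac{b-a}{2\pi}+\Orden(\log t)$; since the density $\frac{1}{2\pi}\log\frac{\cdot}{4\pi}$ is $\Orden(\log t)$ throughout, this count is $\Orden\bigl((b-a)\log t\bigr)+\Orden(\log t)=\Orden\bigl((1+(b-a))\log t\bigr)$. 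Applied to our window, $\#B_k=\Orden\bigl((1+2^{k-1})\log t\bigr)$. Therefore the contribution of $B_k$ is $\Orden\bigl(2^{1-k}(1+2^{k-1})\log t\bigr)=\Orden(\log t)$, uniformly in $k$.

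Finally I would sum over the $\Orden(\log t)$ admissible values of $k$ (namely $1\le k\le \log_2 t + \Orden(1)$): the full sum is $\Orden(\log t)\cdot\Orden(\log t)=\Orden(\log^2 t)$, which is the claimed bound. The only point requiring a little care is that the dyadic blocks must genuinely cover all indices with $0<\gamma'_n<t-1$ and that the smallest block (corresponding to $t-\gamma'_n$ near $1$) is handled correctly — but since we restrict to $\gamma'_n<t-1$ we have $t-\gamma'_n>1$, so $2^k\ge 2$ already, and no term has distance less than $1$; the main obstacle, such as it is, is simply packaging the interval-counting estimate from \eqref{Berndt} uniformly, which is routine. (For small $t$, say $t\le 2$, the sum is empty and the statement is trivial, so we may assume $t$ large.)
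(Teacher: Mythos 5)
Your argument is correct, and it reaches the bound by a genuinely different decomposition than the one in the paper. You split the sum into dyadic blocks according to the distance $t-\gamma'_n$ and control the number of ordinates in each window via Berndt's formula \eqref{Berndt}, getting a contribution $\Orden(\log t)$ per block and $\Orden(\log t)$ blocks. The paper instead writes the sum as a Riemann--Stieltjes integral $\int_{4\pi}^{t-1}\frac{dN'(x)}{t-x}$, substitutes $N'(x)=\frac{x}{2\pi}\log\frac{x}{4\pi}-\frac{x}{2\pi}+R(x)$ with $R(x)=\Orden(\log x)$, evaluates the smooth main term directly (it contributes at most $\log(t/4\pi)\,\log(t-4\pi)/2\pi$), and handles the $dR$ term by integration by parts. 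Both routes use exactly the same arithmetic input (Berndt's count) and produce the same $\log t\times\log t$ structure, one factor coming from the local density of the $\gamma'_n$ and one from summing over scales (respectively, from the logarithmic integral). Your version needs only an upper bound for the number of ordinates in a window, so you never have to integrate against the signed error term $R$; the price is the covering bookkeeping you mention, which is indeed routine. One small imprecision: the density $\frac{1}{2\pi}\log\frac{x}{4\pi}$ is not $\Orden(\log t)$ uniformly down to $x=0$, but this is harmless here because you only need an upper bound on a nonnegative count and $N'$ vanishes below the first ordinate $\gamma'_1\approx 23.3$.
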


\begin{proof} Put
\begin{displaymath}
N'(t)=\frac{t}{2\pi}\log\frac{t}{4\pi}-\frac{t}{2\pi}+R(t)
\end{displaymath}
where  $R(t)=\Orden(\log t)$ by Berndt's Theorem. The first zero of
$\zeta'(s)$ is $\rho'\approx 2.46316 +23.29832 i$ so that $N'(4\pi)=0$ and consequently $R(4\pi)=2$. We have
\begin{displaymath}
\sum_{0<\gamma'_n<t-1}\frac{1}{t-\gamma'_n}=\int_{4\pi}^{t-1}\frac{d
N'(x)}{t-x}=\frac{1}{2\pi}\int_{4\pi}^{t-1}\frac{\log(x/4\pi)}{t-x}\,dx+
\int_{4\pi}^{t-1}\frac{d R(x)}{t-x}
\end{displaymath}
\begin{displaymath}
\le\log(t/4\pi)\frac{\log(t-4\pi)}{2\pi}+R(t-1)+
\int_{4\pi}^{t-1}\frac{|R(x)|}{(t-x)^2}\,dx.
\end{displaymath}
Since $R(x)=\Orden(\log x)$  all the above terms are $\Orden(\log^2
t)$.
\end{proof}

\begin{lemma} For $t\to+\infty$ we have
\begin{displaymath}
\sum_{\gamma'_n>t+1}
\frac{1}{\gamma'_n{}^2-t^2}=\Orden\Bigl(\frac{\log^2 t}{t}\Bigr).
\end{displaymath}
\end{lemma}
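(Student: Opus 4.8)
The plan is to follow the same Stieltjes-integral strategy as in the preceding lemma, but to first split the sum at $\gamma'_n=2t$ so that the factor $\gamma'_n+t$ in the factorization $\gamma'_n{}^2-t^2=(\gamma'_n-t)(\gamma'_n+t)$ can be controlled cleanly on each piece. As before, write
\[
N'(x)=\frac{x}{2\pi}\log\frac{x}{4\pi}-\frac{x}{2\pi}+R(x),\qquad R(x)=\Orden(\log x)
\]
by Berndt's Theorem \eqref{Berndt}, so that $dN'(x)=\tfrac{1}{2\pi}\log\tfrac{x}{4\pi}\,dx+dR(x)$.

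For the tail $\gamma'_n>2t$ one has $\gamma'_n{}^2-t^2\ge\tfrac34\gamma'_n{}^2$, hence
\[
\sum_{\gamma'_n>2t}\frac{1}{\gamma'_n{}^2-t^2}\le\frac{4}{3}\int_{2t}^{\infty}\frac{dN'(x)}{x^2},
\]
and bounding $dN'(x)$ by $\Orden(\log x)\,dx$ (plus an integration by parts for the $dR$ part) gives $\Orden(\log t/t)$. For the middle range $t+1<\gamma'_n\le 2t$ we use $\gamma'_n+t>2t$ to obtain
\[
\sum_{t+1<\gamma'_n\le 2t}\frac{1}{\gamma'_n{}^2-t^2}\le\frac{1}{2t}\int_{t+1}^{2t}\frac{dN'(x)}{x-t}.
\]
Here the smooth part contributes $\tfrac{1}{2\pi}\int_{t+1}^{2t}\tfrac{\log(x/4\pi)}{x-t}\,dx\le\tfrac{\log(t/2\pi)}{2\pi}\log t=\Orden(\log^2 t)$, while the $dR$ part, after integration by parts, is $\tfrac{R(2t)}{t}-R(t+1)+\int_{t+1}^{2t}\tfrac{R(x)}{(x-t)^2}\,dx=\Orden(\log t)$, since $R(x)=\Orden(\log x)$ and $\int_{t+1}^{2t}(x-t)^{-2}\,dx=1-t^{-1}$. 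Thus this range contributes $\tfrac{1}{2t}\Orden(\log^2 t)=\Orden(\log^2 t/t)$, and adding the two estimates gives the assertion.

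The only point that needs care — and the reason one cannot simply transcribe the previous lemma — is that $\sum_{\gamma'_n>t+1}(\gamma'_n-t)^{-1}$ diverges, because the counting function grows like $x\log x$. The split at $2t$ fixes this: on the tail it produces honest $\gamma'_n{}^{-2}$ decay, and on the middle range it replaces the unruly factor $\gamma'_n+t$ by a quantity $\asymp t$, after which the same computation as in the preceding lemma applies. I expect this bookkeeping (keeping track of the $R$-term boundary contributions across the two ranges) to be the only mildly delicate part; everything else is routine.
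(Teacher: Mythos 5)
Your argument is correct, and it rests on the same engine as the paper's proof — write the sum as a Stieltjes integral against $N'(x)$, invoke Berndt's asymptotic \eqref{Berndt} to split off a smooth density $\frac{1}{2\pi}\log\frac{x}{4\pi}\,dx$ plus a remainder $dR(x)=\Orden(\log x)$ handled by integration by parts — but the way you tame $\frac{1}{x^2-t^2}$ is genuinely different. The paper keeps the integral $\int_{t+1}^{\infty}\frac{dN'(x)}{x^2-t^2}$ intact and estimates the smooth part by cutting at $x=t^2$, using the exact primitive $\frac{1}{2t}\log\frac{x-t}{x+t}$ on $[t+1,t^2]$ (which is where both the $\frac1t$ and one factor of $\log t$ come from) and a crude $\log(x/4\pi)\le\sqrt{x}$ bound beyond $t^2$. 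You instead cut the \emph{sum} at $\gamma'_n=2t$, extract the factor $\frac{1}{2t}$ from $\gamma'_n+t>2t$ on the near range so that the remaining computation is literally the one from the preceding lemma over $(t+1,2t]$, and use $\gamma'_n{}^2-t^2\ge\frac34\gamma'_n{}^2$ on the tail. Your route has the advantage of making transparent why the previous lemma's computation reappears and why a naive bound $\frac{1}{\gamma'_n{}^2-t^2}\le\frac{1}{2t(\gamma'_n-t)}$ over the whole range would fail (divergence of $\sum(\gamma'_n-t)^{-1}$), a point you rightly flag; the paper's route avoids any splitting of the sum at the cost of a slightly more ad hoc treatment of the smooth integral. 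Both yield $\Orden(\log^2 t/t)$, with the dominant contribution in each case coming from the region $\gamma'_n$ within a bounded multiple of $t$. All your individual estimates check out (the boundary terms $\frac{R(2t)}{t}-R(t+1)$, the bound $\int_{t+1}^{2t}(x-t)^{-2}\,dx=1-t^{-1}$, and the tail integrals), so there is nothing to repair.
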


\begin{proof} Retaining the notations of the previous lemma we have
\begin{displaymath}
\sum_{\gamma'_n>t+1}
\frac{1}{\gamma'_n{}^2-t^2}=\int_{t+1}^{+\infty}\frac{dN'(x)}{x^2-t^2}=
\frac{1}{2\pi}\int_{t+1}^{\infty}\frac{\log(x/4\pi)}{x^2-t^2}\,dx+
\int_{t+1}^{+\infty}\frac{dR(x)}{x^2-t^2}.
\end{displaymath}
For $t>4\pi$ the first integral is less than or equal to $C\log^2 t /t$:
\begin{displaymath}
\frac{1}{2\pi}\int_{t+1}^{\infty}\frac{\log(x/4\pi)}{x^2-t^2}\,dx\le\frac{2\log
t}{2\pi}\int_{t+1}^{t^2}\frac{dx}{x^2-t^2}+
\int_{t^2}^{+\infty}\frac{\sqrt{x}}{x^2-t^2}\,dx
\end{displaymath}
\begin{displaymath}
= \frac{\log t}{2\pi t}\Bigl(\log\frac{t-1}{t+1}+\log(2t+1)\Bigr)+
t^{-1/2}\int_t^{+\infty}\frac{\sqrt{y}}{y^2-1}\,dy \le \frac{(\log
t)^2}{\pi t}+\frac{4}{t}.
\end{displaymath}

Now we bound the second integral
\begin{displaymath}
\int_{t+1}^{+\infty}\frac{dR(x)}{x^2-t^2}=-\frac{R(t+1)}{2t+1}+\int_{t+1}^{+\infty}
\frac{R(x)}{(x^2-t^2)^2}2x\,dx.
\end{displaymath}
For $x>t+1$ we
have $x/(x^2-t^2)<1$, and $|R(x)|\le C\log x$. Thus we have
\begin{displaymath}
\int_{t+1}^{+\infty}\frac{dR(x)}{x^2-t^2}\le c_1\frac{\log
t}{t}+c_2\int_{t+1}^{+\infty}\frac{\log
x}{x^2-t^2}\,dx.
\end{displaymath}
Finally, this integral is bounded
exactly as the first integral.
\end{proof}

\begin{proposition}\label{kappaN}
For $t\to+\infty$
\begin{equation}\label{E:kappaN}
\pi\kappa(t)=At+\pi N'_+(t)-\pi N'_-(t)+\Orden(\log^2 t).
\end{equation}
\end{proposition}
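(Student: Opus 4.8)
The plan is to start from the formula \eqref{pikappa}, namely
\[
\pi\kappa(t)=At+\sum_{n=1}^\infty\angles_n(t)+\Orden(\log t),
\]
and to evaluate the sum $\sum_n\angles_n(t)$ up to an error of $\Orden(\log^2 t)$, showing that it equals $\pi N'_+(t)-\pi N'_-(t)+\Orden(\log^2 t)$. The geometric meaning of $\angles_n(t)$ is the signed angle at $\rho'_n$ subtended by the segment from $\medio-it$ to $\medio+it$; as $t\to+\infty$ this angle tends to $+\pi$ if $\beta'_n>\medio$ and to $-\pi$ if $\beta'_n<\medio$, while it is identically $0$ if $\beta'_n=\medio$. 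So morally each zero with $\beta'_n>\medio$ and $\gamma'_n<t$ contributes $\approx\pi$, each zero with $\beta'_n<\medio$ and $\gamma'_n<t$ contributes $\approx-\pi$, and the remaining zeros contribute something small; this is exactly the shape of \eqref{E:kappaN}. The work is in controlling the discrepancies.

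First I would split the sum according to the position of $\gamma'_n$ relative to $t$: the ``bulk'' zeros with $0<\gamma'_n<t-1$, the ``near'' zeros with $t-1\le\gamma'_n\le t+1$, and the ``far'' zeros with $\gamma'_n>t+1$. For a bulk zero, using $\angles_n(t)=\arctan\frac{t-\gamma'_n}{\beta'_n-\frac12}+\arctan\frac{t+\gamma'_n}{\beta'_n-\frac12}$ together with $|\beta'_n-\medio|\le E-\medio=\Orden(1)$ and $\arctan x=\sgn(x)\frac\pi2-\Orden(1/|x|)$, one gets $\angles_n(t)=\sgn(\beta'_n-\medio)\pi+\Orden\!\bigl(\frac{1}{t-\gamma'_n}\bigr)+\Orden\!\bigl(\frac{1}{t+\gamma'_n}\bigr)$, and summing these error terms over $0<\gamma'_n<t-1$ is exactly the first Lemma above, giving $\Orden(\log^2 t)$; the sum of the leading terms is $\pi(N'_+(t-1)-N'_-(t-1))=\pi N'_+(t)-\pi N'_-(t)+\Orden(\log t)$ by Berndt's estimate \eqref{Berndt}. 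For the far zeros, since $\gamma'_n>t+1$ both arctangents have small argument, so $|\angles_n(t)|=\Orden\!\bigl(\frac{t}{\gamma'_n-t}\bigr)$, and in fact (combining the two terms) $|\angles_n(t)|=\Orden\!\bigl(\frac{t}{\gamma'_n{}^2-t^2}\bigr)$ which by the second Lemma sums to $\Orden(\log^2 t)$. For the near zeros, there are $\Orden(\log t)$ of them by Berndt, each contributing a bounded angle $|\angles_n(t)|\le\pi$, so their total is $\Orden(\log t)$; one must also note that the count $N'_+(t)-N'_-(t)$ changes by at most $\Orden(\log t)$ as we move the cutoff across this window, which is again absorbed.

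Collecting the three pieces, $\sum_n\angles_n(t)=\pi N'_+(t)-\pi N'_-(t)+\Orden(\log^2 t)$, and plugging into \eqref{pikappa} (whose own error $\Orden(\log t)$ is dominated) yields \eqref{E:kappaN}. I expect the main obstacle to be the bookkeeping in the bulk term: one needs the two-sided bound on $\beta'_n$ from part (c) of the Proposition on zeros of $\zeta'(s)$ (so that $\beta'_n-\medio$ stays bounded away from causing trouble — here only boundedness above matters, since $\beta'_n>0$), and one must be a little careful that $\arctan\frac{t+\gamma'_n}{\beta'_n-\frac12}$ also contributes $\sgn(\beta'_n-\medio)\frac\pi2$ with an error $\Orden\bigl(\frac{|\beta'_n-\frac12|}{t+\gamma'_n}\bigr)=\Orden\bigl(\frac{1}{t+\gamma'_n}\bigr)$, whose sum over all zeros up to height $t$ is $\Orden(\log^2 t)$ as well (dominated by $\sum_{\gamma'_n<t}1/\gamma'_n\cdot$ a convergent-type estimate, or more crudely by the first Lemma applied with $t$ replaced by a larger parameter). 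Everything else is a direct assembly of the two Lemmas, Berndt's formula \eqref{Berndt}, and \eqref{pikappa}.
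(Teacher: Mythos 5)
Your proposal is correct and follows essentially the same route as the paper: the same three-way split of the sum $\sum_n\angles_n(t)$ at $\gamma'_n=t\pm1$, the same use of the two lemmas for the bulk and far ranges, and Berndt's estimate for the $\Orden(\log t)$ near-range count. The only cosmetic difference is that you track the $O(1/(t+\gamma'_n))$ term separately, whereas the paper absorbs it by bounding $\pi-|\angles_n(t)|$ by $2\arctan\frac{|\beta'_n-1/2|}{t-\gamma'_n}$ directly; both are fine.
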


\begin{proof}
By \eqref{pikappa} we have to show that
\begin{displaymath}
\sum_{\gamma'_n>0}\angles_n(t)=\pi N'_+(t)-\pi N'_-(t)+\Orden(\log^2
t).
\end{displaymath}
To this end we will show that
\begin{displaymath}
\sum_{\substack{\gamma'_n>0\\\beta'_n>\medio}}\angles_n(t)=\pi
N'_+(t)+\Orden(\log^2 t);\qquad
\sum_{\substack{\gamma'_n>0\\\beta'_n<\medio}}\angles_n(t)=-\pi
N'_-(t)+\Orden(\log^2 t).
\end{displaymath}
To simplify the notation we will write $\sideset{}{^+}\sum$ to
denote a sum restricted to $\beta'_n>\medio$ and
$\sideset{}{^-}\sum$ for a sum restricted to $\beta'_n<\medio$.

We split the sums into three terms
\begin{displaymath}
\sideset{}{^+}\sum_{\gamma'_n>0}\angles_n(t)=
\sideset{}{^+}\sum_{0<\gamma'_n<t-1}\angles_n(t)+
\sideset{}{^+}\sum_{|\gamma'_n-t|\le1}
\angles_n(t)+\sideset{}{^+}\sum_{\gamma'_n>t+1}\angles_n(t).
\end{displaymath}
The middle sum is $\Orden(\log t)$ because each term is (in absolute
value) less than $\pi$ and the number of terms is $\Orden(\log
t)$. In the first sum the summands are approximately $\pi$ (or
$-\pi$).  Thus we arrive at
\begin{displaymath}
\sideset{}{^+}\sum_{\gamma'_n>0}\angles_n(t)=\pi N'_+(t)
+\sideset{}{^+}\sum_{0<\gamma'_n<t-1}\bigl\{\angles_n(t)-\pi\bigr\} +
\sideset{}{^+}\sum_{\gamma'_n>t+1}\angles_n(t)+\Orden(\log t)
\end{displaymath}
and 
\begin{displaymath}
\sideset{}{^-}\sum_{\gamma'_n>0}\angles_n(t)=-\pi N'_-(t)
+\sideset{}{^-}\sum_{0<\gamma'_n<t-1}\bigl\{\angles_n(t)+\pi\bigr\} +
\sideset{}{^-}\sum_{\gamma'_n>t+1}\angles_n(t)+\Orden(\log t).
\end{displaymath}
It follows that
\begin{displaymath}
\sum_{\gamma'_n>0}\angles_n(t)=\pi N'_+(t)-\pi N'_-(t)+
\sum_{0<\gamma'_n<t-1}\bigl\{\angles_n(t)\pm\pi\bigr\}+
\sum_{\gamma'_n>t+1}\angles_n(t)+\Orden(\log t)
\end{displaymath}
where we must use the $+$ sign when $\beta'_n<\medio$ and the $-$
sign when $\beta'_n>\medio$.

Now for $0<\gamma'_n<t-1$ and $\beta'_n>\medio$ we have
\begin{displaymath}
0<\pi-\angles_n(t)<
\arctan\frac{\beta'_n-\medio}{t-\gamma'_n}+\arctan\frac{\beta'_n-\medio}{t+\gamma'_n}
<2\arctan\frac{\beta'_n-\medio}{t-\gamma'_n}<\frac{6}{t-\gamma'_n}
\end{displaymath}
and in the case $\beta'_n<\medio$ analogously
\begin{displaymath}
0<\pi+\angles_n(t)<
\arctan\frac{\medio-\beta'_n}{t-\gamma'_n}+\arctan\frac{\medio-\beta'_n}{t+\gamma'_n}
<2\arctan\frac{\medio-\beta'_n}{t-\gamma'_n}<\frac{1}{t-\gamma'_n}.
\end{displaymath}

Also, for $\gamma'_n>t+1$ and $\beta'_n>\medio$
\begin{displaymath}
0<\angles_n(t)=\arctan\frac{2(\beta'_n-\medio)t}{(\beta'_n-\medio)^2+\gamma'_n{}^2-t^2}
<\frac{6t}{\gamma'_n{}^2-t^2}
\end{displaymath}
and for $\beta'_n<\medio$ the absolute value $|\angles_n(t)|$ is
bounded by the same quantity.

Thus applying the two above lemmas we find that
\begin{displaymath}
\sum_{\gamma'_n>0}\angles_n(t)=\pi N'_+(t)-\pi N'_-(t)+ \Orden(\log^2
t).
\end{displaymath}
\end{proof}

\begin{corollary} For $t\to+\infty$
\begin{equation}\label{JuanJan}
N_{00}(t)=\frac{A}{\pi}t+N'_+(t)-N'_-(t)+\Orden(\log^2 t).
\end{equation}
\end{corollary}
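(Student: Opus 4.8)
The plan is to derive \eqref{JuanJan} directly by combining the two facts already established, namely the asymptotic relation \eqref{E:kappaN} of Proposition \ref{kappaN} and the counting identity \eqref{N00} coming from Proposition \ref{P:kn0}. I would first recall from \eqref{Nkappa} that for $t>a_\vartheta$ one has $N_{00}(t)=\lfloor\kappa(t)\rfloor$, hence $\kappa(t)=N_{00}(t)+\Orden(1)$ as $t\to+\infty$, which is exactly \eqref{N00}; the $\Orden(1)$ term is in fact the fractional part $\{\kappa(t)\}\in[0,1)$.

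Next I would take Proposition \ref{kappaN}, which gives
\[
\pi\kappa(t)=At+\pi N'_+(t)-\pi N'_-(t)+\Orden(\log^2 t)\qquad(t\to+\infty),
\]
substitute $\kappa(t)=N_{00}(t)+\Orden(1)$ into the left-hand side, and divide through by $\pi$. This yields
\[
N_{00}(t)+\Orden(1)=\frac{A}{\pi}\,t+N'_+(t)-N'_-(t)+\Orden(\log^2 t),
\]
and since $\Orden(1)=\Orden(\log^2 t)$ the stray bounded term is absorbed into the error, giving \eqref{JuanJan}.

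There is essentially no obstacle here: the corollary is a bookkeeping consequence of Proposition \ref{kappaN} and the identity \eqref{Nkappa}. The only point deserving a word of care is that \eqref{Nkappa} is valid only for $t>a_\vartheta$, so the statement is understood asymptotically as $t\to+\infty$, which is exactly how it is phrased; no uniformity beyond that is needed, since all error terms are already $\Orden(\log^2 t)$.
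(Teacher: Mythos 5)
Your proposal is correct and matches the paper's own proof, which simply combines \eqref{E:kappaN} with \eqref{N00}; your remarks about the fractional part and the absorption of the $\Orden(1)$ term into $\Orden(\log^2 t)$ only spell out what the paper leaves implicit.
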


\begin{proof}
Combine \eqref{E:kappaN} with \eqref{N00}.
\end{proof}

\begin{corollary}\label{main}
The constant $A$ is equal to $\frac12\log2$.
\end{corollary}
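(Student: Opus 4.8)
The plan is to pin down $A$ by comparing two independent asymptotic evaluations of the counting function $N_0(t)$ of zeros of $\zeta(s)$ on the critical line: the classical one, obtained from the Riemann--von~Mangoldt formula \eqref{Mangoldt} together with the Levinson--Montgomery comparison \eqref{Levinson}, and the one furnished by \eqref{JuanJan}, in which the constant $A$ appears explicitly. Forcing the two to agree leaves only the value of $A$ undetermined, and a single limit computation fixes it. (This is the ``second proof'' announced at the beginning of this section; the alternative via the Mittag--Leffler constants of \eqref{E:A} is not needed.)

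Concretely, I would start from \eqref{JuanJan},
\[
N_{00}(t)=\frac{A}{\pi}t+N'_+(t)-N'_-(t)+\Orden(\log^2 t),
\]
and use \eqref{zeroszetap} to replace $N_{00}(t)$ by $N_0(t)-N'_0(t)$. Since $N'(t)=N'_-(t)+N'_0(t)+N'_+(t)$, one has $N'_+(t)+N'_0(t)-N'_-(t)=N'(t)-2N'_-(t)$, so the identity becomes
\[
N_0(t)=\frac{A}{\pi}t+N'(t)-2N'_-(t)+\Orden(\log^2 t).
\]
Now insert Berndt's formula \eqref{Berndt} for $N'(t)$, and use the Levinson--Montgomery relation \eqref{Levinson} to replace $N'_-(t)$ by $N_-(t)$ at the cost of $\Orden(\log t)$; this gives
\[
N_0(t)+2N_-(t)=\frac{A}{\pi}t+\frac{t}{2\pi}\log\frac{t}{4\pi}-\frac{t}{2\pi}+\Orden(\log^2 t).
\]
On the other hand, by \eqref{Mangoldt} the left-hand side is exactly $N(t)=\frac{t}{2\pi}\log\frac{t}{2\pi}-\frac{t}{2\pi}+\Orden(\log t)$.

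Equating the two expressions for $N_0(t)+2N_-(t)$, the $\frac{t}{2\pi}\log t$ terms and the $-\frac{t}{2\pi}$ terms cancel, and since $\log\frac{t}{2\pi}-\log\frac{t}{4\pi}=\log 2$ there remains
\[
\frac{t}{2\pi}\log 2=\frac{A}{\pi}t+\Orden(\log^2 t).
\]
Dividing by $t$ and letting $t\to+\infty$ yields $\frac{\log 2}{2\pi}=\frac{A}{\pi}$, that is, $A=\frac12\log 2$. There is no genuine obstacle here: the argument is bookkeeping with the four known counting asymptotics \eqref{Mangoldt}, \eqref{Berndt}, \eqref{Levinson}, \eqref{zeroszetap} and the formula \eqref{JuanJan}. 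The only point to watch is that all the error terms are truly $o(t)$ (indeed $\Orden(\log^2 t)$), so that the leading $t\log t$ and $t$ contributions coming from the Riemann and Berndt formulas cancel \emph{exactly}, leaving the linear term $\frac{t}{2\pi}\log 2$ as the sole surviving discrepancy — which is precisely what measures $A$.
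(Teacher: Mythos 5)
Your argument is correct and is essentially the paper's own proof: both pin down $A$ by combining \eqref{JuanJan} with \eqref{zeroszetap}, \eqref{Berndt}, \eqref{Levinson} and \eqref{Mangoldt} so that the $t\log t$ and $-t/(2\pi)$ terms cancel and the surviving linear discrepancy $\frac{t}{2\pi}\log 2$ equals $\frac{A}{\pi}t$ up to $\Orden(\log^2 t)$. The only difference is cosmetic — you solve for $N_0(t)+2N_-(t)$ directly, whereas the paper organizes the same substitutions as a chain of congruences modulo $\Orden(\log^2 t)$ starting from $N(t)$.
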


\begin{proof} Write $f(t)\circeq g(t)$ to denote that
$f(t)-g(t)=\Orden(\log^2 t)$. (In the same way as  congruences we
can operate with $\circeq$ as if it were an equality sign between
equivalence classes). With this notation we have
\begin{alignat*}{2}
N(t)&\circeq\frac{t}{2\pi}\log\frac{t}{2\pi}-\frac{t}{2\pi}&\quad&\text{by
\eqref{Mangoldt}}\\
&\circeq N_0(t)+2N_-(t)&\quad&\text{trivially}\\
&\circeq N_0(t)+2N'_-(t)&\quad&\text{by \eqref{Levinson}}\\
&\circeq N_0(t)-N_{00}(t)+\frac{A}{\pi}t+N'_+(t)+N'_-(t)&\quad&\text{by \eqref{JuanJan}}\\
&\circeq N_0(t)-N_{00}(t)+
\frac{A}{\pi}t-N'_0(t)+\frac{t}{2\pi}\log\frac{t}{4\pi}-\frac{t}{2\pi}&\quad&\text{by
\eqref{Berndt}}\\
&\circeq\frac{A}{\pi}t+\frac{t}{2\pi}\log\frac{t}{4\pi}-\frac{t}{2\pi}&\quad&\text{by
\eqref{zeroszetap}}
\end{alignat*}

We thus have
\begin{displaymath}
\frac{t}{2\pi}\log\frac{t}{2\pi}-\frac{t}{2\pi}\circeq 
\frac{A}{\pi}t+\frac{t}{2\pi}\log\frac{t}{4\pi}-\frac{t}{2\pi}
\end{displaymath}
from which we deduce
\begin{displaymath}
\frac{A}{\pi}t\circeq  \frac{t}{2\pi}\log2.
\end{displaymath}
Hence $A=\medio\log2$. 
\end{proof}

\begin{proposition}\label{RHimpliesP}
The Riemann hypothesis implies the Hypothesis $\SP$.
\end{proposition}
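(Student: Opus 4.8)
The plan is to read $\kappa'(t)$ off the representation \eqref{E:kappaprime}, now with the explicit value $A=\frac12\log 2$ supplied by Corollary \ref{main}, and to exploit the fact that the Riemann hypothesis forces every term of the series over the zeros of $\zeta'(s)$ to be nonnegative. Under RH, $\zeta'(s)$ has no zero with $0<\Re s<\frac12$ (this is the content of the equivalence recalled above), and by the same Proposition it has no complex zero with $\Re s\le 0$; together with Proposition \ref{P:doblezeros} (a zero of $\zeta'$ on the critical line is merely a multiple zero of $\zeta$, which does not contradict RH) this shows that every complex zero $\rho'=\beta'+i\gamma'$ of $\zeta'(s)$ satisfies $\beta'\ge\frac12$. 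Hence each summand $\dfrac{\beta'-\frac12}{(\frac12-\beta')^2+(t-\gamma')^2}$ in \eqref{E:kappaprime} is $\ge 0$, and therefore
\[
\pi\kappa'(t)\ \ge\ A+f(t)\ =\ \tfrac12\log 2+f(t),\qquad t\in\R .
\]

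For $t$ away from the origin this already finishes the argument: by the bound \eqref{cota}, for $t>1$ one has $|f(t)|\le\dfrac{2}{1+4t^2}+\dfrac1t$, and the right-hand side is $<0.29<\tfrac12\log 2=0.34657\dots$ as soon as $t\ge 4$. Thus $\kappa'(t)>0$ for every $t\ge 4$.

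The remaining and genuinely delicate step is the bounded window $a_\kappa<t<4$, where $f(t)$ can be as negative as about $-2$ and the series is far too small to compensate. On this interval, however, $\zeta(s)$ has no zero (the first ordinate being $\approx 14.13$) and $\vartheta'(t)\ne 0$ (it vanishes only at $a_\vartheta\approx 6.29$), so here $\kappa'(t)$ is given by the elementary closed form \eqref{eqkappaprime} in $Z,Z',Z'',\vartheta',\vartheta''$, all real analytic and evaluable with rigorous error control on the compact set $[a_\kappa,4]$. Since $\kappa'$ is real analytic with $\kappa'(a_\kappa)=0$ and $\kappa'<0$ on $(-a_\kappa,a_\kappa)$, it suffices to evaluate $\kappa'$ along a sufficiently fine grid of $[a_\kappa,4]$ and to control the gaps by an a~priori bound on $|\kappa''|$ there; this confirms $\kappa'(t)>0$ on $(a_\kappa,4]$. (One may carry the numerical check out to a larger height if more slack is desired in the previous paragraph.) Combining the two ranges gives $\kappa'(t)>0$ for all $t>a_\kappa$, which in particular is Hypothesis $\SP$. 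The main obstacle is therefore not the analytic estimate at infinity but the effective, essentially computer-assisted verification on $[a_\kappa,4]$.
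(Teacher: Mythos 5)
Your argument is essentially the paper's own proof: under RH every complex zero of $\zeta'(s)$ has $\beta'\ge\frac12$, so the series in \eqref{E:kappaprime} is nonnegative and $\pi\kappa'(t)\ge \frac12\log 2+f(t)$, which together with the bound \eqref{cota} gives $\kappa'(t)>0$ beyond a modest height (the paper uses $t>3.4$, you use $t\ge 4$), the remaining compact interval being handled by direct (numerical) verification. You are in fact slightly more careful than the paper on two points --- using $\beta'\ge\frac12$ rather than the strict inequality, and spelling out that the window $(a_\kappa,3.4]$ requires an effective computation which the paper dismisses with ``it is clear'' --- but the route is the same.
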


\begin{proof}
The Riemann hypothesis is equivalent to $\beta'>\medio$ for every zero
$\rho'=\beta'+i\gamma'$, and  it follows  by \eqref{E:kappaprime} that if
the Riemann hypothesis is true, then $\pi\kappa'(t)>A+f(t)$.  
Since $A=\frac12\log2$, applying \eqref{cota} 
we  easily see that $\kappa'(t)>0$ for $t>3.4$ if we assume the RH. 
It is clear that there is an
$a_\kappa\ge0$ such that $\kappa'(t)>0$ for $t>a_\kappa$ and $a_\kappa<3.4$.
\end{proof}

\section{Connections between the zeros of $\zeta(s)$ and $\zeta'(s)$.}\label{kappaprimeandzeros}

\begin{proposition}\label{kappaprimeatzeros}
Let $\frac12+i\xi$ be a zero of $\zeta(s)$ of multiplicity $\omega$ on the critical line, 
then 
\begin{equation}
\kappa'(\xi) =\frac{1}{\pi\omega}\vartheta'(\xi).
\end{equation}
\end{proposition}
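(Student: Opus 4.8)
The plan is to start from the explicit formula \eqref{eqkappaprime} for $\kappa'(t)$ and pass to the limit $t\to\xi$. Write $\xi=\xi_n$ for the appropriate $n$. Since $\zeta(\medio+it)=Z(t)e^{-i\vartheta(t)}$ with $e^{-i\vartheta(t)}$ real analytic and non-vanishing, and since $t\mapsto\medio+it$ is conformal, the order of vanishing of $Z$ at $t=\xi$ equals the multiplicity $\omega$ of the zero $\medio+i\xi$ of $\zeta(s)$; moreover $\vartheta'(\xi)\neq0$, because by Corollary \ref{convexity} the function $\vartheta'$ vanishes on $(0,\infty)$ only at $a_\vartheta$, which is not a zero of $\zeta$. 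Hence near $\xi$ one has $Z(t)=\nu A(t-\xi)^\omega+\Orden(t-\xi)^{\omega+1}$ and $Z'(t)=\nu A\omega(t-\xi)^{\omega-1}+\Orden(t-\xi)^{\omega}$, with $A>0$ and $\nu=\pm1$, together with $\vartheta'(t)=\vartheta'(\xi)+\Orden(t-\xi)$ and $\vartheta''(t)=\Orden(1)$.

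For $t\neq\xi$ in a small neighborhood of $\xi$ we have $Z(t)\neq0$ (the zero of $Z$ at $\xi$ being isolated), so $t$ is not a multiple zero of $Z$ and \eqref{eqkappaprime} applies; and since $\kappa$ is real analytic, $\kappa'$ is continuous, so $\kappa'(\xi)=\lim_{t\to\xi}\kappa'(t)$. I would substitute the expansions above into \eqref{eqkappaprime}: one gets $ZZ'\vartheta''=\Orden(t-\xi)^{2\omega-1}$, $(Z')^2\vartheta'=A^2\omega^2\vartheta'(\xi)(t-\xi)^{2\omega-2}+\Orden(t-\xi)^{2\omega-1}$, and $ZZ''\vartheta'=A^2\omega(\omega-1)\vartheta'(\xi)(t-\xi)^{2\omega-2}+\Orden(t-\xi)^{2\omega-1}$ (these identities holding for every $\omega\ge1$, the case $\omega=1$ being harmless since then $\omega(\omega-1)=0$). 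Because $\omega^2-\omega(\omega-1)=\omega$, the numerator of \eqref{eqkappaprime} equals $A^2\omega\,\vartheta'(\xi)(t-\xi)^{2\omega-2}+\Orden(t-\xi)^{2\omega-1}$, while the denominator $(Z')^2+(Z\vartheta')^2$ equals $A^2\omega^2(t-\xi)^{2\omega-2}+\Orden(t-\xi)^{2\omega-1}$, the term $(Z\vartheta')^2$ being $\Orden(t-\xi)^{2\omega}$. Dividing, the common factor $A^2(t-\xi)^{2\omega-2}$ cancels and $\kappa'(t)=\frac{1}{\pi}\,\frac{\vartheta'(\xi)}{\omega}+\Orden(t-\xi)$; letting $t\to\xi$ gives $\kappa'(\xi)=\vartheta'(\xi)/(\pi\omega)$.

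An equivalent route avoids \eqref{eqkappaprime}: by \eqref{kappaph} and Proposition \ref{phaseint}, $\pi\kappa'(t)=-\Im\{g'(t)/g(t)\}$ with $g(t)=Z'(t)-iZ(t)\vartheta'(t)$; the expansions show that $g$ vanishes at $\xi$ to order exactly $\omega-1$, with $g(t)=\nu A\omega(t-\xi)^{\omega-1}\bigl(1+\beta(t-\xi)+\Orden(t-\xi)^{2}\bigr)$ for a complex number $\beta$ with $\Im\beta=-\vartheta'(\xi)/\omega$, so that $g'(t)/g(t)=(\omega-1)/(t-\xi)+\beta+\Orden(t-\xi)$; since $t-\xi$ is real the pole term is real and drops out of the imaginary part, whence $\lim_{t\to\xi}\Im\{g'(t)/g(t)\}=\Im\beta=-\vartheta'(\xi)/\omega$, and again $\kappa'(\xi)=\vartheta'(\xi)/(\pi\omega)$. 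I would present the first approach. The only real obstacle is keeping the Taylor bookkeeping straight and spotting the cancellation $\omega^2-\omega(\omega-1)=\omega$ in the numerator; the passage to the limit is legitimate purely by continuity of $\kappa'$, itself a consequence of the real analyticity of $\kappa$ established earlier.
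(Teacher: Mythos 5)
Your proposal is correct and follows essentially the same route as the paper: both start from \eqref{eqkappaprime}, use that $Z$ vanishes to order exactly $\omega$ at $\xi$, and pass to the limit $t\to\xi$ by continuity of $\kappa'$ (the paper divides numerator and denominator by $(Z')^2$ and evaluates $\lim Z/Z'=0$ and $\lim (Z/Z')(Z''/Z')=(\omega-1)/\omega$, which is just a repackaging of your Taylor bookkeeping and of your cancellation $\omega^2-\omega(\omega-1)=\omega$). The remark that $\vartheta'(\xi)\neq 0$ is harmless but not actually needed.
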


\begin{proof}
Since $\zeta(\frac12+it)=e^{-i\vartheta(t)}Z(t)$ the function $Z(t)$ has a zero of 
multiplicity $\omega$ at $t=\xi$. 
Hence
$\lim_{t\to\xi}\frac{Z(t)}{Z'(t)} = 0$, and for $\omega\ge2$, 
$\lim_{t\to\xi}\frac{Z(t)}{Z'(t)}\frac{Z''(t)}{Z'(t)} = \frac{\omega-1}{\omega}$.  
When $\omega=1$ this second limit 
is equal to $0=\frac{\omega-1}{\omega}$. 

Hence  for $0<|t-\xi|<\delta$ we have $Z(t)$, $Z'(t)$ and $Z''(t)\ne0$ and 
by \eqref{eqkappaprime} we have
\begin{displaymath}
\lim_{t\to\xi}\kappa'(t)=\lim_{t\to\xi}\frac{1}{\pi}\frac{\frac{Z}{Z'}\vartheta''+\vartheta'-
\frac{Z}{Z'}\frac{Z''}{Z'}\vartheta'}{1+\left(\frac{Z}{Z'}\vartheta'\right)^2}=
\frac{1}{\pi}\Bigl(\vartheta'(\xi)-\frac{\omega-1}{\omega}\vartheta'(\xi)\Bigr)=
\frac{1}{\pi\omega}\vartheta'(\xi).
\end{displaymath}
\end{proof}

Assuming the RH and the simplicity of zeros we have 
\begin{displaymath}
\int_{a_\vartheta}^{\gamma_n}\kappa'(t)\,dt=n.
\end{displaymath}
Hence the mean value of $\kappa'(t)$ in $[0,t]$ is $\frac{N(t)}{t}$
which is approximately equal to
$\vartheta'(t)/\pi$. The above Proposition says that, assuming only the
simplicity of zeros,  at the points $\xi_n$  the value $\kappa'(\xi_n)$ is 
just equal to this density. 

\begin{figure}[H]
  \includegraphics[width=\hsize]{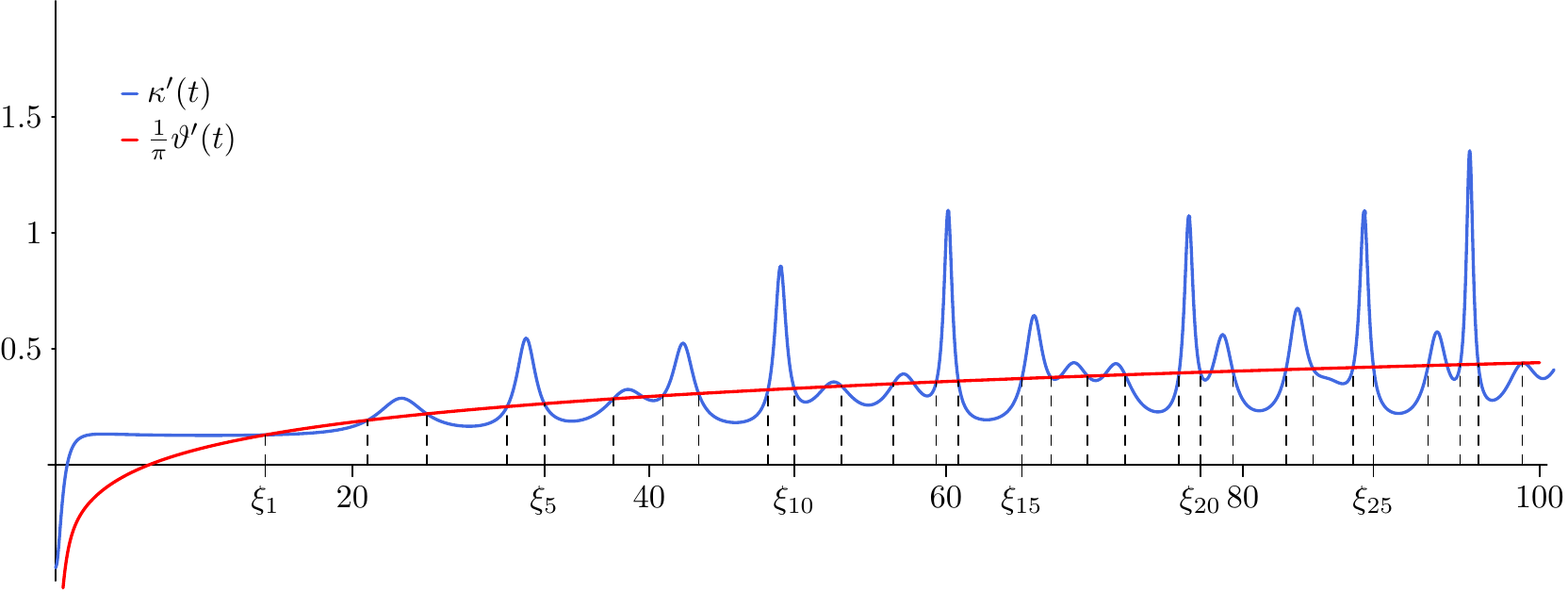}
  \caption{$\kappa(t)$}
  \label{F:two ways}
\end{figure}

Figure \ref{F:two ways} illustrates two ways in which the zeros of $\zeta'(s)$ determine the $\gg_n$
(assuming only simplicity of the zeros of zeta).   First $\xi_n$ is determined from 
$\kappa'(t)$  by the equation 
\begin{equation}\label{E:45}
\int_{\xi_0}^{\xi_n}\kappa'(t)\,dt=n\quad \text{or}\quad \int_{\xi_{n-1}}^{\xi_n}\kappa'(t)\,dt=1.
\end{equation}
Second, the points $\xi_n$ are intersections of the two curves $\kappa'(t)$ and 
$\vartheta'(t)/\pi$. Although, as we see in figure \ref{F:two ways} not all these intersections correspond
to points $\xi_n$. 

We can see how two close $\xi_n$ correspond to a peak in the graph of $\kappa'(t)$
that, according to equation \eqref{E:kappaprime} will be produced by one or more zeros 
$\beta'+i\gamma'$ of $\zeta'(s)$ with a relatively small $\beta'-\frac12$. 
Observe that equation \eqref{E:kappaprime} shows that $\kappa'(t)$ is fully determined by
the zeros of $\zeta'(s)$. 

Following these ideas we may improve (but assuming the RH) a theorem due to 
M. Z. Garaev, C. Y. Y{\i}ld{\i}r{\i}m \cite{GY}. 
For any given zero $\rho'=\beta'+i\gamma'$ of $\zeta'(s)$  let $\gamma_c$ be 
of all ordinates of zeros of
$\zeta(s)$, the one for which $|\gamma_c-\gamma'|$ is smallest (if there
are more than one such zero of $\zeta(s)$, take $\gamma_c$ to be the imaginary part
of any one of them).  Garaev and Y{\i}ld{\i}r{\i}m prove unconditionally
that $|\gamma_c-\gamma'|\ll |\beta'-\medio|^{\frac12}$.

\begin{proposition}[RH]\label{T:GY}
Assuming the RH, we have for any zero 
$\beta'+i\gamma'$ of $\zeta'(s)$ 
\begin{displaymath}
|\gamma_c-\gamma'|\le 1.9 \;|\beta'-\medio|^{\frac12}.
\end{displaymath}
\end{proposition}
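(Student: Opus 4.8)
The plan is to exploit the formula \eqref{E:kappaprime} for $\pi\kappa'(t)$, which under the RH reads
\[
\pi\kappa'(t)=\tfrac12\log 2+f(t)+\sum_{\rho'=\beta'+i\gamma'}\frac{\beta'-\tfrac12}{(\beta'-\tfrac12)^2+(t-\gamma')^2},
\]
together with the fact (Prop.~\ref{P:17}, Prop.~\ref{monotonous}) that between consecutive ordinates $\gg_n$ the increment $\int_{\gg_n}^{\gg_{n+1}}\kappa'(t)\,dt=1$. Fix a zero $\rho'=\beta'+i\gamma'$ with $\delta:=\beta'-\tfrac12>0$ (by the RH, $\delta>0$; if $\beta'=\tfrac12$ then $\gamma'=\gamma_c$ by Prop.~\ref{P:doblezeros}-type reasoning and there is nothing to prove). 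The single term of the sum coming from $\rho'$ contributes, over an interval of length $2\ell$ centred at $\gamma'$,
\[
\int_{\gamma'-\ell}^{\gamma'+\ell}\frac{\delta}{\delta^2+(t-\gamma')^2}\,dt
=2\arctan\frac{\ell}{\delta},
\]
and all the other terms of the sum are nonnegative (RH again), so $\int_{\gamma'-\ell}^{\gamma'+\ell}\pi\kappa'(t)\,dt\ge 2\arctan(\ell/\delta)-\big(\tfrac12\log2+\sup|f|\big)\cdot 2\ell$ — wait, more precisely one keeps the $\tfrac12\log2$ and $f$ terms with sign. The point is that if the interval $(\gamma'-\ell,\gamma'+\ell)$ contained \emph{no} ordinate $\gg_n$, then by Prop.~\ref{monotonous} $\kappa$ would be non-integer throughout and, being monotone there (for $\gamma'$ large; the hypothesis \SP\ holds under RH by Prop.~\ref{RHimpliesP}), we would get $\int_{\gamma'-\ell}^{\gamma'+\ell}\kappa'(t)\,dt<1$, i.e. $\pi\int<\pi$.

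So the strategy is a contradiction argument: suppose $|\gamma_c-\gamma'|> c\,\delta^{1/2}$ for the constant $c=1.9$ to be pinned down; then the interval $I=(\gamma'-c\delta^{1/2},\gamma'+c\delta^{1/2})$ contains no ordinate $\gg_n$, hence $\int_I\pi\kappa'\le\pi$. On the other hand, bound $\int_I\pi\kappa'$ from below by keeping only the $\rho'$-term of the sum and the constant term, discarding the (nonnegative) contributions of the other zeros:
\[
\pi\ge\int_I\pi\kappa'(t)\,dt\ge 2\arctan\frac{c\delta^{1/2}}{\delta}+\tfrac12\log2\cdot 2c\delta^{1/2}+\int_I f(t)\,dt.
\]
Now $\arctan(c\delta^{1/2}/\delta)=\arctan(c/\delta^{1/2})\to\pi/2$ as $\delta\to 0$, so $2\arctan(c/\delta^{1/2})$ is close to $\pi$ for small $\delta$; using $\arctan x\ge \tfrac\pi2-\tfrac1x$ gives $2\arctan(c/\delta^{1/2})\ge \pi-2\delta^{1/2}/c$. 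The term $\int_I f$ is $O(\ell/t)$ from \eqref{cota} and is negligible for $\gamma'$ large; and $c\log2\cdot\delta^{1/2}>0$. Plugging in, $\pi\ge \pi-\tfrac{2}{c}\delta^{1/2}+c\log2\,\delta^{1/2}+(\text{small})$, i.e. $0\ge(c\log2-\tfrac2c)\delta^{1/2}+(\text{small})$; since $\beta'\le 3$ so $\delta\le 5/2$ is bounded, one needs $c\log2-\tfrac2c<0$ to \emph{not} immediately contradict — hmm, that is backwards. The correct reading: choosing $c=1.9$ we have $c\log2-2/c\approx 1.317-1.053>0$, which forces $0\ge(\text{positive})\cdot\delta^{1/2}$, impossible unless $\delta=0$. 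That is the desired contradiction, giving $|\gamma_c-\gamma'|\le 1.9\,\delta^{1/2}=1.9|\beta'-\tfrac12|^{1/2}$.

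The main obstacle is \textbf{making the error terms explicit and uniform down to small $\gamma'$}: the bound $f(t)=O(1/t)$ from \eqref{cota} and the ``$\kappa$ is monotone'' claim both come with the proviso $t>a_\kappa$ or $t$ large, so the clean inequality above is really only valid for $\gamma'$ beyond some threshold; for the finitely many zeros $\rho'$ with $\gamma'$ below that threshold one must either verify the inequality $|\gamma_c-\gamma'|\le 1.9|\beta'-\tfrac12|^{1/2}$ numerically from the tables (Table on p.~\pageref{default} and known data for $\zeta'$), or absorb them by slightly enlarging the asymptotic constant and checking $1.9$ still works. A secondary technical point is handling the case where $I$ might contain exactly the boundary point $\gg_n=\gamma_c$ itself or where $\kappa$ is not strictly monotone because $a_\kappa$ issues — but \SP\ (valid under RH) ensures $\kappa'\ge0$ for $t>a_\kappa$, and one takes $\ell$ small enough / $\gamma'$ large enough that $I\subset(a_\kappa,\infty)$, so monotonicity and $\int_I\kappa'<1$ whenever $I$ misses all $\gg_n$ both hold. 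Once the threshold bookkeeping is done, optimizing $c$ in $c\log 2>2/c$, i.e. $c>\sqrt{2/\log2}=\sqrt{2.885\ldots}\approx 1.698$, shows any $c>1.70$ works asymptotically, and $1.9$ leaves comfortable room for the low-lying zeros and the $f$-term.
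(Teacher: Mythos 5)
Your argument is correct and is essentially the paper's own proof run in contrapositive form: both rest on \eqref{E:kappaprime} with $A=\frac12\log 2$, discard the nonnegative contributions of all other zeros under the RH, integrate the remaining Lorentzian term to $2\arctan\bigl(a/(\beta'-\frac12)\bigr)$, and conclude that an increment of $\kappa$ exceeding $1$ over $[\gamma'-a,\gamma'+a]$ forces an integer value of $\kappa$ and hence (by Corollary \ref{integervalues}) a critical zero, with the same optimization $a=c\sqrt{\beta'-\frac12}$, $c>\sqrt{2/\log 2}\approx 1.70$. The threshold bookkeeping you flag as the main obstacle is resolved exactly as you suggest it could be: every complex zero of $\zeta'$ has $\gamma'\ge 23$ (the first lies at $\gamma'\approx 23.3$), so the explicit bound $|f(t)|<41/(40t)$ for $t>20$ applies uniformly and $c=1.9$ leaves enough room.
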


\begin{proof}
Assuming the RH, $\beta'>1/2$ so that 
by equation \eqref{E:kappaprime}  we will have
\begin{displaymath}
\kappa'(t)\ge\frac{1}{2\pi}\log 2+\frac{f(t)}{\pi}+\frac{1}{\pi}\frac{\beta'-\frac12}{(\beta'-\frac12)^2+(t-\gamma')^2}.
\end{displaymath}
We will find an $a>0$ so that 
\begin{displaymath}
\kappa(\gamma'+a)-\kappa(\gamma'-a)\ge \frac{a\log2}{\pi}+\frac{2}{\pi}\arctan\frac{a}{\beta'-1/2}+\frac{1}{\pi}\int_{\gamma'-a}^{\gamma'+a}f(t)\,dt>1.
\end{displaymath}
Then there will be a point
$\xi\in[\gamma'-a,\gamma'+a]$ such that $\kappa(\xi)\in\Z$.  Then by Corollary \ref{integervalues}
$\zeta(\frac12+i\xi)=0$,  so that the ordinate $\gamma_c$ of the nearest zero of $\zeta(s)$
will satisfy  $|\gamma_c-\gamma'|\le a$. 

By \eqref{cota}, for $t>20$ we have $|f(t)|<41/40t$ and therefore for $a/\gamma'<1/2$
\begin{displaymath}
\Bigl|\frac{1}{\pi}\int_{\gamma'-a}^{\gamma'+a}f(t)\,dt\Bigr|\le 
\frac{41}{40\pi}\log\frac{\gamma'+a}{\gamma'-a}\le \frac{41}{40\pi}\frac{8}{3}\frac{a}{\gamma'}\le  \frac{a}{\gamma'},
\end{displaymath}
because $\log\frac{1+x}{1-x}\le 8x/3$ for $|x|\le 1/2$.
 
Therefore we want to choose $a$ such that 
\begin{displaymath}
\kappa(\gamma'+a)-\kappa(\gamma'-a)\ge \frac{a\log2}{\pi}+\frac{2}{\pi}\arctan\frac{a}{\beta'-1/2}-\frac{a}{\gamma'}>1
\end{displaymath}
or
\begin{displaymath}
a\frac{\log2}{2}-\frac{\pi a}{2\gamma'}\ge\frac{\pi}{2}-\arctan\frac{a}{\beta'-1/2}=
\arctan\frac{\beta'-1/2}{a} .
\end{displaymath}
It suffices to take 
\begin{displaymath}
a\Bigl(\frac{\log2}{2}-\frac{\pi}{2\gamma'}\Bigr)\ge \frac{\beta'-1/2}{a}
\end{displaymath}
Since $\gamma'\ge23$ 
it suffices to take
\begin{displaymath}
a=1.9\sqrt{\beta'-\medio}\ge \Bigl(\frac{\log2}{2}-\frac{\pi}{2\gamma'}\Bigr)^{-1/2}
\sqrt{\beta'-\medio}.
\end{displaymath}
Since always $\beta'<3$ and $\gamma'>23$ this $a$ satisfies  $a/\gamma'<1/2$, 
as used above.
\end{proof}

\section{The functions $E(t)$ and $S(t)$.}\label{S:E-S}

In the theory of the zeta function we consider the function 
\[S(t)=\pi^{-1}\arg\zeta(\medio+it)\] where the argument is obtained by its
continuous variation along the straight lines joining $2$, $2+it$, $\frac12+it$ starting with 
the value $0$. If $t$ is the ordinate of a zero, $S(t)$ is taken equal to $S(t+0)$. 
This function satisfies (see Edwards \cite{E}*{p.~173})
\begin{equation}\label{SN}
S(t)=N(t)-1-\frac{1}{\pi}\vartheta(t).
\end{equation}
If we assume the RH and the simplicity of the zeros, we will have 
$N(t)=N_{00}(t)=\lfloor\kappa(t)\rfloor$ (see Proposition \ref{P:kn0}).

We introduce 
a real analytic  version of $S(t)$ that we will call $E(t)$
\begin{equation}
E(t):= \pi+2\vartheta(t)+\ph\zeta'(\medio+it).
\end{equation}
By \eqref{E:kappa1} this is equivalent to 
\begin{equation}\label{Ekappa}
E(t)=3\frac{\pi}{2}+\vartheta(t)-\pi\kappa(t)
\end{equation}
with $E(0)=2\pi$. 

If $\frac12+i\gg_n$ is a simple zero of $\zeta(s)$ we will have
$E'(\xi_n)=0$ by Proposition \ref{kappaprimeatzeros}. The converse is not true. 
For example at $t_0=39.587\,127\,340\dots $ the function $E(t)$ has a local 
minimum with $E(t_0)=0.151\,790\,437\dots$ It is also easy to show
that $E(t)-2\pi$ is a real analytic  odd function.

In fact $E'(t)=\vartheta'(t)-\pi\kappa'(t)$ so that the zeros of $E'(t)$ are just
the points where the graphs of $\frac{1}{\pi}\vartheta'(t)$ and $\kappa'(t)$ 
intersect (see Figure \ref{F:two ways}). By equation \eqref{eqkappaprime}  
for $Z'(t)^2+(Z(t)\vartheta'(t))^2\ne0$ we have
\begin{equation}
E'=\vartheta'-\pi\kappa'=Z\cdot\frac{Z\vartheta'^3-Z'\vartheta''+Z''\vartheta'}
{(Z')^2+(Z\vartheta')^2}.
\end{equation}

\begin{figure}[H]
  \includegraphics[width=\hsize]{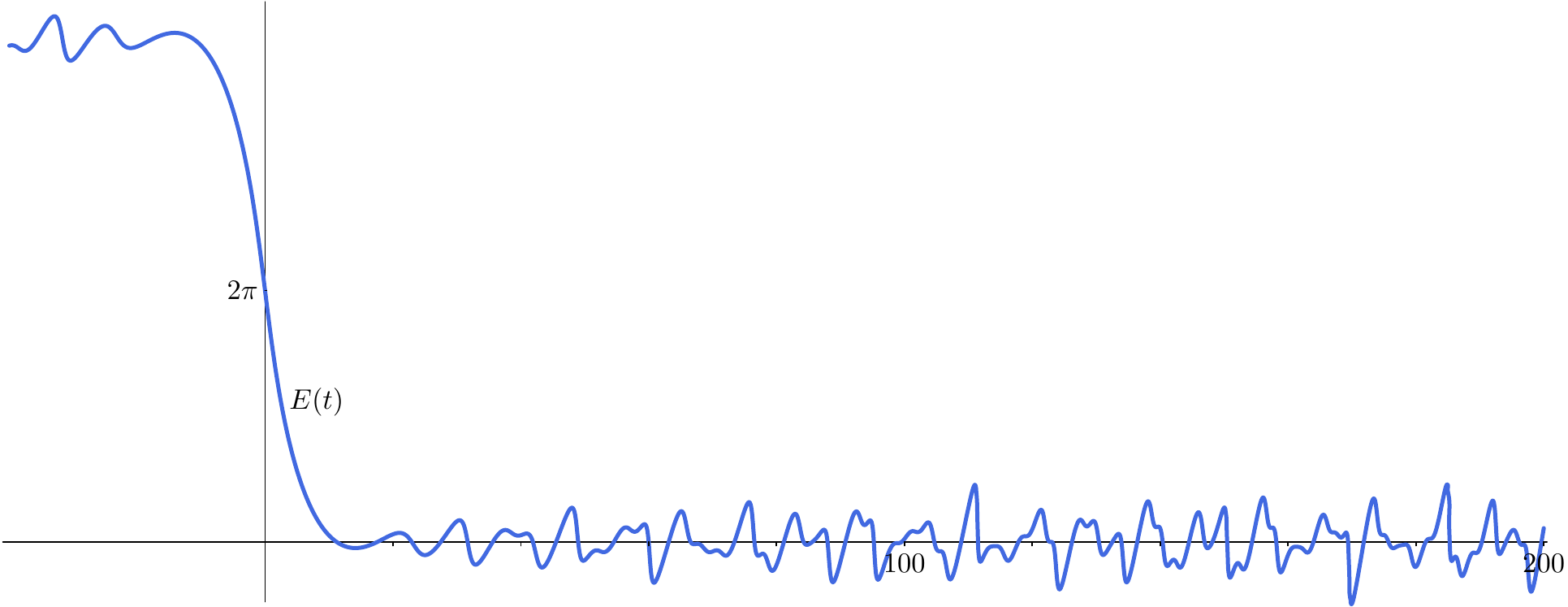}
  \caption{$E(t)$}
  \label{F:F44}
\end{figure}

For the next Proposition we need a measure of the possible failure of the RH.  
\begin{definition}
For any $t>0$ we define $\RH(t)$ by
\begin{equation}
\RH(t):= N(t)-N_{00}(t).
\end{equation}
\end{definition}

That is $\RH(t)$  is equal to the number of zeros $\beta+i\gamma$ of $\zeta(s)$ with 
$0<\gamma\le t$ and $\beta\ne\frac12$, plus the number of zeros $\beta'+i\gamma'$ of 
$\zeta'(s)$  with $\beta'=\frac12$ and $0<\gamma'\le t$ all of them counted with 
their multiplicities. By Proposition \ref{P:doblezeros}
these zeros of $\zeta'(s)$ will be multiple zeros of $\zeta(s)$ on the critical line. 
We have $\RH(t)=0$ if and only if the zeros $\beta+i\gamma$ of $\zeta(s)$ with 
$0<\gamma\le t$ are all on the critical line and are simple.

\begin{proposition}
We have 
\begin{equation}
-\medio+\RH(t)<S(t)+\fracpi E(t)\le \medio+\RH(t),\qquad   t>a_\vartheta.
\end{equation}
\end{proposition}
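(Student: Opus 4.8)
The plan is to reduce the two-sided inequality to the trivial bounds $x-1<\lfloor x\rfloor\le x$ for the floor function, after first rewriting $S(t)+\fracpi E(t)$ cleanly in terms of $\kappa(t)$. The only inputs needed are the two formulas already established, \eqref{SN} and \eqref{Ekappa}, the definition of $\RH(t)$, and Proposition \ref{P:kn0}; in particular the argument is unconditional.

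First I would add the two representations at hand. From \eqref{SN} we have $S(t)=N(t)-1-\fracpi\vartheta(t)$, and dividing \eqref{Ekappa} by $\pi$ gives $\fracpi E(t)=\threemedio+\fracpi\vartheta(t)-\kappa(t)$. Adding these, the terms $\pm\fracpi\vartheta(t)$ cancel, leaving the identity
\[
S(t)+\fracpi E(t)=N(t)+\medio-\kappa(t).
\]
Here one should note that at an ordinate of a zero of $\zeta(s)$ the function $S$ is normalized to be right–continuous and $N(t)$ counts zeros with ordinate $\le t$, in agreement with $N_{00}(t)$, so this is an identity of functions on $(0,\infty)$ with no exceptional points. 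Substituting $N(t)=\RH(t)+N_{00}(t)$, which is just the definition of $\RH(t)$, turns the identity into
\[
S(t)+\fracpi E(t)=\RH(t)+N_{00}(t)+\medio-\kappa(t).
\]

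Subtracting $\RH(t)$, the claim is thus equivalent to $-\medio<N_{00}(t)+\medio-\kappa(t)\le\medio$, i.e. to $\kappa(t)-1<N_{00}(t)\le\kappa(t)$. At this point I invoke Proposition \ref{P:kn0}, which for $t>a_\vartheta$ gives $N_{00}(t)=\lfloor\kappa(t)\rfloor$; since $x-1<\lfloor x\rfloor\le x$ for every real $x$, both inequalities follow immediately, completing the proof.

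There is no real obstacle here; the one thing to watch is the matching of strict versus non-strict inequalities. The left bound is strict because $\lfloor x\rfloor>x-1$ holds for all real $x$, including when $\kappa(t)$ happens to be an integer; by Corollary \ref{integervalues} the latter occurs, in the range $t>a_\vartheta$, exactly at the points $t=\xi_n$ with $n\ge1$, and precisely there the right bound $N_{00}(t)\le\kappa(t)$ becomes an equality, while elsewhere it too is strict. Thus the statement is sharp, and the restriction $t>a_\vartheta$ is exactly what is needed for Proposition \ref{P:kn0} to be applicable.
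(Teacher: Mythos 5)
Your proof is correct and follows essentially the same route as the paper: both combine \eqref{SN} and \eqref{Ekappa} to get $S(t)+\fracpi E(t)=N(t)+\medio-\kappa(t)$, substitute the definition of $\RH(t)$, and conclude from $N_{00}(t)=\lfloor\kappa(t)\rfloor$ together with the elementary bounds on the fractional part. Your closing remark on where each inequality is sharp is a small bonus not present in the paper, but the argument is the same.
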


\begin{proof}
By \eqref{SN} and \eqref{Ekappa} we have
\begin{align*}
S(t)+\fracpi E(t)&=N(t)-1-\fracpi\vartheta(t)+\fracpi\left(\terpi+\vartheta(t)-\pi\kappa(t)\right)\\
&=N(t)+\medio-\kappa(t)=\RH(t)+\medio-\kappa(t)+N_{00}(t)
\end{align*}
so that by \eqref{Nkappa} for $t>a_\vartheta$ we have
\begin{equation}\label{SpluspiE}
S(t)+\fracpi E(t)=\RH(t)-\left(\kappa(t)-\lfloor\kappa(t)\rfloor-\medio\right),\qquad  t>a_\vartheta
\end{equation}
from which the result follows.
\end{proof}

\begin{corollary}
Assuming the RH and the simplicity of the zeros we will have
\begin{equation}
-\medio<S(t)+\fracpi E(t)\le \medio,\qquad   t>a_\vartheta.
\end{equation}
\end{corollary}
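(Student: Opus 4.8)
The plan is to deduce the corollary directly from the preceding proposition, specializing the general two-sided inequality
\[
-\medio+\RH(t)<S(t)+\fracpi E(t)\le \medio+\RH(t),\qquad t>a_\vartheta,
\]
to the situation in which the Riemann hypothesis holds and all zeros of $\zeta(s)$ are simple. Under these hypotheses the measure $\RH(t)$ of the failure of RH and simplicity vanishes identically: by definition $\RH(t)=N(t)-N_{00}(t)$ counts (with multiplicity) the zeros $\beta+i\gamma$ of $\zeta(s)$ with $0<\gamma\le t$ and $\beta\ne\frac12$, together with the zeros $\beta'+i\gamma'$ of $\zeta'(s)$ with $\beta'=\frac12$ and $0<\gamma'\le t$ — and by Proposition \ref{P:doblezeros} the latter are precisely the multiple zeros of $\zeta(s)$ on the critical line. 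Assuming RH, there are no zeros with $\beta\ne\frac12$; assuming simplicity, there are no multiple zeros on the line. Hence both contributions are empty and $\RH(t)=0$ for every $t>0$.

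Substituting $\RH(t)=0$ into the displayed inequality immediately yields
\[
-\medio<S(t)+\fracpi E(t)\le \medio,\qquad t>a_\vartheta,
\]
which is the assertion of the corollary. Equivalently, one can invoke formula \eqref{SpluspiE}, namely $S(t)+\fracpi E(t)=\RH(t)-\bigl(\kappa(t)-\lfloor\kappa(t)\rfloor-\medio\bigr)$ for $t>a_\vartheta$: with $\RH(t)=0$ this reduces to $S(t)+\fracpi E(t)=-\bigl(\kappa(t)-\lfloor\kappa(t)\rfloor-\medio\bigr)$, and since $0\le \kappa(t)-\lfloor\kappa(t)\rfloor<1$ always, the quantity $\kappa(t)-\lfloor\kappa(t)\rfloor-\medio$ lies in $[-\medio,\medio)$, giving exactly the stated bounds.

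There is essentially no obstacle here; the only point deserving care is the clean verification that $\RH(t)\equiv 0$ under the combined hypotheses, which rests on correctly unwinding the definition of $\RH(t)$ and applying Proposition \ref{P:doblezeros} to identify the second type of contributing zero. Everything else is a direct substitution into results already established in the excerpt.
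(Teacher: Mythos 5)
Your proof is correct and follows essentially the same route as the paper, which simply observes that the combined hypotheses are equivalent to $\RH(t)=0$ and then specializes the preceding proposition. Your additional unwinding of the definition of $\RH(t)$ via Proposition \ref{P:doblezeros} and the alternative derivation from \eqref{SpluspiE} are both consistent with the paper's (very terse) argument.
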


Indeed, the hypotheses are equivalent to $\RH(t)=0$. 
By the well known Fourier series of $\widetilde{B}_1(x)=x-\lfloor x\rfloor-\frac12$ we get from \eqref{SpluspiE},
under the assumptions of the Corollary
\begin{equation}\label{SpluspiE2}
S(t)+\fracpi E(t)=2\sum_{n=1}^\infty \frac{\sin(2\pi n\kappa(t))}{2\pi n},\qquad  t>a_\vartheta.
\end{equation}

\section{Extension to other $L$-functions.}
Most of the formulas and functions defined in this paper for $\zeta(s)$ can be 
generalized to other functions, including the Selberg class. The main thing we need is
a functional equation. So let's assume that we have a Dirichlet series
\begin{displaymath}
f(s)=\sum_{n=1}^\infty \frac{a_n}{n^s}
\end{displaymath}
\begin{figure}[H]
  \includegraphics[width=\hsize]{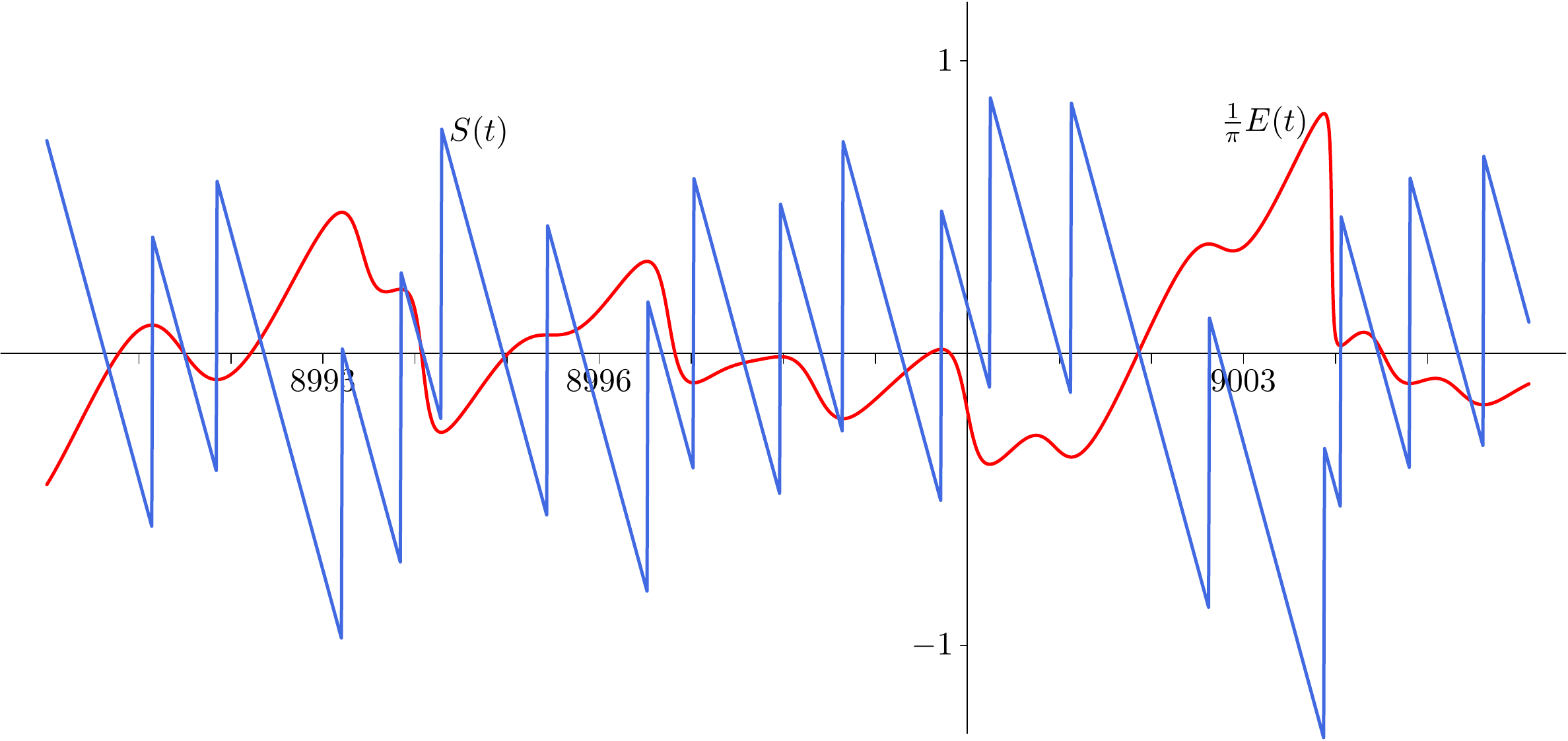}
  \includegraphics[width=\hsize]{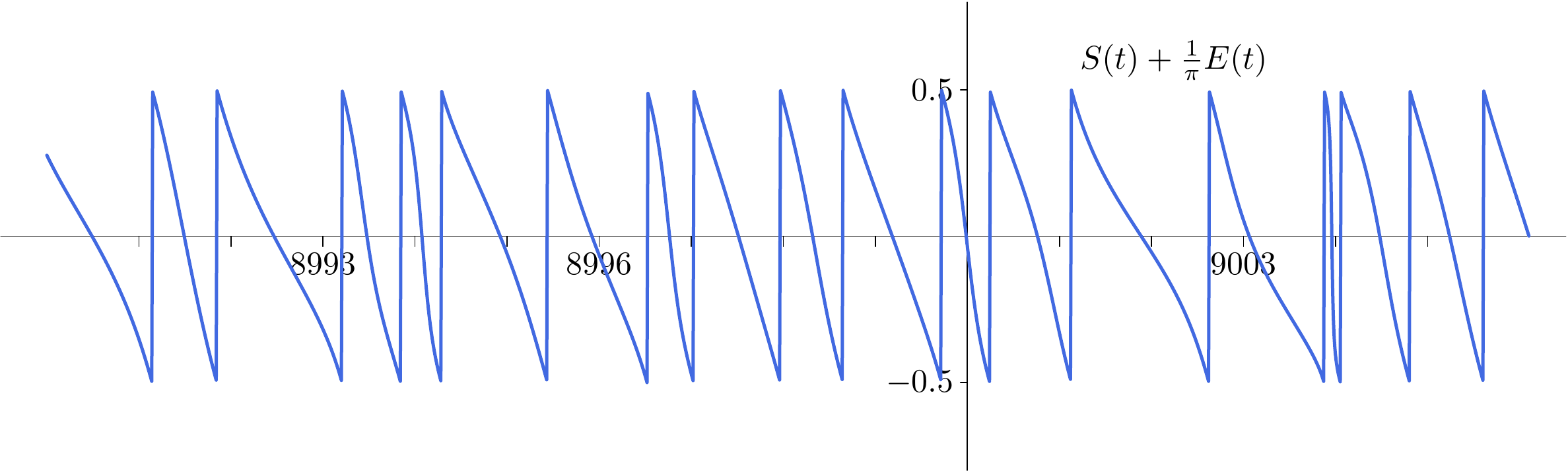}
  \caption{Plots of $S(t)$, $\frac{1}{\pi}E(t)$ and  $S(t)+\frac{1}{\pi}E(t)$ for $t$ in $(8990,9006)$.}
  \label{F:SversusE}
\end{figure}

\noindent which can be extended as a meromorphic function to the plane $\C$, in such a way 
that there exist
numbers $Q>0$, $\alpha_n>0$ and $r_n\in\C$ with $\Re(r_n)\ge0$ such that
\begin{displaymath}
\Phi(s):=Q^sf(s)\prod_{n=1}^d \Gamma(\alpha_n s+r_n)
\quad\text{satisfies}\quad  \Phi(s)=w\overline{\Phi(1-\overline{s})}
\end{displaymath}
where $w$ is a complex number of modulus $|w|=1$.
In this way all Dirichlet series for a primitive character, and the Dirichlet series $f(s)$
considered by Titchmarsh \cite{T}*{Section 10.25}, which has no Euler product, and 
does not satisfy an RH will be included. 

Putting $s=\medio+it$ the functional equation leads to 
\begin{displaymath}
\frac{f(\medio+it)}{\overline{f(\frac12+it)}}=
wQ^{-2it}\prod_{n=1}^d\frac{\overline{\Gamma(\alpha_n(\frac12+it)+r_n)}}
{\Gamma(\alpha_n(\frac12+it)+r_n)}
\end{displaymath}

Therefore, if we define
\begin{equation}
\vartheta(f,t):=-\frac{\arg w}{2}+t\log Q+\sum_{n=1}^d\ph 
\Gamma(\alpha_n(\medio+it)+r_n)
\end{equation}
this will be a real analytic function and  $\ph f(\frac12+it)=-\vartheta(f,t)$
so that 
\begin{equation}
f(\medio+it)=e^{-i\vartheta(f,t)} Z(f,t)
\end{equation}
where $Z(f,t)$ is a real valued  real analytic function of the real variable $t$.
It is not  difficult to define functions $\kappa(f,t)$, $E(f,t)$, and so on.

\begin{figure}[H]
  \includegraphics[width=\hsize]{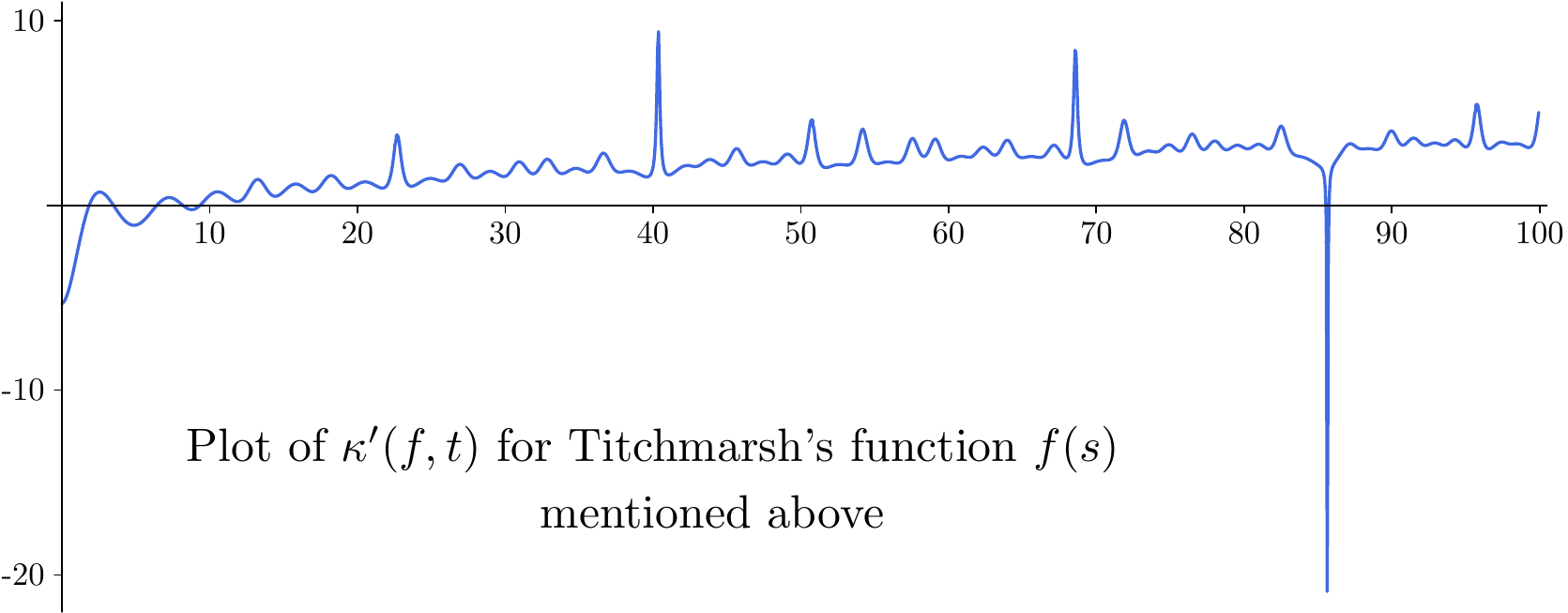}
  \caption{This  Dirichlet series has a zero  at the point 
  $\rho\approx0.80851 718  + i\; 85.69934 848$.}
  \label{F:titchmarshfunction}
\end{figure}

\noindent\textsc{Acknowledgement: } The authors would like to thank
Patrick R. Gardner ( Kennewick, Washington, USA )  for his linguistic 
assistance in preparing this note,
and for his interest in the subject.


\begin{bibdiv}
\begin{biblist}

\bib{B}{article}{
   author={Berndt, Bruce C.},
   title={The number of zeros for $\zeta ^{(k)}\,(s)$},
   journal={J. London Math. Soc. (2)},
   volume={2},
   date={1970},
   pages={577--580},
   issn={0024-6107},
   review={\MR{0266874 (42 \#1776)}},
}

\bib{BCY}{article}{
   author={Bui, H. M.},
   author={Conrey, Brian},
   author={Young, Matthew P.},
   title={More than $41\%$ of the zeros of the zeta function are on the
   critical line},
   journal={Acta Arith.},
   volume={150},
   date={2011},
   number={1},
   pages={35--64},
   issn={0065-1036},
   review={\MR{2825573 (2012g:11156)}},
   doi={10.4064/aa150-1-3},
}

\bib{BH}{article}{
   author={Bui, H. M.},
   author={Heath-Brown, D. R.},
   title={On simple zeros of the Riemann zeta-function},
   date={2013},
   note={At the time of writing this was available online at \url{http://arxiv.org/abs/1302.5018}.},
}

\bib{CG}{article}{
   author={Conrey, J. B.},
   author={Ghosh, A.},
   title={Zeros of derivatives of the Riemann zeta-function near the
   critical line},
   conference={
      title={Analytic number theory},
      address={Allerton Park, IL},
      date={1989},
   },
   book={
      series={Progr. Math.},
      volume={85},
      publisher={Birkh\"auser Boston},
      place={Boston, MA},
   },
   date={1990},
   pages={95--110},
   review={\MR{1084176 (92i:11093)}},
}

\bib{E}{book}{
   author={Edwards, H. M.},
   title={Riemann's zeta function},
   note={Reprint of the 1974 original [Academic Press, New York;  MR0466039
   (57 \#5922)]},
   publisher={Dover Publications Inc.},
   place={Mineola, NY},
   date={2001},
   pages={xiv+315},
   isbn={0-486-41740-9},
   review={\MR{1854455 (2002g:11129)}},
}

\bib{G}{thesis}{
	author={Gabcke, Wolfgang},
	title={Neue Herleitung und Explizite Restabschätzung der Riemann-Siegel-Formel},
	date={1979},
	type={Ph.D.\ thesis},
	organization={Georg-August-Universit\"at zu G\"ottingen},
}

\bib{GY}{article}{
   author={Garaev, M. Z.},
   author={Y{\i}ld{\i}r{\i}m, C. Y.},
   title={On small distances between ordinates of zeros of $\zeta(s)$ and
   $\zeta'(s)$},
   journal={Int. Math. Res. Not. IMRN},
   date={2007},
   number={21},
   pages={Art. ID rnm091, 14},
   issn={1073-7928},
   review={\MR{2352220 (2008j:11109)}},
   doi={10.1093/imrn/rnm091},
}

\bib{I}{book}{
   author={Ivi\'c, Aleksandar},
   title={The Theory of Hardy's Z-function},
   series={Cambridge Tracts in Mathematics},
   volume={196},
   publisher={Cambridge University Press},
   place={Cambridge},
   date={2013},
   pages={xivii+245},
   isbn={978-1-107-02883-8},
   review={pending},
   doi={10.1017/CBO9781139236973},
}

\bib{Lm}{article}{
   author={Lehman, R. Sherman},
   title={On the distribution of zeros of the Riemann zeta-function},
   journal={Proc. London Math. Soc. (3)},
   volume={20},
   date={1970},
   pages={303--320},
   issn={0024-6115},
   review={\MR{0258768 (41 \#3414)}},
}

\bib{L}{article}{
   author={Levinson, Norman},
   title={More than one third of zeros of Riemann's zeta-function are on
   $\sigma =1/2$},
   journal={Advances in Math.},
   volume={13},
   date={1974},
   pages={383--436},
   issn={0001-8708},
   review={\MR{0564081 (58 \#27837)}},
}

\bib{LM}{article}{
   author={Levinson, Norman},
   author={Montgomery, Hugh L.},
   title={Zeros of the derivatives of the Riemann zetafunction},
   journal={Acta Math.},
   volume={133},
   date={1974},
   pages={49--65},
   issn={0001-5962},
   review={\MR{0417074 (54 \#5135)}},
}

%

\bib{So}{article}{
   author={Soundararajan, K.},
   title={The horizontal distribution of zeros of $\zeta'(s)$},
   journal={Duke Math. J.},
   volume={91},
   date={1998},
   number={1},
   pages={33--59},
   issn={0012-7094},
   review={\MR{1487979 (98k:11119)}},
   doi={10.1215/S0012-7094-98-09104-9},
}

\bib{S}{article}{
   author={Speiser, Andreas},
   title={Geometrisches zur Riemannschen Zetafunktion},
   language={German},
   journal={Math. Ann.},
   volume={110},
   date={1935},
   number={1},
   pages={514--521},
   issn={0025-5831},
   review={\MR{1512953}},
   doi={10.1007/BF01448042},
}

\bib{T}{book}{
   author={Titchmarsh, E. C.},
   title={The theory of the Riemann zeta-function},
   edition={2},
   note={Edited and with a preface by D. R. Heath-Brown},
   publisher={The Clarendon Press Oxford University Press},
   place={New York},
   date={1986},
   pages={x+412},
   isbn={0-19-853369-1},
   review={\MR{882550 (88c:11049)}},
}

\bib{Z}{article}{
   author={Zhang, Yitang},
   title={On the zeros of $\zeta'(s)$ near the critical line},
   journal={Duke Math. J.},
   volume={110},
   date={2001},
   number={3},
   pages={555--572},
   issn={0012-7094},
   review={\MR{1869116 (2003d:11130)}},
   doi={10.1215/S0012-7094-01-11034-X},
}

\end{biblist}
\end{bibdiv}

\end{document}